\documentclass[letterpaper,11pt]{amsart}

\usepackage{etex}
\usepackage{ifdraft}

\usepackage[utf8]{inputenc}
\usepackage[T1]{fontenc}

\usepackage{lmodern}
\usepackage{fourier}

\usepackage{amssymb, amsfonts}

\usepackage{mathrsfs} 
\usepackage{dsfont}
\usepackage[usenames,dvipsnames]{xcolor}
\usepackage{xcolor}
\usepackage{url}

\usepackage{stmaryrd}
\usepackage{mathrsfs}

\usepackage[english]{babel}

\usepackage[margin=7em]{geometry}
\usepackage{needspace}

\usepackage{amsmath}

\usepackage[textsize=tiny,colorinlistoftodos]{todonotes}

\usepackage{booktabs}
\usepackage[inline]{enumitem}
\usepackage{float}

\usepackage{tikz}
\usepackage{tikz-cd}
\usetikzlibrary{matrix,arrows,decorations.pathmorphing}

\usepackage{mathtools}

\usepackage{hyperref}
\hypersetup{colorlinks=true,citecolor=blue!70!black,linkcolor=red!60!black,linktocpage=true}
\usepackage[capitalise,nameinlink]{cleveref}
\crefname{projection}{Projection}{Projections}
\crefname{exactsequence}{Exact Sequence}{Exact Sequences}
\crefname{map}{Map}{Maps}

\numberwithin{equation}{section}
\newtheorem{thm}[equation]{Theorem} 
\crefname{thm}{Theorem}{Theorems}
\newtheorem{theorem}[equation]{Theorem} 
\newtheorem{prop}[equation]{Proposition}
\newtheorem{proposition}[equation]{Proposition}
\crefname{prop}{Proposition}{Propositions}
\newtheorem{lemma}[equation]{Lemma} 
\newtheorem{cor}[equation]{Corollary}
\crefname{cor}{Corollary}{Corollaries}

\crefname{corollary}{Corollary}{Corollaries}
\newtheorem{example}[equation]{Example}
\newtheorem{remark}[equation]{Remark}
\newtheorem{definition}[equation]{Definition}

\let\realsubsection\subsection
\def\subsection{\setcounter{subsection}{\arabic{equation}}
   \refstepcounter{equation}%
   \realsubsection
}
\crefname{subsection}{Subsection}{Subsections}

\newenvironment{enumerateroman}{
\begin{enumerate}[label=(\roman*), leftmargin=0pt,labelindent=2em,itemindent=!]
}{
\end{enumerate}
}

\newenvironment{enumeratearabic*}{
\begin{enumerate*}[label=(\arabic*)] 
}{
\end{enumerate*}
}

\newenvironment{enumerateroman*}{
\begin{enumerate*}[label=(\roman*)] 
}{
\end{enumerate*}
}

\newcommand{\nbd}{\nobreakdash-\hspace{0pt}}

\newcommand{\fref}[2]{\hyperref[#2]{#1~\ref*{#2}}}

\makeatletter
\newcommand{\writelabel}[1]{#1\def\@currentlabel{#1}}
\makeatother

\newcommand{\minwidthmathbox}[2]{%
  \mathmakebox[{\ifdim#1<\width\width\else#1\fi}]{#2}%
}

\newcommand{\bbL}{\mathbb{L}}

\newcommand{\cC}{\mathcal{C}}

\newcommand{\cO}{\mathcal{O}}

\newcommand{\frakg}{\mathfrak{g}}

\newcommand{\frakk}{\mathfrak{k}}

\newcommand{\frakm}{\mathfrak{m}}

\newcommand{\rml}{\mathrm{l}}

\newcommand{\rmr}{\mathrm{r}}

\newcommand{\rmC}{\mathrm{C}}
\newcommand{\rmD}{\mathrm{D}}

\newcommand{\rmJ}{\mathrm{J}}

\newcommand{\rmL}{\mathrm{L}}

\newcommand{\rmO}{\mathrm{O}}

\newcommand{\rmR}{\mathrm{R}}

\newcommand{\td}{\tilde}

\newcommand{\ov}{\overline}

\newcommand{\mrelspace}[1]{\mathrel{\mspace{#1}}}

\NewCommandCopy{\rightarroworig}{\rightarrow}
\renewcommand{\rightarrow}
{\protect\relbar\mrelspace{-9.7mu}\rightarroworig}

\NewCommandCopy{\leftarroworig}{\leftarrow}
\renewcommand{\leftarrow}
{\protect\leftarroworig\mrelspace{-9.7mu}\relbar}

\renewcommand{\longrightarrow}
{\protect\relbar\mrelspace{-3.2mu}\relbar\mrelspace{-9.5mu}\rightarroworig}
\newcommand{\longhookrightarrow}
{\protect\lhook\mrelspace{-3.1mu}\relbar\mrelspace{-3.2mu}\relbar\mrelspace{-11.7mu}\rightarroworig}
\newcommand{\longtwoheadrightarrow}
{\protect\relbar\mrelspace{-3mu}\rightarroworig\mrelspace{-15mu}\rightarroworig}

\renewcommand{\Im}{\mathrm{Im}}

\NewCommandCopy{\slashorig}{\slash}
\renewcommand{\slash}{\mathbin{\slashorig}}

\newenvironment{smatrix}{\begin{smallmatrix}}{\end{smallmatrix}}

\newcommand{\ZZ}{\mathbb{Z}}
\newcommand{\QQ}{\mathbb{Q}}
\newcommand{\RR}{\mathbb{R}}
\newcommand{\CC}{\mathbb{C}}

\newcommand{\SL}[1]{\mathrm{SL}_{#1}}

\renewcommand{\det}{\mathrm{det}}

\newcommand{\HS}{\mathbb{H}}

\newcommand{\discolorlinks}[1]{{\hypersetup{hidelinks}#1}}

\newcommand{\llb}{[\hspace{-.23ex}[}
\newcommand{\rrb}{]\hspace{-.23ex}]}
\newcommand{\llp}{(\hspace{-.23ex}(}
\newcommand{\rrp}{)\hspace{-.23ex})}
\newcommand{\ixt}{{\mathbf t}}

\newcommand{\D}{\mathcal{D}}

\newcommand{\ot}{\otimes}

\newcommand{\HJ}{{\mathbb H}_{\rmJ}}
\newcommand{\CHJ}{C^{\infty}(\HJ)}
\newcommand{\OHJ}{\cO(\HJ)}
\newcommand{\holos}{\cO(\HJ)}
\newcommand{\DOT}{\bullet}

\newcommand{\del}{\partial}
\newcommand{\ddel}{ \boldsymbol{\del} }
\newcommand{\delz}{ \boldsymbol{\del}_{\! z}}

\newcommand{\PDO}{\ensuremath{\Psi\rmD\rmO}}
\newcommand{\po}{\PDO\big(\ddel, \CHJ \big)}
\newcommand{\poh}{\PDO\big(\ddel, \OHJ \big)}

\newcommand{\ml}{\ensuremath{m_\rml}}
\newcommand{\mr}{\ensuremath{m_\rmr}}
\newcommand{\mlr}{\ensuremath{{(\ml, \mr)}}}
\newcommand{\Lmod}{{\rmL}^{}}
\newcommand{\Lell}{{\rmL}^{J}}
\newcommand{\Rmod}{{\rmR}^{}}
\newcommand{\Rell}{{\rmR}^{J}}
\newcommand{\Cmod}{{\rmC}^{}}
\newcommand{\Cell}{{\rmC}^{J}}

\newcommand{\GJ}{\ensuremath{{G}}}

\newcommand{\frakkJ}{\ensuremath{\frakk}}

\newcommand{\tx}[1]{\text{#1}}

\newcommand{\frakmJ}{\ensuremath{\frakm}}
\newcommand{\frakgJ}{\ensuremath{\frakg}}

\newcommand{\tree}{\Upsilon}

\newcommand{\Gr}{\ensuremath{\mathrm{gr}}\,}

\def\smath#1{\text{\scalebox{.9}{$#1$}}}
\def\ssmath#1{\text{\scalebox{.75}{$#1$}}}
\def\sfrac#1#2{\smath{\frac{#1}{#2}}}
\def\sbinom#1#2{\ssmath{\binom{#1}{#2}}}
\begin{document}
\nocite{*}

\title[Pseudodifferential Jacobi forms and
Geometric Rankin-Cohen Brackets]{Pseudodifferential
Jacobi forms and\\Geometric Rankin-Cohen Brackets}
\author{Martin Raum}
\address{Department of Mathematical Sciences,
Chalmers University of Technology and University of Gothenburg,
SE-412 96 Gothenburg, Sweden}
\email{martin@raum-brothers.eu}
\urladdr{https://martin.raum-brothers.eu}
\author{Anne V.\ Shepler}
\address{Department of Mathematics, University of North Texas,
Denton, Texas 76203, USA}
\email{ashepler@unt.edu}
\urladdr{https://sites.math.unt.edu/~ashepler/}
\thanks{The first author was supported by Vetenskapsr\aa det Grant~2023-04217. 
The second author was partially supported Simons Foundation 
grant 949953.}

\subjclass{
Primary 11F50, 
11F60; Secondary 
16W70, 32W25 
}
\keywords{Jacobi forms, automorphic forms,
Rankin-Cohen brackets}

\begin{abstract}
Cohen, Manin, and Zagier recovered the Rankin-Cohen bracket for modular forms
from an action of the modular group on pseudodifferential operators
whose coefficients are holomorphic functions on the Poincar\'e upper half plane.
We investigate pseudodifferential operators on the Jacobi upper half space
with respect 
to the elliptic variable
instead of the modular variable
typically considered.
We introduce a family of actions of the Jacobi group and show that
a space of invariant pseudodifferential operators is isomorphic to the space
of Jacobi forms by producing an equivariant map.  Our construction arises from 
the explicit action of a Casimir operator for the complexified Lie algebra of the real Jacobi Lie group.
As an application, 
we identify new 
families of Rankin-Cohen brackets
with geometric origin
indexed by a complex parameter.
In particular, we isolate 
a
subvariety of lines of Rankin-Cohen brackets in each degree
of expected dimension $1$
reflecting the geometry
of the Jacobi upper half space.
\end{abstract}

\date{June 29, 2026.}

\maketitle

\setcounter{tocdepth}{1}
\tableofcontents


\section{Introduction}

In 1997, Cohen, Manin, and Zagier~\cite{CMZ} 
connected
Rankin-Cohen brackets to the noncommutative
multiplication of 
certain pseudodifferential operators.
Various authors have considered pseudodifferential operators in special cases with the
intent of connecting elliptic modular
forms to the deformation quantization
of Poisson manifolds
advertised by Kontsevich~\cite{Kontsevich}.
Consider
Bieliavsky, Tang, and Yao~\cite{BieliavskyTangYao},
Dumas and Royer~\cite{DumasRoyer},
and Ovsienko~\cite{Ovsienko},
for example.
One asks if this approach 
may yield
canonical families
of Rankin-Cohen brackets
reflecting inherent geometry
for other modular and automorphic forms.

To address this question, we turn to an approach based on equivariant operators on graded noncommutative algebras advertised by Beilinson to Cohen, Manin, and Zagier.
Specifically, Beilinson suggested using a Casimir operator to link pseudodifferential operators to modular forms, see~\cite[p.~23]{CMZ}.
This idea previously has not 
been pursued with rigor in the literature.
In particular, we identify a non-vanishing condition which reflects the reducibility of certain Harish-Chandra modules.
This condition is required to obtain an equivariant direct sum decomposition of the space of pseudodifferential operators. In the case of elliptic modular forms, this may 
be derived via formal combinatorial arguments,
see~\cite{CMZ}.
In cases where this vanishing condition fails, indecomposable constituents akin to the quasi-modular form~$E_2$ can appear, explaining in part the focus of Dumas and Royer~\cite{DumasRoyer}
on quasi-modular forms as opposed to modular forms.
The resulting decomposition result applies to other settings beyond elliptic modular forms in which formal combinatorial proofs seem prohibitively intricate.
We examine in detail the case of Jacobi forms for its historical prominence, importance in combinatorial number theory, and a subtle non-uniqueness in its theory of Rankin-Cohen brackets.
While we focus on 
Jacobi forms, our theory is conducive to studying pseudodifferential operators or Taylor expansions (see Choie and Lee~\cite{ChoieLee,ChoieLeeBook}) in a general setting.
This approach should allow for 
an extension of results here  to weakly holomorphic forms, quasi-modular forms, and forms with singularities at torsion points, for example.

A natural question arises
for Jacobi forms:
What notion
of Rankin-Cohen bracket
reflects key properties?
For elliptic
modular forms, 
the only ambiguity in defining
Rankin-Cohen brackets arises from
rescaling.
In contrast, for Jacobi forms,
B\"ocherer~\cite{Boecherer} 
showed that the space of
(all possible)
$\nu$-th Rankin-Cohen brackets
grows 
in dimension as $\nu$ grows
(see \cref{Rankin-CohenBracketConstruction}).
Choie~\cite{Choie}
described a $1$-dimensional
subspace for each $\nu$
given by
polynomials in the heat operator
(see~\cite{Boecherer}).
Choie and Eholzer~\cite{ChoieEholzer}
gave a basis
for the space of all Rankin-Cohen brackets
by further leveraging the heat operator,
which via the theta decomposition reflects the action on elliptic modular forms.
Nevertheless, it is not immediately clear
how to define 
privileged families of brackets
reflecting 
intrinsic
traits of Jacobi forms.

Choie, Dumas, Martin, and Royer  
\cite{ChoieDumasMartinRoyer2017,ChoieDumasMartinRoyer} 
found families of Rankin-Cohen brackets for {\em weak}
Jacobi forms
using the theory of Connes and
Moscovici~\cite{ConnesMoscovici}, which
leverages Hopf symmetries.
Weak Jacobi forms
lend themselves to an extension
of methods used for elliptic modular forms
via their Taylor expansions.
Note that Dumas and Martin
\cite{DumasMartin}
considered pseudodifferential operators
on $\mathbb{H}$ for examining Rankin-Cohen brackets
of modular forms using
Jacobi-like forms, 
which exhibit invariance only under
$\text{SL}_2(\ZZ)$ as opposed to the full
Jacobi group.
Also see Choie and Lee~\cite{ChoieLee}
and~\cite{ChoieLeeBook}.

We define here families of Rankin-Cohen brackets for Jacobi forms respecting the geometry of the Jacobi upper half space
$\HJ$.
We leverage the noncommutative
multiplication of pseudodifferential
operators 
to identify $1$-complex-parameter's
worth of families of Rankin-Cohen brackets
of geometric origin.
This gives in each degree $\nu$
a
subvariety of lines
of expected dimension $1$
in the space of
all possible Rankin-Cohen brackets
as described by B\"ocherer~\cite{Boecherer}.
Our brackets also capture the inherent growth condition of Jacobi forms.
The difference between weak and non-weak Jacobi forms is reflected 
in the support of the Fourier expansion.
Methods from elliptic modular forms
prove fruitful in the weak case, 
but for some applications in combinatorial
number theory,
this growth condition is crucial.
Our brackets may serve as a useful
tool in such settings.

\subsection{Cohen-Manin-Zagier}
Cohen, Manin, and Zagier~\cite{CMZ} 
considered the natural noncommutative multiplication
on a space of pseudodifferential operators 
induced from the Leibniz rule,
see also Zagier~\cite{Indian}.
Specifically, they considered the space
$$\Psi\rm{DO}(\del_{\tau}, C^{\infty}(\HS))
\qquad\text{ 
of pseudodifferential operators in 
$\del_\tau:={\del}/{\del \tau}$}\, 
$$
whose coefficients are smooth functions 
$\HS\rightarrow\CC$ on the
Poincar\'e upper half plane
$\HS$ in the variable $\tau$.
Their work hinged on
an {\em equivariant mapping}
\begin{equation}
\label{CMZ-map}
C^{\infty}(\HS)
\ 
\xleftrightarrow{\ \ \SL{2}(\RR)\text{-equivariant }\ \ }\ 
\PDO(\del_{\tau}, C^{\infty}(\HS))
\, .
\end{equation}
They carried the noncommutative multiplication
of pseudodifferential operators
over to $C^{\infty}(\HS)$ 
to define two parameters' worth of 
(associative) noncommutative multiplications
on $C^{\infty}(\HS)$ and thus on modular forms. They then showed how
to recover the Rankin-Cohen bracket for modular forms
from such multiplications
and gave combinatorial conversion formulas
of independent interest.
Their work describes the Rankin-Cohen bracket
as a natural bracket arising ultimately
from the Leibniz rule.

\subsection{Equivariant algebra}

For other automorphic forms,
analogs of the equivariant
map
of (\ref{CMZ-map}) prove challenging
to identify.
We establish an elementary
algebraic result
to find such equivariant mappings
in terms of the graded
module $\Gr M$ associated to any filtered module $M$.
We use a subscript $i$ to
indicate the $i$-th filtered
or graded component
and work over a field $\mathbb{F}$.

\vspace{1ex}

\noindent
\discolorlinks{ \bf 
\cref{cor:algebraic-Gsplitting}}
{\em
Suppose $M$ is a $\ZZ$-filtered
$\mathbb{F}[G][x]$-module
for a group $G$
with filtration complete
over $\mathbb{F}[x]$.
Say $x$ acts by a scalar on each graded component
of $\Gr(M)$ with mutually distinct
eigenvalues.
For each $i$ in $\ZZ$, the short
exact sequence of $\ \mathbb{F}[G][x]$-modules splits
via a unique $G$-equivariant map
$\tree$:
\begin{equation*}
\begin{tikzcd}[column sep=6ex]
0\arrow{r}
& M_{i-1}
\arrow{r}
& \ \ M_i
\arrow{r}{}
&
(\Gr M)_i
\arrow{r}
\arrow[dotted, bend right]{l}[swap]{\mbox{$\tree$}}
&0\ .
\end{tikzcd}
\end{equation*}
}

\subsection{Jacobi group and pseudodifferential operators}
We apply this poposition to the case
when $G$
is the Jacobi group,
$x$ is a Casimir operator,
and $M$ is a space of pseudodifferential operators
on the Jacobi upper half plane
$\HJ=\{(\tau,z): \tau \in \HS,
z\in \CC\}$.
In contrast to previous work,
we take operators in 
$\del_z$
instead of $\del_\tau$
to directly incorporate the geometry of $\HJ$
instead of relying on 
properties of modular forms;
see \cref{JacobiGroup},
\cref{DefPseudos}, and
\cref{CasimirDefn}.
Here, $M_i$ is the space
of operators with highest
power $i$ on $\del_z$.
These  carry slash actions
that are not merely parametrized by a weight~$k$
and a Jacobi index~$m$, but 
by a weight $k$ and a decomposition
of $m$ as $\ml+\mr$ for left and right Jacobi indices~$\ml$ and $\mr$ in $\CC$,
see \cref{SlashAction}.
We obtain a $G$-equivariant map
(see \cref{SplittingExists})
\begin{gather}
\label{isointro}
\tree: \ 
\CHJ
\xrightarrow{\ \ G\text{-equivariant }\ \ }\ 
\PDO(\del_{z}, C^{\infty}(\HJ))\, 
\end{gather}
that plays the role of 
the map in~\eqref{CMZ-map} in the theory of Cohen-Manin-Zagier
using
\cref{cor:algebraic-Gsplitting}
and a Casimir operator 
(see \cref{Intro:CasimirSubsection}
below
and \cref{Casimir-On-Holos})
with distinct eigenvalues
on the filtered components
$M_i$
for weights $k \geq 2$.
Here, $\tree$ splits the projection 
map 
$
\PDO(\del_{z}, C^{\infty}(\HJ))\, 
\rightarrow
\CHJ
$
onto the top coefficient.

\subsection{Jacobi forms}

We define 
the space
$\rmJ\Psi_{\mlr}$
of {\em Jacobi pseudodifferential operators}
by taking the
pseudodifferential operators
 invariant under
the slash action of the Jacobi group $\Gamma$ 
with a natural growth condition
reflecting that for
the space of Jacobi forms 
$J_{k,m}$,
see \cref{JacobiPseudodifferentialOperators}.
We take invariants
on both sides of~\eqref{isointro}
and impose growth conditions
to obtain a splitting map for Jacobi forms,
see \cref{InducesLinearSplitting}:
$$\tree:
\ \rmJ_{k,m}\ \ 
\ 
\xrightarrow{\ \hphantom{xxxxxx}\ \ }\ 
\Psi\rmJ_\mlr 
\, .
$$
We combine these splitting maps
for $k\geq 2$ to decompose 
the space of pseudodifferential
Jacobi forms:

\vspace{2ex}


\noindent
\discolorlinks{\bf  \cref{cor:isomorphism-to-pseudo-differential-jacobi-forms}}
{\em 
There is a $\CC$-linear isomorphism
from the formal direct product over
$k$ of all 
Jacobi forms 
$J_{k,m}$ of 
weight
$k\geq 2$
and
index $m>0$ 
to the space of Jacobi pseudodifferential operators $\rmJ\Psi_{\mlr}$
with $m=\ml+\mr$
with highest power $-2$ on $\del/\del z$:
\begin{gather*}
\prod_{k\geq 2}  \rmJ_{k,m}
\ \ \cong \ \ 
  \Big( \Psi\rmJ_\mlr 
  \Big)_{-2}
  \, .
\end{gather*}
}

\vspace{2ex}

This corollary then gives
a $1$-complex-parameter
family of Rankin-Cohen brackets
for Jacobi forms 
in the sense of
B\"ocherer~\cite{Boecherer}
as the pull-back 
of the noncommutative multiplication
of pseudodifferential operators
under the 
map $\tree$
(see \cref{Rankin-CohenBracketConstruction}
and \cref{BracketOfJacobiForms}):

\vspace{2ex}

\noindent
\discolorlinks{\bf 
Corollary.} 
{\em
For any parameter $\ixt\in \CC$
and $\nu \geq 0$,
there is a $\CC$-linear Rankin-Cohen bracket
$$
\begin{aligned}
[\ \ \ \ , \ \ \ ]^{\ixt}_{(\nu)}
:\ \ 
\rmJ_{k,m}
\ot\, \rmJ_{k',m'}
\ \longrightarrow\ 
\rmJ_{k+k'+\nu,\, m+m'} 
\, 
\end{aligned}
$$
for any weights $k,k'\geq 2$
and indices $m,m'> 0$
leveraging the noncommutative
multiplication of pseudodifferential
operators.
}

\vspace{1ex}

See \cref{ExplicitBrackets}
for the first few brackets
given explicitly in terms
of partials $\del/\del z$
and $\del/\del \tau$.

\subsection{Casimir operators}
\label{Intro:CasimirSubsection}
A key step in Cohen, Manin, and Zagier~\cite{CMZ}
uses direct calculation to
construct the equivariant 
map~(\ref{CMZ-map})
explicitly.
The analogous calculation
appears more complicated for  other automorphic forms.
Cohen, Manin, and Zagier explain
an alternate approach
(see~\cite[Section 2]{CMZ})
suggested by Beilinson
using a {\em Casimir element}
and the fact that the relevant Lie group is {\em reductive}.
This idea trades 
direct computation of the equivariant map~(\ref{CMZ-map}) 
for the potentially easier computation
of a Casimir operator.
Two problems arise
in trying to deploy this idea
for other automorphic forms:
\begin{itemize}
\item[1.]
The relevant Lie group 
may be {\em non-reductive}. 
Existence
of an analogous equivariant map 
must follow from other facts.
\item[2.]
We may ultimately trade one calculation for another
just as complicated: explicit determination of
the action of the Casimir operator.
(In~\cite{CMZ}, the Casimir operator had a simple
formula.)
\end{itemize}
We show how to exploit 
Beilinson's idea 
even when
the relevant Lie group is not reductive
and/or direct computation of the Casimir operator 
bears little enlightenment.
We construct a unique equivariant map like in~(\ref{CMZ-map})
 for functions on the Jacobi
upper half space $\HJ$
by constructing
a Casimir operator directly
as a product of {\em raising and lowering operators}.
Conley and Raum~\cite{ConleyRaum}
and also Bringmann, Conley, and Richter~\cite{BringmannConleyRichter}
use
Helgason's theories in differential geometry (see~\cite{Helgason1959}) 
to analyze operators covariant
under the action of a  nonreductive 
Lie algebra.  We adapt this approach 
to the setting of infinite-dimensional
vector bundles.
Note that we construct 
the Casimir operator
as composition of operators of degree 1
instead of using 
Helgason's theory~\cite[page 267,
Theorem~10]{Helgason1959}
for universal enveloping algebras.

\subsection{Why Jacobi forms?}
We concentrate on the setting of Jacobi forms
and functions $\HS\times \CC \rightarrow \CC$ 
of a modular variable
$\tau$ in $\HS$ and elliptic variable $z$ in $\CC$ 
with focus on $z$
for several reasons.

First, geometric constructions suggest considering
pseudodifferential operators
in $\del_z$ instead of $\del_\tau$.
A decomposition of the universal enveloping algebra
for the corresponding Jacobi Lie group 
(see~\cite{ConleyRaum}) gives
\begin{gather*}
\mathcal{U}(\mathfrak{g})
\otimes_{\CC[Z]}
\CC(Z)
\ \ \supset\ \ 
\mathcal{U}(\mathfrak{sl}_2(\RR))\otimes_\CC 
\mathcal{U}(\mathfrak{h}_2)\, ,
\end{gather*}
where $\mathfrak{h}_2$ is the complexified Heisenberg Lie algebra
with center generated by $Z$.
We view this as giving
two different kinds of 
pseudodifferential operators:
the first in $\del_ \tau$
arising from $\mathcal{U}(\mathfrak{sl}_2(\RR))$ and
the second in $\del_z$ from $\mathcal{U}(\mathfrak{h}_2)$.
The first kind 
in $\del_\tau$
may be related to modular forms
via the heat operator,
see~\cite{Choie,ChoieEholzer}.
Historically, phenomena attached
to $\mathcal{U}(\mathfrak{h}_2)$ 
suggest
that the second kind 
in $\del_z$
yields interesting
applications.
Indeed, 
modular completions of indefinite theta
functions may be viewed through the lens of 
representations
of $\mathcal{U}(\mathfrak{h}_2)$ 
(see~\cite{BringmannRaumRichter, Raum2015}).
Indefinite theta functions
and their restrictions to modular forms
capture combinatorial data in their coefficients; for example, they
are used to determine combinatorial
congruences, asymptotics, and identities,
see Ono~\cite{OnoUnearthing}.
They also lie at the foundation
of modern understandings of
mock theta series,
see~\cite{ramanujan-1988,Zwegers}.

Second,
our focus on the copy of $\mathcal{U}(\mathfrak{h}_2)$
requires a new 
action on pseudodifferential operators 
(see \cref{cor:action-on-pseudo-differential-operators}), 
in contrast to a mere extension of the action
in Cohen, Manin, and Zagier~\cite{CMZ} (see Lee~\cite{MinHoLee}).

Third,  Jacobi forms 
often help establish
new arguments for
other automorphic forms.
For example, see~\cite{BlochOkounkov},~\cite{ChoieParkZagier},~\cite{Dijkgraaf},~\cite{Ittersum},~\cite{KanekoZagier95},
and~\cite{Zagier16}.

Thus by exploring
pseudodifferential
operators in the 
elliptic variable $\del_z$
instead of the modular variable
$\del_{\tau}$,
we aim to establish new techniques
that go beyond
extending tools for elliptic 
modular forms.
We hope this may open doors to
new 
avenues of research.


\subsection{Outline}

In \cref{JacobiPlane},
we recall 
the action of the Jacobi group $\Gamma$ 
and (extended) real Jacobi Lie group $G$
on the
Jacobi upper half plane $\HJ:=\HS\times \CC$.
We introduce pseudodifferential operators
$\po$ in the elliptic variable $z$ in $\CC$
in \cref{DefnPseudoOperators}
and define an action of the Jacobi group in
\cref{SlashActionSection}
preserving the noncommutative multiplication
of pseudodifferential operators,
see \cref{almostautos}.
A closed form for this action appears
in \cref{cor:action-on-pseudo-differential-operators}.

We turn to filtered modules 
in \cref{sec:algebraic-splitting} 
and give a general algebraic 
condition for $G$-equivariant
splittings in terms of distinct eigenvalues,
see \cref{cor:algebraic-Gsplitting}.
In \cref{sec:casimir-operator},
we construct a Casimir
operator on the space of
pseudodifferential operators
as a product of covariant 
raising and lowering operators
(see~\cref{def:raisingandlowering})
with closed form given in 
\cref{Casimir-Like-On-Smooth-Functions}
and \cref{Casimir-On-Holos}.
We show that this Casimir operator is equivariant 
(see \cref{CasimirEquivariant})
with respect to the slash action.
Note that in Appendix A, 
we explain how these new raising,
lowering, and Casimir
 operators
were found 
through an analysis of
operators 
covariant under 
the action of the
complex Lie algebra $\mathfrak{g}$ 
attached to the Jacobi group $G$.

In \cref{sec:equivariant-splitting-exists},
we use the Casimir operator to
show existence 
of 
an equivariant mapping
as in \eqref{isointro}
(see \cref{SplittingExists} and \cref{TreeRestrictsToInvariants})
and give an explicit closed form.
We give applications in \cref{sec:JacobiForms}
to Jacobi forms.
We define Jacobi pseudodifferential
operators and give
an isomorphism 
to the product space of 
Jacobi forms
in \cref{GammaInvariants}.
This gives rise to new families of
Rankin-Cohen brackets
for Jacobi forms
in \cref{sec:RankinCohen}
using the pull-back
of the noncommutative
multiplication on
pseudodifferential operators.
We also include an iterative approach
for computing Rankin-Cohen brackets.


\subsection{Notation}
For a group $G$ and field $\mathbb{F}$,
we say an $\mathbb{F}$-vector space $M$ is a $G$-module when $M$ is an $\mathbb{F}[G]$-module, and we say any function 
$f: M\rightarrow M'$ of $G$-modules
is $G$-equivariant when $f$
is an $\mathbb{F}[G]$-module homomorphism.
All slash actions are right group actions.
We assume all rings are unital,
all algebras are 
associative $\CC$-algebras,
and tensor products
are taken over $\CC$
unless otherwise indicated:
$\otimes = \otimes_{\CC}$.

\section{Jacobi plane and groups}\label{JacobiPlane}

In this section, we recall the setting of Jacobi forms;
see the
reference texts by Eichler and Zagier~\cite{EichlerZagier} and 
Berndt and Schmidt~\cite{BerndtSchmidt}.

\subsection{The Jacobi upper half plane}
The {\em Jacobi upper half plane}
(of cogenus $1$) is the space
$
  \HJ:=\HS \times \CC,
$
for 
$\HS := \{\tau\in \CC: \Im(\tau) > 0\}$
the usual
Poincar\'e upper half plane.
For $(\tau,z)$ in $\HJ$,
we call 
$$
\begin{aligned}
&\tau \in \mathbb{H}\
\text{the {\em modular variable}, and}\\ 
&z\in \CC\ \text{the {\em elliptic variable}}.
\end{aligned}
$$
We write $C^{\infty}(\HJ)$ 
for the smooth functions, i.e., 
 $\phi: \HJ \rightarrow \CC$
which are smooth as functions
of four real variables.

\subsection{The Jacobi groups
$\Gamma$
and $G$}
\label{JacobiGroup}
Recall the associated Jacobi groups:
$$
\begin{aligned}
&\text{the {\em (full) Jacobi group}}\ \text{ is }\quad
 && \Gamma &:=\ \ 
 & \SL{2}(\ZZ) \ltimes \ZZ^2\, ;
 && \\
&\text{the {\em (centrally extended) real Jacobi group}}\
 \text{ is }\quad
 && G &:=\ \
 & (\text{SL}_2(\RR)\ltimes \RR^2) \times \RR\, 
\text{.} &&
\end{aligned}
$$
Here, the semidirect product is defined via the 
right action of $\SL{2}$ on $\ZZ^2$ or $\RR^2$
(written as row vectors),
$$(M,X)(M',X')=(MM', XM'+X')
\qquad\text{ for } M,M'
\in\text{SL}_2
\quad\text{ and }
X,X'\ \text{in }\ZZ^2 \text{ or } \RR^2\, ,
$$
and  $G$
arises from a nontrivial central extension 
by the additive group $\RR
\cong \{(I,0,\kappa): \kappa\in \RR\}$,
$$
0\longrightarrow
\RR \longhookrightarrow G
\longtwoheadrightarrow \text{SL}_2(\RR)\ltimes \RR^2
\longrightarrow 0,
$$
giving the multiplication 
$$
(M,X,\kappa)\, (M',X',\kappa')
\ :=\ 
\big(MM',\ XM'+X',\ \kappa+\kappa'+ 
\det
\left(\begin{smallmatrix}XM'\\X'\end{smallmatrix}\right)
\,  \big)\ .
$$
In fact, 
$
G\cong \text{SL}_2(\RR)\ltimes H_3(\RR),
$
where $H_3(\RR)$ is the real Heisenberg Lie group of upper triangular
$3\times 3$ matrices.
We identify $\Gamma$ with the
quotient group of~$(\SL{2}(\ZZ) \ltimes \ZZ^2 ) \times \ZZ \subset G$ by its
central subgroup 
$\ZZ\cong \{(I,0,\kappa): \kappa\in \ZZ\}$
throughout.
This implies the group actions of $G$ defined below
(see \cref{GroupActions} and \cref{action})
factor through~$\Gamma$ if they are trivial on~$\ZZ \subset G$.

\subsection{Jacobi cocycles}
\label{JacobiCocycles}
%
Fix a {\em Jacobi index} $m\in \ZZ$.
Let $\beta$
be the usual scalar cocycle with respect
to the left action of $\text{SL}_2(\RR)$ on $\HS$
and let $\alpha_{m}$ be the usual {\em Jacobi cocycle}:
$$ 
\begin{aligned}
\beta: &\ G\times \HJ\rightarrow \CC,
\qquad
&& \, \beta
\big(g, (\tau, z)\big)
= 
(c\tau+d)^{-1}
, \quad\text{ and }
\\
\alpha_m: &\ G \times \HJ\rightarrow \CC,
\qquad
&& \alpha_{m}\big(g, (\tau,z)\big)
= 
\exp\Big(m\, 2 \pi i\, \big( 
\ \sfrac{-c (z + \lambda \tau + \mu)^2}
{(c \tau + d)} + \lambda^2\tau + 2 \lambda z + \lambda\mu+\kappa\big)\Big)\ 
\end{aligned}
$$
for
$g= \big( \left[\begin{smallmatrix}
a & b \\ c & d
\end{smallmatrix}\right]
, (\lambda, \mu),\kappa \big)$
in $G$.
For the function 
defined by holding a group element $g$
fixed, 
we simply write 
$$
\beta(g): \ \HJ \rightarrow \CC
\qquad\text{and}\qquad
\alpha_{m}(g):\ \HJ \rightarrow \CC
\, .
$$
More generally,
for $k$ in $\ZZ$, we define the 
{\em weight $k$, index $m$ cocycle}
\begin{gather}
\alpha_{k,m}:G\times \HJ\rightarrow \CC
\quad\tx{ by }\quad
\alpha_{k,m}:=\beta^k\cdot \alpha_m
\, .
\end{gather}
Then $\alpha_{k,m}: G \rightarrow C^{\infty}(\HJ)$ 
gives a {\em factor of automorphy}.

\subsection{Slash group actions}
\label{GroupActions}

The real Jacobi group $G$ acts on
$\HJ$ by
\begin{gather*}
g(\tau,z) 
 =
  \Big(\, \frac{a \tau + b}{c \tau + d}\, ,\ \frac{z + \lambda \tau + \mu}{c \tau + d} \Big)
\ \in \HJ\ 
\end{gather*}
for any
$g=\big(\left[\begin{smallmatrix}a & b\\c & d \end{smallmatrix}\right],
(\lambda,\mu),\kappa\big)\in G$.
(Note that $\kappa$ plays no role in this action.)  
The action of $G$ on $\HJ$
extends to a family of actions on complex-valued
functions 
parameterized by 
both a {\em weight} $k \in \ZZ$ 
and a {\em (Jacobi) index} $m \in \CC$:
The {\em slash operator $|_{k,m}$ of weight $k$ and index $m$} 
maps a function $\phi :\, \HJ \rightarrow \CC$ 
and element 
$g$ in $G$
to the function
$\big( \phi\, \big|_{k, m}\, g \big): \HJ \rightarrow \CC$
defined by 
$$
\phi\ \big|_{k, m}\ g
\
:=
\
\alpha_{k, m}(g)\
  (\phi\circ g),
$$
i.e.,
$$
\big(\phi\ \big|_{k, m}\ g\big)
(\tau, z)
\ :=\ 
\beta^k(g)\ \alpha_{m}\big(g,(\tau,z)\big)\
  \phi\big(g (\tau, z) \big)
\quad\qquad\text{ for all }
(\tau,z)\in\HJ,\ \ g\in G\, .
$$


\section{Pseudodifferential operators
in the elliptic variable}
\label{DefnPseudoOperators}

We consider pseudodifferential operators
on the Jacobi upper half space $\HJ$.
Note that Cohen, Manin, and Zagier~\cite{CMZ} consider pseudodifferential
operators in the modular variable $\del_{\tau}$, while we
consider pseudodifferential operators in the 
elliptic variable $\del_z$.
Results on modular forms
previously
have been adapted to larger sets of automorphic forms
by focusing on the modular variable $\tau$,
but we turn attention
to the elliptic variable $z$
instead to
capture structures
instrinsic to Jacobi forms
versus
elliptic modular forms.
Coefficients for our pseudodifferential operators are either smooth or holomorphic, i.e., taken from either
$$
C^{\infty}(\HJ)=\{
\phi:
\HJ\rightarrow \CC\  
\text{smooth as a function
of $4$ real variables}\}
\ \ \text{ or }\ \ 
\cO(\HJ):=
\{ \phi:\HJ\rightarrow \CC\
\text{ holomorphic} \}\, . $$
\subsection{Pseudodifferential operators}
\label{DefPseudos}
Let $R=C^{\infty}(\HJ)$ or $\cO(\HJ)$.  We write
$\ddel$ for the 
formal differential operator 
with respect to the elliptic variable $z$ of $\HJ$.
A {\em pseudodifferential operator} with respect to 
$\ddel:=\delz$
with coefficients in $R$ is a formal series 
$$
  \sum_{-\infty< n \ll \infty} 
  \phi_n \ddel^n \quad\text{ with each }
\phi_n \in R\ .
$$
Here, $n\ll \infty $ indicates that
only finitely many terms with positive $n$ are nonzero.
The space of all pseudodifferential
operators is then
\begin{gather*}
\PDO(R, \ddel)\
:=\ \Big\{\  \sum_{-\infty<n \ll \infty} \phi_n \ddel^n:
\phi_n\in R \, \Big\}.
\end{gather*}

\subsection{Noncommutative multiplication}
\label{ssec:pseudodifferential_product}
The algebra of (ordinary) differential operators 
(in the elliptic variable $z$),
$$
\rmD\rmO(R,\ddel)\ :=\
  \Big\{\ \sum_{0\leq n\ll \infty} \phi_n \ddel^n: \ \phi_n\in R\ 
\Big\},
$$
is a subspace of $\PDO(R,\ddel)$
with multiplication
defined by letting $\ddel=\delz$ act as the partial
derivative $\del/\del z$,
\begin{gather*}
\ddel\, \phi = \phi\, \ddel + \tfrac{\del}{\del z}(\phi)
\quad\text{ for all }\phi\in R\, ,
\end{gather*} 
so that
$$
\Big(\sum_{0\leq n \ll\infty}
\phi_n\, \ddel^n \Big)
\Big(\sum_{0\leq m \ll\infty}
\phi'_m \, \ddel^m \Big)
= \sum_{n,m}\
\sum_{0 \le r\leq n} \
\tbinom{n}{r}\ \phi_n\ \tfrac{\del^r}{\del z^r}(\phi'_m)\ \ddel^{n + m - r}
\, .
$$

\vspace{1ex}

For fixed $r$, the binomial coefficent $\binom{n}{r}$
defines a polynomial in $n\geq 0$ that can be extended
to all $n\in \ZZ$.
Hence we may define
a multiplication on
$\PDO(R, \ddel)$ by extending to negative powers 
on $\ddel$,
\begin{gather}
\label{eq:multiplication-of-pseudo-differential-operators}
  \Big( \sum_{-\infty< n \ll \infty} 
  \phi_n\, \ddel^n \Big)
  \Big( \sum_{-\infty< m \ll \infty} 
  \phi'_m \, \ddel^m \Big)
=
  \sum_{n, m}\ \sum_{0 \le r} \
\tbinom{n}{r}\ \phi_n\ \tfrac{\del^r}{\del z^r}(\phi'_m)\ \ddel^{n + m - r}
\, ,
\end{gather}
which equips this space with a non-commutative (associative) algebra structure
(see~\cite{CMZ})
with the space of ordinary differential operators as a subalgebra.

\subsection{Filtration}
\label{Filtration}
We define the {\em order} (filtered degree) of a nonzero pseudodifferential operator
to be the largest $n$ for which the coefficient of 
$\ddel^n$ is nonzero, and we call that nonzero coefficient the 
{\em symbol} or {\em leading coefficient}
of the operator.
Order then defines a natural ascending
$\ZZ$-filtration,
$$\PDO(R, \ddel)= \bigcup_k \PDO(R, \ddel)_{k}\
\qquad\text{ with }\ \ 
\PDO(R, \ddel)_{k}\subset
\PDO(R, \ddel)_{k+1}
\quad\text{ for all $k$ in $\ZZ$}
$$
for $\PDO(R, \ddel)_{k}$
the set of pseudodifferential operators of order $k$ or less,
\begin{gather*}
\PDO(R, \ddel)_{k}=
  \Big\{ 
    \sum_{n\leq k} \phi_n \ddel^{n}
    :\ \phi_n\in R
  \Big\}\, ,
\end{gather*}
since 
$
\PDO(R, \ddel)_{k}\cdot
\PDO(R, \ddel)_{k'}\subset
\PDO(R, \ddel)_{k+k'} 
$
for any integers $k,k'$.
When restricting to elements of
$\PDO(R, \ddel)_{k}$,
we call the coefficient of $\delz^{k}$
the {\em top coefficient} (which may be zero) to distinguish
from the leading coefficient (which is always nonzero).

\section{Slash action on pseudodifferential
operators}
\label{SlashActionSection}

We introduce a right action of the extended
real Jacobi group $G$
on the space of pseudodifferential
operators 
depending on a pair of (left, right) parameters $\ml, \mr$ in $\CC$ which decompose
an index $m$ as $m=\ml+\mr$
and a weight $k$ in $\ZZ$.
We define this action by separating a cocycle
tracking the action into two parts,
one acting on the left and one acting on the right.
We use the parameter $k$
corresponding to weight in order to later consider 
operators 
which change the weight.

\subsection{Slash Action}
\label{SlashAction}
Every pair of Jacobi cocycles
$
  \alpha,\alpha':\,  G
\rightarrow  C^{\infty}(\HJ)
$
as in \cref{JacobiCocycles}
defines a right action of 
$G$ on 
the pseudodifferential operators
$\po$ defined by
\begin{gather*}
  \phi\, \ddel^n\ \  \big|_{(\alpha, \alpha')}\ \ g 
\ =\
  \alpha(g)\  (\phi\circ g)\
\boldsymbol{\del}_{\!z\circ g}^n \ \alpha'(g)
\quad\quad\text{ for }
\phi \in C^{\infty}(\HJ), \ g \in G
\, ,
\end{gather*}
where $z\circ g$ is shorthand notation for the
$z$-coordinate of $g(z,\tau)$:
$$\boldsymbol{\del}_{\! z\circ g}= 
\beta^{-1}(g)\ \ddel =(c \tau + d)\, \ddel
\quad\quad\text{ for }
g=\Big(\big(\begin{smallmatrix}a & b\\c & d\end{smallmatrix}\big),
(\lambda,\mu),\kappa\Big)\in G, \ (\tau,z) \in \HJ\ .
$$
We next write $|_{k, \mlr}$ for the (right) slash action
of $G$ arising from the pair of cocycles 
$\alpha_{k,\ml}$
$\alpha_{\mr}$.

\vspace{0ex}

\begin{definition}
\label{action}{\em
For each $k\in \ZZ$ and 
(left-right pair) $\ml,\mr\in\CC$, 
define a slash action
of $G$ on $\po$,
$$\big|_{k, \mlr}: \ \
\po \times G\ \longrightarrow\ \po
,  $$
by
\begin{gather*}
\begin{aligned}
\phi \, \ddel^n\,
\ \ \big|_{k,\mlr}\ \ g
\ \ :=&\ \
\alpha_{k-n,\ml}(g)
\  (\phi \circ g)\ \ddel^n 
\ \alpha_{\mr}(g)\\
=&\
  \beta^{k-n}(g)\, \alpha_{\ml}(g)\,  (\phi\circ g)\ 
\ddel^n \ \alpha_{\mr}(g)
\quad\quad\text{ for }
\phi \in C^{\infty}(\HJ), \ g \in G.
\end{aligned}
\end{gather*}
}
\end{definition}

\vspace{1ex}

Note that we could instead incorporate the additional factor 
$\beta^{k-n}(g)$ into the second cocycle
without changing the action, as $c \tau + d$ and $\ddel$ commute.
By restricting $C^{\infty}(\HJ)$ to $\holos$,
we obtain a slash action of the real Jacobi group $G$ on $\poh$:
For each $k\in \ZZ$ and $\ml,\mr\in \CC$,
$$\big|_{k, \mlr}: \ \
\poh \times G\ \longrightarrow\ \poh
\, .  
$$
\subsection{Group action on products}
The group $G$ does not act as algebra automorphisms
on $\po$ via this slash operator. 
But if we consider the family
of actions
over all $\ml,\mr\in\CC$,
we uncover a useful transformation law which may be checked directly:
\begin{proposition}\label{almostautos}
The family of slash actions of 
$G$ preserves multiplication
of pseudodifferential operators:
For any $\psi,\psi'$ in $\po$
and $g\in G$:
$$
\big(\psi\ \big|_{k,(m_1,m_2)}\ g\big)
\cdot\big(\psi'\ \big|_{k,(-m_2,m_3)}\ g\big)
\ =\ 
(\psi\cdot\psi')\ \big|_{k,(m_1, m_3)}
\ g 
\qquad\tx{
for all $k\in \ZZ$ and $m_1,m_2,m_3\in\CC$.}
$$
\end{proposition}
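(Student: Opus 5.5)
The plan is to strip off the weight and index factors in \cref{action} and reduce everything to one structural fact: the change of variables induced by $g$ is an algebra automorphism of $\po$. For $g\in G$, define $\Phi_g$ on $\po$ by
$$
\Phi_g\Big(\sum_n \phi_n\,\ddel^n\Big)\ :=\ \sum_n (\phi_n\circ g)\ \boldsymbol{\del}_{\!z\circ g}^n,
\qquad \boldsymbol{\del}_{\!z\circ g}=\beta^{-1}(g)\,\ddel .
$$
Because $\beta(g)$ depends only on $\tau$, it commutes with $\ddel$, so $\beta^{k-n}(g)\,\ddel^n=\beta^{k}(g)\,\boldsymbol{\del}_{\!z\circ g}^n$. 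Hence \cref{action} reads
$$
\psi\,\big|_{k,(\ml,\mr)}\,g\ =\ \beta^{k}(g)\,\alpha_{\ml}(g)\ \Phi_g(\psi)\ \alpha_{\mr}(g),
$$
with $\alpha_{\ml}(g),\alpha_{\mr}(g)$ viewed as order-$0$ operators in $\po$.

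\emph{Step 1: $\Phi_g$ is multiplicative.} The map $z\mapsto z\circ g=(z+\lambda\tau+\mu)/(c\tau+d)$ is affine in $z$ with $z$-derivative $\beta(g)$. The chain rule gives $\boldsymbol{\del}_{\!z\circ g}(\phi\circ g)=(\partial_z\phi)\circ g$, so
$$
\boldsymbol{\del}_{\!z\circ g}\,(\phi\circ g)\ =\ (\phi\circ g)\,\boldsymbol{\del}_{\!z\circ g}+(\partial_z\phi)\circ g .
$$
This is exactly the commutation relation defining the product in \cref{ssec:pseudodifferential_product}, with $\ddel$ replaced by $\boldsymbol{\del}_{\!z\circ g}$. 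Iterating it, and extending to negative powers through the polynomial binomial coefficients $\binom{n}{r}$ in \eqref{eq:multiplication-of-pseudo-differential-operators}, yields $\Phi_g(\psi\psi')=\Phi_g(\psi)\Phi_g(\psi')$. Convergence is not an issue: for each target order only finitely many terms contribute.

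\emph{Step 2: the indices telescope.} Since $\alpha_m(g)=\exp(m\cdot(\ldots))$ is exponential in $m$, we have $\alpha_{m_2}(g)\,\alpha_{-m_2}(g)=1$. In the product
$$
\big(\alpha_{m_1}(g)\,\Phi_g(\psi)\,\alpha_{m_2}(g)\big)\big(\alpha_{-m_2}(g)\,\Phi_g(\psi')\,\alpha_{m_3}(g)\big)
$$
the inner factors therefore cancel. By Step 1 the result is $\alpha_{m_1}(g)\,\Phi_g(\psi\psi')\,\alpha_{m_3}(g)$, which is the index part of $(\psi\psi')\,|\,g$ with pair $(m_1,m_3)$. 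This is the heart of the proposition.

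\emph{Step 3 (main obstacle): the weight factor.} Each factor $\beta^k(g)$ is central in $\po$, since it is independent of $z$. The left side therefore carries $\beta^{2k}(g)$, while the right side carries $\beta^{k}(g)$. Literally, then, the displayed same-$k$ identity follows from Steps 1 and 2 exactly when $k=0$, the case in which the weight plays no role. For general $k$ the same computation gives
$$
\big(\psi\,|_{k,(m_1,m_2)}\,g\big)\big(\psi'\,|_{k',(-m_2,m_3)}\,g\big)=(\psi\psi')\,|_{k+k',(m_1,m_3)}\,g .
$$
I would therefore present the proof as the weight-free computation of Steps 1 and 2, which establishes the statement for $k=0$. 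I would then note explicitly that for $k\neq 0$ the stated same-weight form fails: both sides equal a power of $\beta(g)$ times $(\psi\psi')\,|_{0,(m_1,m_3)}\,g$, with exponent $2k$ on the left and $k$ on the right. This is the one point where the statement as worded needs care, and I flag it rather than silently altering the weights.
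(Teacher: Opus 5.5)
Your proof is correct and is exactly the direct verification the paper leaves to the reader (the paper gives no argument beyond ``may be checked directly''). The whole content is three facts: the factorization $\psi\,|_{k,(\ml,\mr)}\,g=\beta^{k}(g)\,\alpha_{\ml}(g)\,\Phi_g(\psi)\,\alpha_{\mr}(g)$; the multiplicativity of $\Phi_g$, which holds for all $n\in\ZZ$ term by term from $\boldsymbol{\del}_{\!z\circ g}^n=\beta^{-n}(g)\ddel^n$ and $\tfrac{\del^r}{\del z^r}(\phi'\circ g)=\beta^{r}(g)\,(\tfrac{\del^r\phi'}{\del z^r})\circ g$; and $\alpha_{m_2}(g)\alpha_{-m_2}(g)=1$.

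Your flag in Step 3 is also right. Already $\psi=\psi'=1$ gives $\beta^{2k}(g)\,\alpha_{m_1+m_3}(g)$ on the left but $\beta^{k}(g)\,\alpha_{m_1+m_3}(g)$ on the right. So the printed same-weight identity holds for all $g$ only when $k=0$; in general the weights must add, with $k,k'$ giving $k+k'$. The case $k=0$ is the only one the paper actually uses, in \eqref{product}, where $\rmJ\Psi_\mlr$ is defined via $|_{0,\mlr}$.
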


\subsection{Slash action in canonical form}
In order to express the slash action 
on pseudodifferential
operators explicitly, we require a little lemma using
the multinomial coefficient
\begin{gather*}
  \sbinom{n}{m_1, \ldots, m_{\ell}}
\ :=\ \ 
  \sfrac{\Gamma(n+1)}
       {\prod_i\,  \Gamma(m_i+1) \rule{0ex}{2ex}}
\  \text{.}
\end{gather*}
Note that this agrees with the combinatorial
definition
$
 \sfrac{n!}
       {\prod_i\, m_i !\,\rule{0ex}{1.5ex}}
       $
for $n, m_i>0$.
For $m_1, \ldots, m_{\ell-1},p \geq 0$ and 
$n\in \ZZ$,
\begin{gather}
\label{multinomial}
 \sbinom{n}{m_1, \ldots, m_{\ell-1}, n-p}
\ =\
 \sfrac{n (n-1) \cdots (n-p+1) }
{\prod_{i} \, m_i !
\rule{0ex}{2ex}}
\, .
\end{gather}

We relegate the proof of the next
 lemma to
 \cref{AppendixB}.
\begin{lemma}\label{triplesum}
In the noncommutative ring $\, \po$,
for all $c_1,c_2 \in \CC$ and $n \in \ZZ$,
\begin{gather*}
  (\ddel + c_1 z + c_2)^n
\ =\ \
  \sum_{0\leq p}\
  \sum_{\substack{ 0\, \le\, t, r, s \\ t+s+2r=p\rule{0ex}{1.5ex}}}
    \sbinom{n}{t, s, 2r, n-p}
\  \sfrac{2^r \, \Gamma(r + \tfrac{1}{2})\rule[-1ex]{0ex}{3ex}}
{\Gamma(\tfrac{1}{2})\rule[0ex]{0ex}{2.5ex}}
\ \  c_1^r\, c_2^s\ (c_1 z)^t\ \ddel^{n - p}
\text{.}
\end{gather*}
\end{lemma}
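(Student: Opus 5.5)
The plan is to \emph{conjugate by an exponential} instead of expanding the power directly. Set $f(z) := \tfrac{c_1}{2} z^2 + c_2 z$. Then $e^{\pm f}$ is a holomorphic (in particular smooth) function on $\HJ$, so it lies in the coefficient ring. Multiplication by $e^{f}$ is an invertible element of $\po$ with inverse $e^{-f}$. By the commutation rule $\ddel\,\phi = \phi\,\ddel + \tfrac{\del}{\del z}\phi$ we get
\begin{gather*}
e^{-f}\,\ddel\, e^{f} \;=\; \ddel + f'(z) \;=\; \ddel + c_1 z + c_2 .
\end{gather*}
Conjugation by an invertible element is an algebra automorphism of the associative algebra $\po$. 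Hence for every $n\in\ZZ$, including negative $n$, we have
\begin{gather*}
(\ddel + c_1 z + c_2)^n = e^{-f}\,\ddel^n\, e^{f}.
\end{gather*}
For negative $n$ this uses only that the inverse of $\ddel+c_1z+c_2$ is $e^{-f}\ddel^{-1}e^{f}$, which follows from the automorphism property.

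Next, expand $\ddel^n e^{f}$ with the multiplication rule~\eqref{eq:multiplication-of-pseudo-differential-operators}. This gives
\begin{gather*}
\ddel^n e^{f} = \sum_{p\ge 0}\tbinom{n}{p}\,\tfrac{\del^p}{\del z^p}\big(e^{f}\big)\,\ddel^{n-p},
\end{gather*}
valid for all $n\in\ZZ$ with the polynomial extension of $\binom{n}{p}$. So it remains to compute $e^{-f}\,\tfrac{\del^p}{\del z^p}e^{f}$. For this, use the Taylor generating function in an auxiliary variable $h$:
\begin{gather*}
e^{f(z+h)-f(z)} \;=\; \exp\!\big(h\,(c_1 z + c_2) + \tfrac{c_1}{2}h^2\big) \;=\; \sum_{t,s,r\ge 0}\frac{(c_1 z)^t\, c_2^s\, (c_1/2)^r}{t!\,s!\,r!}\, h^{t+s+2r}.
\end{gather*}
Extracting the coefficient of $h^p$ and multiplying by $p!$ gives
\begin{gather*}
e^{-f}\,\tfrac{\del^p}{\del z^p}e^{f} \;=\; p!\sum_{t+s+2r=p}\frac{(c_1z)^t c_2^s c_1^r}{2^r\, t!\,s!\,r!}.
\end{gather*}

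Combining the two expansions, the coefficient of $(c_1z)^t c_2^s c_1^r\,\ddel^{n-p}$ is
\begin{gather*}
\tbinom{n}{p}\,p!\,\frac{1}{2^r\,t!\,s!\,r!} \;=\; \frac{n(n-1)\cdots(n-p+1)}{2^r\, t!\,s!\,r!}.
\end{gather*}
It remains to match this with the stated coefficient. By~\eqref{multinomial},
\begin{gather*}
\sbinom{n}{t,s,2r,n-p} = \frac{n\cdots(n-p+1)}{t!\,s!\,(2r)!}.
\end{gather*}
The duplication identity $\Gamma(r+\tfrac12)/\Gamma(\tfrac12) = (2r)!/(4^r r!)$ gives
\begin{gather*}
2^r\,\Gamma(r+\tfrac12)/\Gamma(\tfrac12) = (2r)!/(2^r r!).
\end{gather*}
Multiplying these two expressions yields exactly $n\cdots(n-p+1)/(2^r\,t!\,s!\,r!)$, as required.

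I expect the only delicate point to be the justification for negative $n$. One must check that the identity $(e^{-f}\ddel e^{f})^n = e^{-f}\ddel^n e^{f}$ holds in the completed algebra. One must also check that the expansion of $\ddel^n e^f$ via~\eqref{eq:multiplication-of-pseudo-differential-operators} is legitimate, since the sum over $p$ is infinite but contributes finitely to each fixed power of $\ddel$. Both points follow from associativity of $\po$, as cited from~\cite{CMZ}.
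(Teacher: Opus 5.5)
Your proof is correct, and it takes a genuinely different route from the paper's.

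\textbf{The paper's route.} The paper first treats $n\ge 0$ inside the ring of ordinary differential operators. It encodes the coefficients of $(\ddel+c_1z+c_2)^n$ in a generating series $F(u,v,x,y)$. Left multiplication by $\ddel+c_1z+c_2$ gives the equation $(1-x-y-v-u\tfrac{\del}{\del x})F=1$. The paper solves this as $\sum_r 2^r\tfrac{\Gamma(r+\frac12)}{\Gamma(\frac12)}\,u^r(1-x-y-v)^{-1-2r}$ and extracts coefficients using the generalized multinomial series and the reflection formula. Negative $n$ is handled separately: the paper argues that the coefficients are polynomials in $n$ and extends them.

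\textbf{Your route.} You observe that $c_1z+c_2$ is the logarithmic derivative of the unit $e^{f}$, so $\ddel+c_1z+c_2=e^{-f}\,\ddel\,e^{f}$. Conjugation by a unit is an automorphism of the associative algebra $\po$. This gives all $n\in\ZZ$ at once and removes the paper's separate (and only sketched) extension to negative powers. The coefficient computation then reduces to the Taylor expansion of $e^{f(z+h)-f(z)}$. This also shows where the factor $2^r\Gamma(r+\frac12)/\Gamma(\frac12)=(2r)!/(2^r r!)$ comes from: it is simply the $\tfrac{c_1}{2}h^2$ term.

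\textbf{What each approach buys.} The paper's argument is purely formal, so it would still work with a coefficient ring containing no exponentials. Yours needs $e^{\pm f}$ to be an actual (here holomorphic) coefficient, which is the case. As a bonus, in the application in \cref{cor:action-on-pseudo-differential-operators} the cocycle $\alpha_{\mr}$ is itself $e^{f}$ times a factor independent of $z$, with $c_1,c_2$ depending on $\tau$. Your argument therefore shows that the lemma, as used there, is just the Leibniz expansion of $\ddel^n\alpha_{\mr}$. It also covers the $\tau$-dependent constants that the paper passes over.
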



The preceding lemma 
allows us to give an explicit
formula expressing the slash action of $G$
on pseudodifferential operators
$\po$
with all $\ddel$ to the far right.
Compare with~\cite[Equation~(1.7)]{CMZ}.
Note that setting $\lambda=\mu=\kappa=0$,
$z=0$,
and $\mr=\ml=0$ in the
corollary below gives the action 
of the modular group.
We interpret $0^0$ as $1$ in the next formula.
\begin{cor}
\label{cor:action-on-pseudo-differential-operators}
For any 
$g= \big( \left[\begin{smallmatrix}a & b \\ c & d
\end{smallmatrix}\right], (\lambda, \mu),\kappa \big)$ 
in $G$ and $\ml, \mr\in \CC$ with $m=\mr+\ml$,
$$
\begin{aligned}
&\phi \, \ddel^n\ \ \big|_{k,\mlr}\ \ 
g\ 
=\ 
&\sum_{0\leq p}\sum_{\substack{0\, \le t, s, r\\ t+s+2r=p\rule{0ex}{1.5ex}} }
  \hspace{-.5ex}
  c_g(t,s,r,n)
  \ \sfrac{\alpha_{m}(g, (\tau,z))}{(c \tau + d)^{r +t +s+k-n}}\
  z^t\ (\phi \circ g) 
  \ \ddel^{n - p}
\ \ 
\end{aligned}
$$
for $ n\in \ZZ$ and
$\phi=\phi(z)$ in $C^{\infty}(\HJ)$, where
\begin{gather*}
c_g(t,s,r,n)\ 
:=\ 
\sbinom{n}{t, s, 2r, n-p}\
  \sfrac{2^r\ \Gamma\big(r + \tfrac{1}{2}\big)}
{\Gamma\big(\tfrac{1}{2}\big)\rule{0ex}{2.5ex}}\
  \big(4 \pi i \mr\big)^{r + t + s}\ \
  (-c)^{r+t} \ \big(d \lambda-c\mu\big)^s
\ \  \
\in \CC\, .
\end{gather*}
\end{cor}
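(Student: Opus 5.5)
The plan is to move the right-hand cocycle $\alpha_{\mr}(g)$ past $\ddel^n$ by conjugation. Then \cref{triplesum} expands the resulting power into canonical form.

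Starting from \cref{action}, we have
$$\phi\,\ddel^n\,\big|_{k,\mlr}\,g=\beta^{k-n}(g)\,\alpha_{\ml}(g)\,(\phi\circ g)\,\ddel^n\,\alpha_{\mr}(g).$$
Write $A:=\alpha_{\mr}(g)$. This is a nowhere-vanishing smooth function of the form $\exp(2\pi i\mr\,Q)$, where $Q$ is quadratic in $z$ with coefficients depending on $\tau$. In $\po$ we have the identity $\ddel\,A=A\,(\ddel+A^{-1}\del_z A)$. Hence $A^{-1}\ddel A=\ddel+\ell$, where $\ell=\del_z\log A$. Since conjugation by $A$ is an algebra automorphism of $\po$, we get $A^{-1}\ddel^nA=(\ddel+\ell)^n$. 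This holds first for $n\ge0$, and for negative $n$ by taking inverses, since $(\ddel+\ell)$ is invertible in $\po$ with inverse $A^{-1}\ddel^{-1}A$. Therefore $\ddel^nA=A\,(\ddel+\ell)^n$.

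Next I compute $\ell$ explicitly. Differentiating the exponent of $\alpha_{\mr}$ with respect to $z$ gives
$$\ell=2\pi i\mr\Big(\tfrac{-2c(z+\lambda\tau+\mu)}{c\tau+d}+2\lambda\Big)=\tfrac{4\pi i\mr}{c\tau+d}\big(-cz+(d\lambda-c\mu)\big),$$
using $\lambda(c\tau+d)-c(\lambda\tau+\mu)=d\lambda-c\mu$. So $\ell=c_1z+c_2$ with
$$c_1=\tfrac{-4\pi i\mr\,c}{c\tau+d},\qquad c_2=\tfrac{4\pi i\mr(d\lambda-c\mu)}{c\tau+d}.$$
These depend on $\tau$ but not on $z$, so they commute with $\ddel$.

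Now I apply \cref{triplesum}. Its proof only uses that $c_1,c_2$ commute with $\ddel$ and with $z$, so it holds verbatim for $c_1,c_2$ in the $\ddel$-constants (functions of $\tau$); this is the one point to check rather than assume. The expansion produces the monomials $c_1^r c_2^s (c_1z)^t$. These contribute $(4\pi i\mr)^{r+s+t}(-c)^{r+t}(d\lambda-c\mu)^s z^t$ divided by $(c\tau+d)^{r+s+t}$. The multinomial and $\Gamma$ factors carry over unchanged, and together these give exactly $c_g(t,s,r,n)$.

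Finally I combine the prefactors. We have $\beta^{k-n}(g)=(c\tau+d)^{-(k-n)}$, and $\alpha_{\ml}(g)\,\alpha_{\mr}(g)=\alpha_m(g)$ because the exponent is linear in the index. All of these scalar functions sit to the left of $\ddel^{n-p}$, which yields the stated formula with denominator $(c\tau+d)^{r+t+s+k-n}$. The convention $0^0=1$ covers the cases $c=0$ or $d\lambda-c\mu=0$.

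The main obstacle is justifying $\ddel^nA=A(\ddel+\ell)^n$ for negative $n$ in the formal completion. It is also where one must confirm that \cref{triplesum} applies with $\tau$-dependent coefficients. Both follow from the automorphism and inverse argument above.
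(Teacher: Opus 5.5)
Your proposal is correct and follows the paper's own proof. In both, $\alpha_{\mr}(g)$ is moved past $\ddel^n$ via $\ddel^n\alpha_{\mr}=\alpha_{\mr}(\ddel+c_1z+c_2)^n$, with the same $c_1=-4\pi i\mr c/(c\tau+d)$ and $c_2=4\pi i\mr(d\lambda-c\mu)/(c\tau+d)$; then \cref{triplesum} is applied and the prefactors are collected into $\alpha_m(g)/(c\tau+d)^{k-n}$. Your conjugation argument for negative $n$, and your check that \cref{triplesum} still holds when $c_1,c_2$ depend on $\tau$ (they are killed by $\del/\del z$), justify two steps the paper uses without comment.
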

\begin{proof}
To express 
$\phi\, \ddel^n\, |\, g
=\beta^{k-n}(g)\, \alpha_{\ml}(g)\, (\phi\circ g)\, 
\ddel^n \alpha_{\mr}(g)$ with $\ddel$ 
to the far right, we 
determine the commutator of $\ddel^n$ and the 
Jacobi cocycle $\alpha_{\mr}:=\alpha_{\mr}\big(g, (\tau,z)\big)$.
Note that
\begin{gather*}
\tfrac{\del}{\del z}\alpha_{\mr}
= \alpha_{\mr}\cdot (c_1z + c_2)
\end{gather*}
for some $c_1,c_2\in \CC$ depending
on $g$, $\tau$, and $\mr$; in fact,
\begin{gather*}
c_1=4\pi i\mr \ \tfrac{-c}{c\tau+d}\, 
\quad\text{and}\quad
c_2=4\pi i \mr\ \tfrac{d \lambda - c\mu}{c\tau+d}\ \, .
\end{gather*}
Thus in the algebra $\po$
(with the noncommutative multiplication
of \cref{eq:multiplication-of-pseudo-differential-operators}),
\begin{gather*}
\ddel\cdot \alpha_{\mr} 
= \alpha_{\mr}\cdot\ddel + \tfrac{\del}{\del z}(\alpha_{\mr})
=\alpha_{\mr}\cdot(\ddel + c_1 z + c_2)
\end{gather*}
and
$$
\ddel^n\cdot \alpha_{\mr}
= \alpha_{\mr}\cdot (\ddel+c_1z+c_2)^n\ .
$$ 
\cref{triplesum}
gives the advertised formula
using the fact that $(c\tau + d)$ and $\ddel$ commute.
\end{proof}

Recall the filtration 
$\po = \bigcup_{k} \po_{k}$ (see \cref{Filtration})
with subscript $k$ indicating the order, i.e., highest power $k$ on $\ddel$ that may appear
with nonzero coefficient.
    
\begin{cor}\label{GroupActionLifts}
Fix $k\in \ZZ$ and $\ml, \mr\in\CC$ 
with $m=\ml+\mr$.
For any $\phi$ in $C^{\infty}(\HJ)$,
$$
\phi\,  \ddel^{-k}\ \big|_{0,\mlr} \ g
\  = \ 
\big( \phi\ \big|_{k,m}\ g\big) \ \ddel^{-k}
\qquad{\rm modulo}\quad
\po_{-k-1}
\, .
$$
    
\end{cor}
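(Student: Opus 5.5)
Apply \cref{cor:action-on-pseudo-differential-operators} with weight parameter $0$ and $n=-k$, and keep only the term with $p=0$. Every term with $p\geq 1$ carries $\ddel^{-k-p}$ with $-k-p\leq -k-1$. Its coefficient $\alpha_m(g)\,(c\tau+d)^{-(r+t+s)-0+n}\, z^t\,(\phi\circ g)$ is smooth on $\HJ$, because $c\tau+d\neq 0$ there. So the whole sum of these terms is a (possibly infinite) series of the form $\sum_{j\le -k-1}\psi_j\ddel^{j}$ with $\psi_j\in C^\infty(\HJ)$. For each fixed $p$ the inner sum over $t+s+2r=p$ is finite, so each coefficient $\psi_j$ is a well-defined smooth function. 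Hence this part lies in $\po_{-k-1}$.

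For $p=0$ we are forced to take $t=s=r=0$. The multinomial coefficient is then $\sbinom{n}{0,0,0,n}=1$ by \eqref{multinomial} with $p=0$ (empty product), and the Gamma ratio $2^0\Gamma(\tfrac12)/\Gamma(\tfrac12)$ equals $1$. The factor $(4\pi i\mr)^0(-c)^0(d\lambda-c\mu)^0$ equals $1$ using the stated convention $0^0=1$. So $c_g(0,0,0,-k)=1$, and the $p=0$ term is
\[
\frac{\alpha_m\big(g,(\tau,z)\big)}{(c\tau+d)^{0-(-k)}}\,(\phi\circ g)\,\ddel^{-k}
=\beta^{k}(g)\,\alpha_m(g)\,(\phi\circ g)\,\ddel^{-k}.
\]
By the definition of the slash operator in \cref{GroupActions}, this is exactly $\big(\phi\,|_{k,m}\,g\big)\,\ddel^{-k}$, which proves the claim.

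Alternatively, one can argue directly from \cref{action}, which may be cleaner. We have $\phi\ddel^{-k}|_{0,\mlr}g=\beta^{k}(g)\alpha_{\ml}(g)(\phi\circ g)\,\ddel^{-k}\alpha_{\mr}(g)$. Applying the product rule \eqref{eq:multiplication-of-pseudo-differential-operators} gives $\ddel^{-k}\alpha_{\mr}=\alpha_{\mr}\ddel^{-k}+(\text{order}\le -k-1)$, since only the $r=0$ summand keeps order $-k$. Then $\alpha_{\ml}\alpha_{\mr}=\alpha_m$ because the exponent in $\alpha_m$ is linear in $m$.

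The only point needing care is the bookkeeping of the exponent of $(c\tau+d)$. In \cref{cor:action-on-pseudo-differential-operators} the weight parameter is the one appearing in the slash action, here $0$, while $n=-k$; this gives the power $\beta^{k}$, matching weight $k$ on the right-hand side. This sign convention is what makes the weight shift from $0$ to $k$ work, so I expect it to be the main step to verify, though it is straightforward.
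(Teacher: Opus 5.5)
Your proposal is correct and follows the paper's approach. The paper gives no separate proof and treats the statement as an immediate consequence of \cref{cor:action-on-pseudo-differential-operators}: with weight parameter $0$ and $n=-k$, the $p=0$ term is $\beta^{k}(g)\,\alpha_m(g)\,(\phi\circ g)\,\ddel^{-k}$, and all other terms lie in $\po_{-k-1}$. Your alternative argument, via the leading term of $\ddel^{-k}\alpha_{\mr}$ together with $\alpha_{\ml}\alpha_{\mr}=\alpha_m$, is equally valid.
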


\section{An equivariant splitting of filtered modules}
\label{sec:algebraic-splitting}

Throughout this section, we fix a field $\mathbb{F}$
of characteristic zero
and establish a general observation
on filtered modules whose
graded versions decompose into eigenspaces
with distinct eigenvalues for some operator.
We take $R$ to be a commutative ring with unity
in this section.

\subsection{Filtered and associated
graded modules}
Recall that a module $M$ over the (ungraded) ring $R$
is $\ZZ$-filtered when
$M=\cup_{i\in \ZZ} M_i$ with each $M_i$ an $R$-submodule of $M$ with $M_i\subset M_{i+1}$ 
for all $i$:
$$
\ldots \ \subset\ M_{i-2}\ \subset\ M_{i-1}\ \subset\ M_i\ \subset\ M_{i+1}\ \subset\ \ldots 
$$
The associated graded $R$-module
is $$
\Gr M = 
\bigoplus_{i\in \ZZ} \ (\Gr M)_i
\qquad\tx{ for }\qquad 
(\Gr M)_i = M_i/M_{i-1},
\quad i\in \ZZ
\, 
$$
with $R$-action given by $r(m+M_{i-1})=rm+M_{i-1}$
for $r$ in $R$ and $m$ in $M$.  
We write $\pi_i: M_i\rightarrow (\Gr M)_i$
for the canonical projection onto the $i$-th component of the graded module.

\subsection{Complete Filtrations}
\label{CompleteFiltrations}
For a $\ZZ$-filtered $R$-module $M=\cup_{i} M_i$, we consider the usual projective limit,
${\rm proj\ lim}_{i}\ M/M_i
\subset \prod_i (m+M_i)$,
with transition maps 
$M/M_j\rightarrow M/M_i$, 
$m+M_j\mapsto m+M_i$, for $M_j\subset M_i$.
We say the filtration is {\em complete}
over $R$
if the map
\begin{equation}\label{CompleteMap}
M \longrightarrow 
{\rm proj\ lim }_i\ 
(M/M_i)
\, ,
\quad
m\longmapsto \prod_i \ (m +M_i)
\end{equation}
is an $R$-module isomorphism
(see~\cite{AtiyahMacdonald} and~\cite{Weibel}).
We use the next example in 
the proof of \cref{SplittingExists}.

\vspace{2ex}

\begin{example}\label{ExampleCompleteFiltration}{\em 
Consider Laurent series
in the variable $t^{-1}$ for $t$ a formal symbol:
$$
M=R\llp t^{-1} \rrp
=
\Big\{ \ \sum_{-\infty < n \ll \infty} r_n t^{n} : r_n\in R \ \Big\}.
$$
The natural $\ZZ$-filtration is complete  over $R$ 
with $i$-th filtered component the $R$-module
$$
M_i
=
\Big\{ \ \sum_{-\infty < n \le i} r_n t^{n}:r_n\in R \ \Big\}
\qquad\quad\tx{for $i\in \ZZ$}
$$
(so $M_i\subset M_{i+1}$ for all $i$).
Indeed,
$M=\bigcup_i M_i$,
and the product of projection maps
$$
f:\ M\ \longrightarrow\ \prod_i M/M_i,
\quad m\ \longmapsto\ \prod_i m+M_i
\qquad\tx{
given by }\quad
\sum_{n\leq k}r _n t^{n}
\ \longmapsto\  
\prod_i\ 
\Big(\sum_{n\leq k} r_nt^n
+ M_i \Big)
\, ,$$
defines an $R$-module homomorphism 
to the projective limit,
$f: M \rightarrow {\rm proj \ lim}_i \ M/M_{i}$,
which
is injective as $\bigcap_i M_i=0$.
In fact, if $M$ is a module over another ring $S$ 
preserving the filtration, i.e., with
each $M_i$ an $S$-module, then each $M/M_i$ is an $S$-module
and $f$ is also an $S$-module homomorphism, $$f(s m)=\prod_i (sm+ M_i)=s\prod_i (m+M_i)
=s f(m)\, .
$$
Hence the filtration on $M$ is complete
over $S$ as well.
Note that for any fixed $N$ in $\ZZ$,
the induced filtration on $M_N=\cup_{i\leq N}M_i$ 
is also complete
(one may set $(M_N)_i=M_i$ for $i\leq N$
and $(M_N)_i=M$ for $i>N$).
} 
\end{example}

\vspace{2ex}

In the next two propositions, we use
the term ``eigenvalues" loosely
and include scalars
by which $x$ may act on spaces
even when those spaces are zero.

\begin{proposition}
\label{prop:algebraic-splitting}
Let $M=\bigcup_i M_i$ 
be a $\ZZ$-filtered 
$\mathbb{F}[x]$\nbd module 
with filtration complete.
Suppose $x$ acts by a scalar
$\lambda_i$ in $\mathbb{F}$
on each component $(\Gr M)_i$ of the associated graded module with the
eigenvalues $\lambda_i$ for  $i \in \ZZ$
mutually distinct.
Then for every $i$, there is a splitting~$\sigma_i$ to~$\pi_i$,
i.e., an $\mathbb{F}[x]$-module homomorphism
$\sigma_i:(\Gr M)_i\rightarrow M_i$
with $\pi_i\circ \sigma_i =\operatorname{Id}$:
\begin{equation*}
\begin{tikzcd}[column sep=7ex]
0\arrow{r}
& M_{i-1}
\arrow{r}
& \ \ \ M_i
\arrow{r}[swap]{\pi_i}
&
(\Gr M)_i
\arrow{r}
\arrow[dotted, bend right]{l}[swap]{\mbox{$\sigma_i$}}
&0\ .
\end{tikzcd}
\end{equation*}

\end{proposition}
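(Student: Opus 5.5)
The plan is to build $\sigma_i$ from a spectral projector. Interpolation polynomials in $x$ handle each finite quotient, and completeness lets us pass to the limit. Fix $i$ and a class $v\in(\Gr M)_i$, and choose any lift $m\in M_i$ with $\pi_i(m)=v$. We want to modify $m$ by elements of $M_{i-1}$ so that the result is an eigenvector for $x$ with eigenvalue $\lambda_i$. Since $(x-\lambda_j)$ maps $M_j$ into $M_{j-1}$ (because $x$ acts by $\lambda_j$ on $M_j/M_{j-1}$), the natural candidate is
\[
\sigma_i(v) \;=\; \lim_{N\to\infty}\ \prod_{j=i-N}^{i-1}\frac{x-\lambda_j}{\lambda_i-\lambda_j}\; m .
\]
The denominators are nonzero because the eigenvalues are distinct.

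\textbf{Step 1 (finite stages).} Set $P_N=\prod_{j=i-N}^{i-1}(x-\lambda_j)/(\lambda_i-\lambda_j)\in\mathbb{F}[x]$. We check three facts. First, $P_N m\equiv m \pmod{M_{i-1}}$: modulo $M_{i-1}$, $x$ acts on $m$ by $\lambda_i$, so each factor acts as $1$. Second, $(x-\lambda_i)P_N m\in M_{i-N-1}$. Indeed, $(x-\lambda_i)m\in M_{i-1}$, and applying $(x-\lambda_{i-1}),\dots,(x-\lambda_{i-N})$ successively pushes it down one filtration step each time; the factors commute since they lie in $\mathbb{F}[x]$. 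Third, $P_{N+1}m-P_Nm\in M_{i-N-1}$. We can write $P_{N+1}-P_N=P_N\cdot\big((x-\lambda_{i-N-1})/(\lambda_i-\lambda_{i-N-1})-1\big)=P_N(x-\lambda_i)/(\lambda_i-\lambda_{i-N-1})$, so this reduces to the second fact.

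\textbf{Step 2 (passing to the limit via completeness).} By the third fact, the classes $P_Nm+M_{i-N-1}$ form a compatible system in $\mathrm{proj\,lim}_j\, M/M_j$. For indices $j\ge i$ we use the constant class $m+M_j$, which is compatible by the first fact. Completeness \eqref{CompleteMap} then gives a unique $\tilde m\in M$ with $\tilde m\equiv P_Nm \pmod{M_{i-N-1}}$ for all $N$; in particular $\tilde m\in M_i$ and $\pi_i(\tilde m)=v$. Since $x$ preserves each $M_j$, $(x-\lambda_i)\tilde m\equiv(x-\lambda_i)P_Nm\equiv0$ modulo $M_{i-N-1}$ for every $N$. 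Injectivity in \eqref{CompleteMap}, i.e.\ $\bigcap_j M_j=0$, gives $x\tilde m=\lambda_i\tilde m$.

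\textbf{Step 3 (well-definedness and linearity).} The key point is that the eigenvector lift is unique. Let $u\in M_{i-1}$ with $xu=\lambda_i u$. If $u\ne0$, then $u\notin M_{j}$ for some $j$ by $\bigcap_j M_j=0$. Let $j$ be the largest index with $u\in M_j\setminus M_{j-1}$; such a $j$ exists because $u\in M_{i-1}$, and $j\le i-1$. Then $x$ acts on $u+M_{j-1}\ne0$ by both $\lambda_j$ and $\lambda_i$, contradicting $\lambda_j\ne\lambda_i$. So $u=0$. Hence $\sigma_i(v):=\tilde m$ does not depend on the lift $m$, and it is additive and $\mathbb{F}$-linear, since sums and scalar multiples of eigen-lifts are eigen-lifts. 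It commutes with $x$ because $x$ acts on $(\Gr M)_i$ by $\lambda_i$ and on the image by $\lambda_i$, so $\sigma_i$ is an $\mathbb{F}[x]$-homomorphism with $\pi_i\sigma_i=\mathrm{Id}$.

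\textbf{Main obstacle.} The delicate step is Step 2. We must verify that the sequence really defines an element of the projective limit compatible at every level, including $j\ge i$, and then use completeness correctly for both existence and the eigen-equation. The uniqueness argument in Step 3 is what will later upgrade $\sigma_i$ to the $G$-equivariant map in \cref{cor:algebraic-Gsplitting}: if $G$ commutes with $x$, then $g\sigma_i(g^{-1}v)$ is another eigen-lift of $v$ and hence equals $\sigma_i(v)$.
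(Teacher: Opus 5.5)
Your proposal is correct and is essentially the paper's proof: the paper uses exactly your factors $(x-\lambda_j)/(\lambda_i-\lambda_j)$, written as $\mathrm{Id}-\lambda_j^{-1}x$ after normalizing $\lambda_i=0$, composes them as maps $M/M_j\to M/M_{j-1}$ on quotients, and then invokes completeness. Because the paper works with classes in the quotients rather than with a chosen lift, independence of the lift and $\mathbb{F}[x]$-linearity come for free there; you instead obtain them from the eigen-equation together with uniqueness of eigen-lifts, which is the same argument the paper uses for uniqueness in the following proposition.
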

\begin{proof}
Fix some integer $\ell$ 
and without loss of generality assume $\lambda_{\ell}=0$
(else replace $x$ by $x - \lambda_\ell$).
For each $j\neq \ell$,
the composition of
the map
$(\text{Id}- \lambda_j^{-1} x):M\rightarrow M $
with projection onto $M/M_{j-1}$
defines an $\mathbb{F}[x]$-module homomorphism
whose kernel
contains $M_j$ since $x$ acts on $M_j/M_{j-1}$
by the eigenvalue $\lambda_j$
(nonzero as the eigenvalues of $x$ are distinct):
$$ m \longmapsto 
m-\lambda_j^{-1}xm+M_{j-1}
=m-\lambda_j^{-1}\lambda_j m+M_{j-1}
=0
\qquad\text{ for }\quad m\in M_j\, .
$$
We obtain an $\mathbb{F}[x]$-module homomorphism
$$
f_{j}: M/M_j\longrightarrow M/M_{j-1},
\quad
m+M_j \longmapsto (\text{Id} - \lambda_j^{-1}x)m + M_{j-1}
\qquad\text{ for each }
j\neq  \ell\, .
$$
For each $z\in M_{\ell}/M_{\ell-1}$, 
define 
$z_j=0$ for $j \geq \ell$,
$z_{\ell-1}=z$, and 
$$
z_j= f_{j+1}  \cdots f_{\ell-1}(z) \in M/M_{j}
\qquad\text{ for } j< \ell-1
\, .
$$
We claim $\prod_{j} z_j$ 
in $\prod_{j} M/M_{j}$ 
lies in the 
projective limit $\varprojlim_j M/M_j$.
Indeed,
$$x z_{j}= x f_{j+1}\cdots f_{\ell-1}z
=f_{j+1}\cdots f_{\ell-1}xz 
=f_{j+1}\cdots f_{\ell-1} \lambda_{\ell}z =0
\qquad\text{ 
for $j<\ell-1$}
$$
as each $f_i$ is $\mathbb{F}[x]$-linear, so  
if $z_{j}=m+M_{j}$ in $M/M_j$
for $m$ in $M$ and $j<\ell-1$, then
$$ z_{j-1}=f_{j}(z_{j})
= (\text{Id} - \lambda_{j}^{-1}x)m+M_{j-1}
$$
is sent under the transition morphism
$M/M_{j-1}\rightarrow M/M_{j}$
defining the projective limit to 
$$(\text{Id} - \lambda_j^{-1}x)m+M_{j}
= z_j - \lambda_j^{-1} x z_j
= z_{j} - 0
\,  
$$
with $z_{\ell-1}$ sent to $z_{\ell}=0$.

We define $\sigma_{\ell}(z)$ as the preimage
of
$\prod_{j} z_j$ 
under the $\mathbb{F}[x]$-isomorphism
$ M\  \xrightarrow{\ \cong\ }\  \varprojlim_j M/M_j$
of \cref{CompleteMap},
$ m \mapsto \prod_j m+M_j$.
Then $0=z_\ell=\sigma_{\ell}(z)+M_{\ell}$
so $\sigma_{\ell}(z)$ lies in $M_{\ell}$
and
$\pi_\ell \sigma_{\ell}(z)=
\sigma_{\ell}(z)+M_{\ell-1}
=z_{\ell-1}=z$.
Thus $\sigma_{\ell}: M_{\ell}/M_{\ell-1}
\rightarrow M$ is an $\mathbb{F}[x]$-module homomorphism (as it is given by the composition
of $\mathbb{F}[x]$-module maps)
with $\pi_\ell \, \sigma_{\ell}$ the identity.
\end{proof}

In the next corollary,
we consider the group algebra $\mathbb{F}[G]$
of a group $G$ and take
$\mathbb{F}[G][x]$ to be ungraded
(i.e., graded with every element
of degree $0$).

\begin{prop}
\label{cor:algebraic-Gsplitting}
Suppose $M$ is a $\ZZ$-filtered
$\mathbb{F}[G][x]$-module
for a group $G$
with filtration complete
over $\mathbb{F}[x]$.
Say $x$ acts by a scalar $\lambda_i$
in $\mathbb{F}$ 
on each $(\Gr M)_i$ with the
eigenvalues $\lambda_i$ for  $i \in \ZZ$
mutually distinct.
Then for every $i$, there is a unique $\mathbb{F}[G][x]$-module
splitting $\sigma_i$ to~$\pi_i$,
\begin{equation*}
\begin{tikzcd}[column sep=7ex]
0\arrow{r}
& M_{i-1}
\arrow{r}
& \ \ M_i
\arrow{r}[swap]{\pi_i}
&
(\Gr M)_i
\arrow{r}
\arrow[dotted, bend right]{l}[swap]{\mbox{$\sigma_i$}}
&0\ .
\end{tikzcd}
\end{equation*}
In particular, $\sigma_i$ is $G$-equivariant,
i.e.,
for all $g$ in $G$, the following diagram is commutative:
\begin{gather*}
\begin{aligned}
&M_i& &\xleftarrow{\quad\ \sigma_i\quad\ }& &\Gr(M)_i& \\
& g \Big\downarrow&
&&
&\quad
\Big\downarrow g & \\
&M_i& &\xleftarrow{\quad\ \sigma_i\quad\ }& &\Gr(M)_i& 
\, .
\end{aligned}
\end{gather*}
\end{prop}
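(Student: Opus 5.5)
Existence comes from \cref{prop:algebraic-splitting}, uniqueness from a short argument using the distinct eigenvalues, and $G$-equivariance from uniqueness. Fix $i$ and take the $\mathbb{F}[x]$-module splitting $\sigma_i:(\Gr M)_i\rightarrow M_i$ given by \cref{prop:algebraic-splitting}. This applies because $M$ is in particular an $\mathbb{F}[x]$-module whose filtration is complete over $\mathbb{F}[x]$. Here I read the hypothesis ``$\mathbb{F}[G][x]$-module'' as saying that the action of $G$ commutes with $x$ and preserves each $M_j$. The short exact sequence in the statement is then a sequence of $\mathbb{F}[G][x]$-modules, and $(\Gr M)_i$ inherits a $G$-action.

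\emph{Uniqueness.} Suppose $\sigma,\sigma'$ are two $\mathbb{F}[x]$-linear splittings of $\pi_i$. Then $\delta=\sigma-\sigma'$ is $\mathbb{F}[x]$-linear with image in $\ker\pi_i=M_{i-1}$. Take $z\in(\Gr M)_i$. Then $(x-\lambda_i)z=0$, so $(x-\lambda_i)\delta(z)=0$.

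Let $m=\delta(z)\in M_{i-1}$. If $m\neq 0$, completeness gives $\bigcap_j M_j=0$, so there is a largest $j\le i-1$ with $m\in M_j\setminus M_{j-1}$. Its image $\bar m$ in $(\Gr M)_j$ is nonzero and satisfies $x\bar m=\lambda_j\bar m$. On the other hand, $(x-\lambda_i)m=0$ gives $(\lambda_j-\lambda_i)\bar m=0$. Since $\lambda_j\neq\lambda_i$, this forces $\bar m=0$, a contradiction. Hence $\delta=0$, and the $\mathbb{F}[x]$-linear splitting is already unique.

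\emph{Equivariance.} For $g\in G$, define $\sigma^g:=g^{-1}\circ\sigma_i\circ g$ on $(\Gr M)_i$. Because $g$ commutes with $x$ and preserves $M_i$ and $M_{i-1}$, the map $\sigma^g$ is $\mathbb{F}[x]$-linear with values in $M_i$. Because $\pi_i$ is $G$-equivariant, $\pi_i\sigma^g=g^{-1}\pi_i\sigma_i g=\operatorname{Id}$. By uniqueness $\sigma^g=\sigma_i$, i.e.\ $g\sigma_i=\sigma_i g$, which is exactly commutativity of the diagram. Thus $\sigma_i$ is an $\mathbb{F}[G][x]$-module homomorphism. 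Uniqueness among $\mathbb{F}[G][x]$-splittings follows since these are in particular $\mathbb{F}[x]$-splittings.

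The main point to watch is the uniqueness step, specifically that a nonzero $m$ has a well-defined leading filtration index. This relies on $\bigcap_j M_j=0$, which follows from injectivity of \eqref{CompleteMap} in the completeness hypothesis. I would state this explicitly rather than rely on it implicitly.
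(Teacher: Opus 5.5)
Your proof is correct and uses the same key step as the paper: a nonzero element of $M_{i-1}$ on which $x$ acts by $\lambda_i$ would have nonzero image in some $(\Gr M)_\ell$ with $\ell<i$, where $x$ acts by $\lambda_\ell\neq\lambda_i$. The difference is only in organization. The paper runs this argument twice, once on $g\sigma_i-\sigma_i g$ to get equivariance and once on $\sigma_i-\sigma_i'$ for uniqueness. You run it once to get uniqueness among all $\mathbb{F}[x]$-splittings, then deduce equivariance because $g^{-1}\sigma_i g$ is another such splitting; this is slightly cleaner and states the stronger uniqueness explicitly.
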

\begin{proof}
 Note that the action of $G$ and the action of $x$ both preserve
the filtration with actions commuting:
$xg=gx$ as maps on each $M_i$
for all $g$ in $G$.
Fix $i$ and take the $\mathbb{F}[x]$-module splitting
$\sigma_i:(\Gr M)_i\rightarrow M_i$
of \cref{prop:algebraic-splitting}.
To show that~$\sigma_i$ is $G$-equivariant,
we note that the actions of $x$ and
$g$ in $G$ commute with 
the projection map 
$\pi_i: M_i\rightarrow 
(\Gr M)_i$ with $\pi_i \circ \sigma_i=\text{Id}$, 
giving a commutative diagram
for $g$ in $G$
\begin{gather*}
\begin{aligned}
&M_i& &\xleftarrow{\quad\ \sigma_i\quad\ }& &\Gr(M)_i& \\
& g \Big\downarrow&
&&
&\quad
\Big\downarrow g & \\
&M_i& &\xrightarrow{\quad\ \pi_i
\quad\ }& &\Gr(M)_i\ . & 
\end{aligned}
\end{gather*}
Thus for any $z$ in $(\Gr M)_i$,
the element
$(g\sigma_i- \sigma_i g)(z)$
in $M_i$ projects to zero in $(\Gr M)_i$
under $\pi_i$.
Suppose $(g\sigma_i - \sigma_i g)(z)$
were nonzero.  
Then it would lie in 
the filtered component $M_{\ell}$ of $M$ 
for some $\ell<i$  minimal
as the filtration is complete
(so $\bigcap_i M_i=0$).
Then $\pi_\ell(g\sigma_i - \sigma_i g)(z)$
would be nonzero in $(\Gr M)_\ell$.
But since the actions of
$x$ and $g$ commute
and the splitting $\sigma_i$
is an $\mathbb{F}[x]$-map,
this would imply that
$$
\begin{aligned}
x\ (g\sigma_i - \sigma_i g)(z)
=(g\sigma_i - \sigma_i g)(x\, z)
=\lambda_i\, (g\sigma_i - \sigma_i g)(z)
\end{aligned}
$$
and hence
$$
\lambda_i\ \pi_\ell (g\sigma_i - \sigma_i g )(z) 
=
\pi_\ell\left( x\ (g\sigma_i - \sigma_i g)\right)(z)
=
x\ \pi_\ell(g\sigma_i - \sigma_i g)(z)
=
\lambda_\ell\
\pi_\ell(g\sigma_i - \sigma_i g)(z)
$$
which is impossible 
as $\lambda_\ell\neq \lambda_{i}$.
Thus $(g\sigma_i - \sigma_i g)(z)=0$ and
each $\sigma_i$ is $G$-equivariant.

A similar argument verifies that $\sigma_i$ is unique: If
$\sigma_i'$ is another $\mathbb{F}[G][x]$-splitting,
then 
$\pi_i(\sigma_i-\sigma_i')(z)=0$
for nonzero $z$ in $(\Gr M)_i$,
and $(\sigma_i-\sigma_{i}')(z)$ lies in $M_{\ell}$
for some $\ell<i$ with $\ell$ minimal. 
Then 
$x( \sigma_i-\sigma_i')(z)$ is just 
$(\sigma_i-\sigma_i')(xz)
=\lambda_i (\sigma_i-\sigma_i')(z)
$
and thus
$( \sigma_i-\sigma_i')(z)$ must vanish
as
\begin{equation*}
\lambda_i\ \pi_\ell(\sigma_i-\sigma_i')(z)
=
\pi_\ell\ \big(x(\sigma_i-\sigma_i')\big)(z)
=
x \ \pi_\ell(\sigma_i-\sigma_i')(z)
=\lambda_\ell\ \pi_\ell(\sigma_i-\sigma_i')(z)
\, .
\qedhere
\end{equation*}
\end{proof}

\vspace{2ex}

\begin{remark}{\em 
\label{BoundedFiltrations}
\cref{prop:algebraic-splitting}
and
\cref{cor:algebraic-Gsplitting}
both extend to modules with filtration
bounded above.  Indeed,
if the eigenvalues are distinct
for $i<N$, then there is a splitting
map for $i<N$ as in the conclusion
of the two propositions.
}
\end{remark}


\section{Raising and lowering operators
and a Casimir operator}
\label{sec:casimir-operator}

We will construct a linear map
$$\tree:\holos\longrightarrow \poh$$
explicitly in the next section
using the action of a Casimir operator on $\poh$
defined in this section.
We first introduce raising and lowering operators on $\po$
and then define our Casimir operator as a composition of these
operators.
In~\cref{FindingCasimir},
we use analysis of the complexified Lie algebra
$\frak{g}$ of $G$ to explain
our construction of these operators
and of the Casimir operator $\cC_{k,\mlr}$ on $\po$.

\subsection{A Casimir on $\HJ$}
\label{BoringCasimir}
Berndt and Schmidt~\cite{BerndtSchmidt} 
identified 
a Casimir operator on $\HJ$
(see also
Pitale~\cite{Pitale}).
We consider the {Casimir operator} $\mathcal{C}_{k,m}: C^{\infty}(\HJ)
\rightarrow C^{\infty}(\HJ)$
constructed by
Conley and Raum~\cite[Theorem~2.4]{ConleyRaum}
for $k\in \ZZ$ and $0\neq m\in \CC$ 
(see \cref{Casimir-Top-Piece})
and restrict to holomorphic functions:
$$
\text{\em the Casimir operator}\quad
\mathcal{C}_{k,m}:\ \ \holos\rightarrow \holos
\quad\text{ 
acts by the scalar $\quad$
$2\pi i m(k^2-3k)$.}
$$
We also use the standard
{heat operator}
from~\cite[p.~32]{EichlerZagier}:
$$
\text{\em the heat operator}\quad
\quad 
\bbL_{m}
:\ \  \holos\rightarrow \holos
\qquad\text{ is defined as }
\quad
\bbL_{m}=8\pi i m \tfrac{\del}{\del \tau}
- \tfrac{\del^2}{\del z^2}\, .
$$
We extend
$\bbL_{m}$ to $m=0$.
See~\cite{AthreyaLagaceMollerRaum}
for a Casimir operator when $m=0$.

\subsection{Raising and lowering operators}

Berndt and Schmidt~\cite{BerndtSchmidt}
use techniques from reductive groups
to analyze the Jacobi group $G$,
although it is not reductive,
defining raising and lowering operators
on $C^{\infty}(\HJ)$.
We introduce related raising and lowering operators on
the space of pseudodifferential forms.
We write $\phi_\tau$, $\phi_z$, $\phi_{\ov \tau}$, $\phi_{\ov z}$
for the partial derivative of $\phi$ in $\CHJ$
with respect to $\tau, z, \ov \tau, \ov z$, respectively,
where for $\tau=x+iy$ and $z=u+iv$,
\[
\begin{aligned}
&
\tfrac{\del}{\del \tau}
=
\tfrac{1}{2}\big(\tfrac{\del}{\del x} - i \tfrac{\del}{\del y}\big),
&
\quad
&
\tfrac{\del}{\del \ov\tau}
=
\tfrac{1}{2}\big(\tfrac{\del}{\del x} + i \tfrac{\del}{\del y}\big),
&
\quad
&
\tfrac{\del}{\del z}
=
\tfrac{1}{2}\big(\tfrac{\del}{\del u} - i \tfrac{\del}{\del v}\big),
&
\quad
&
\tfrac{\del}{\del \ov z}
=
\tfrac{1}{2}\big(\tfrac{\del}{\del u} + i \tfrac{\del}{\del v}\big)\ . \\
&
\end{aligned}
\]

We justify the following definition
of covariant differential operators
in \cref{FindingCasimir}
(see \cref{AltBasisIsRaisingLowering}).
Here,
the superscript of $J$ indicates
origin with respect to 
the elliptic variable $z$
as opposed to the modular variable $\tau$.
We scale the operators to match 
generators of the corresponding Lie algebra $\mathfrak{g}$ from
\cite{ConleyRaum}.
\begin{definition}
\label{def:raisingandlowering}
{\em
Fix $k\in \ZZ$ and $\ml,\mr \in \CC$.
Define
\begin{gather*}
\begin{aligned}
\rule{0ex}{2.5ex}
&\text{{\em raising operators}}&
&\Rmod_{k,\mlr},  &&\Rell_{k,\mlr}:
&&\po\rightarrow\po, \\
\rule{0ex}{2.5ex}
&\text{{\em lowering operators}}&
&\Lmod_{k,\mlr}, &&\Lell_{k,\mlr}:
&&\po\rightarrow\po,
\quad\text{and}\\ 
\rule{0ex}{2.5ex}
&\text{{\em constant operators}}&
&\Cmod_{k,\mlr}, &&\Cell_{k,\mlr}:
&&\po\rightarrow\po
\end{aligned}
\end{gather*}
by
$$
\begin{aligned}
\Rmod
(\phi\, \ddel^n) 
&=
  \big(2 i 
  (
  \phi_\tau
  + \tfrac{v}{y}\, \phi_z
  + 2 i \pi m \tfrac{v^2}{y^2} \phi
  )
  + \tfrac{k - n}{y} \phi
  \big)\, \ddel^n 
  + 4 \pi i m_r \tfrac{n v}{y^2} \phi\, \ddel^{n-1}
  + \pi m_r \tfrac{n (n - 1)}{y^2} \phi\, \ddel^{n-2}
\text{,}
\\[.3em]
\Lmod
(\phi\, \ddel^n )\ 
&=
  - 2 i y \big(
  y \, \phi_{\ov \tau}
  + v \, \phi_{\ov z}
  \big)\, \ddel^n
  - \pi m_r n (n - 1) \phi\, \ddel^{n-2}
\text{,}
\\[.3em]
\Rell
(\phi\, \ddel^n)\ 
&=
  \big(i
  \phi_z
  - 4 \pi  m \tfrac{v}{y} \phi
  \big)\, \ddel^n
  + 2 \pi i m_r \tfrac{n}{y} \phi\, \ddel^{n-1}
\text{,}
\\[.3em] 
  \Lell
(\phi\, \ddel^n) \ 
&=
  -i y\, \phi_{\ov z}\ \ddel^n
  + 2 \pi i  m_r n\, \phi\, \ddel^{n-1}
\text{,}
\\[.3em]
\Cmod
(\phi\, \ddel^n)\  
&=
  (k - n) \phi\, \ddel^n
  + 4 \pi i m_r \tfrac{n v}{y} \phi\, \ddel^{n-1}
  + 2 \pi m_r \tfrac{n (n - 1)}{y} \phi\, \ddel^{n-2}\text{,}
\\[.3em]
\Cell
(\phi\, \ddel^n)\  
&=
  2 \pi i m \, \phi\, \ddel^n
  \rule[-3ex]{0ex}{2ex}
\end{aligned}
$$
for $\phi$ in $\CHJ$ and $n$ in $\ZZ$,
where $m=\ml+\mr$,
with subscripts $k, \mlr$ on operators suppressed.
}
\end{definition}

The raising, lowering, and constant operators
are covariant maps:
\begin{prop}
\label{cor:covariantoperators1}
For each $k\in \ZZ$ and $\ml,\mr\in \CC$,
the operators 
$$
\Rmod:=\Rmod_{k,\mlr},\quad
\Lmod:=\Lmod_{k,\mlr},\quad
\Rell:=\Rell_{k,\mlr},\quad
\tx{and }\ \ 
\Lell:=\Lell_{k,\mlr}
$$
are covariant,
and the operators $\Cmod:=\Cmod_{k,\mlr}$
and $\Cell:=\Cell_{k,\mlr}$ are invariant:  
\begin{gather*}
\begin{aligned}
&\Rmod\big(\psi  
\ \big|_{k,\mlr}\ g\big)
&=& \quad
\Rmod(\psi)  
\ \big|_{k+2,\mlr}\ g\, ,&\
&\Lmod\big(\psi  
\ \big|_{k,\mlr}\ g\big)
&=& \quad
\Lmod(\psi)
\ \big|_{k-2,\mlr}\ g\, ,&\\
&\Rell\big(\psi  
\ \big|_{k,\mlr}\ g\big)
&=& \quad
 \Rell(\psi)  
\ \big|_{k+ 1,\mlr}\ g\, ,&\
&\Lell\big(\psi  
\ \big|_{k,\mlr}\ g\big)
&=& \quad
\Lell(\psi)
\ \big|_{k-1,\mlr}\ g\, ,&\\
&\Cmod\big(\psi  
\ \big|_{k,\mlr}\ g\big)
&=& \quad
\Cmod(\psi) 
\ \big|_{k,\mlr}\ g \, ,&\ 
&\Cell\big(\psi  
\ \big|_{k,\mlr}\ g\big)
&=& \quad
\Cell(\psi) 
\ \big|_{k,\mlr}\ g \ & 
\end{aligned}
\end{gather*}
for all $g\in G$ and $\psi\in\po$.
\end{prop}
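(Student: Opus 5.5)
By linearity over $\CC$, and because each operator in \cref{def:raisingandlowering} is defined termwise on monomials $\phi\,\ddel^n$ and commutes with the passage to infinite sums, it suffices to check each covariance identity on a single monomial $\psi=\phi\,\ddel^n$. We will also reduce to a generating set of $G$. Each side of each identity is a right action of $G$ in $g$, since the slash operators $|_{k,\mlr}$ are right actions. So if an identity holds for $g_1$ and $g_2$, it holds for $g_1g_2$. It therefore suffices to treat the central elements $(I,0,\kappa)$, the Heisenberg translations $(I,(\lambda,\mu),0)$, the translations $\left(\left[\begin{smallmatrix}1&b\\0&1\end{smallmatrix}\right],0,0\right)$, the diagonal elements $\left(\left[\begin{smallmatrix}a&0\\0&a^{-1}\end{smallmatrix}\right],0,0\right)$, and the inversion $\left(\left[\begin{smallmatrix}0&-1\\1&0\end{smallmatrix}\right],0,0\right)$.

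The invariance of $\Cell$ is immediate: it multiplies by the constant $2\pi i m$, and the slash action is $\CC$-linear. For the other operators we proceed as follows.

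\textbf{Central elements.} These act by the scalar $e^{2\pi i m\kappa}$, and every operator is linear, so the identities hold.

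\textbf{Heisenberg translations.} Here $\beta=1$, $\alpha_{\mr}(g)=e(\mr(\lambda^2\tau+2\lambda z+\lambda\mu))$, and the coordinate change is $z\mapsto z+\lambda\tau+\mu$, under which $v\mapsto v+\lambda y$. We use \cref{cor:action-on-pseudo-differential-operators} with $c=0$. Then only $r=t=0$ survives, and the slash action reads
$$
\sum_{s}\tbinom{n}{s}(4\pi i\mr\lambda)^s\,\alpha_m\,(\phi\circ g)\,\ddel^{n-s}.
$$
We apply each operator to this expression and compare with the slash of the operator applied to $\phi\,\ddel^n$. For example, in $\Rell$ the term $-4\pi m\frac{v}{y}$ picks up the shift $-4\pi m\lambda$. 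This shift cancels against $\partial_z$ falling on $\alpha_m$, which contributes $i\cdot 4\pi i m\lambda$. The lower-order corrections $2\pi i\mr\frac{n}{y}\phi\,\ddel^{n-1}$ absorb the binomial shift, using $n\binom{n-1}{s}+\dots=\dots$ as Pascal-type identities. The verification for $\Rmod$, $\Lmod$, $\Lell$, and $\Cmod$ is analogous, with more terms coming from $v^2/y^2$.

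\textbf{Matrix generators.} The translations in $\tau$ are trivial, since nothing depends on $x$ explicitly. For the diagonal elements one checks homogeneity under the scaling $(\tau,z)\mapsto(a^2\tau,az)$. The real work is the inversion $S$. Here $\beta=\tau^{-1}$, $\alpha_{\mr}=e(-\mr z^2/\tau)$, and $y\mapsto y/|\tau|^2$. The function $v/y$ transforms as
$$
\frac{v}{y}\;\mapsto\;\frac{v\,\ov\tau-\dots}{y}\quad\text{(a real-analytic cocycle)},
$$
and $\ddel^n\alpha_{\mr}$ expands via \cref{triplesum} with $c_1=-4\pi i\mr/\tau$ and $c_2=0$.

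\textbf{Main obstacle.} We expect the hardest step to be this inversion computation for $\Rmod$ and $\Cmod$. The difficulty is that the $\ddel^{n-1}$ and $\ddel^{n-2}$ correction terms, with coefficients $4\pi i\mr\frac{nv}{y^2}$ and $\pi\mr\frac{n(n-1)}{y^2}$, must exactly compensate the non-holomorphic cross terms from $\partial_\tau$ acting on $\alpha_{\mr}(S)$ and on $\beta^{k-n}$. To organize this, we plan to conjugate by $\alpha_{\mr}$. Writing $\ddel^n\alpha_{\mr}=\alpha_{\mr}(\ddel+c_1z)^n$, the operators become derivations in the coefficient plus an operator acting on the formal symbol $(\ddel+c_1z)$. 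It then suffices to verify covariance on the two generators $\phi$ and $\ddel+c_1z$ of this structure, together with the Leibniz rule in $n$; the latter is polynomial in $n$, so it suffices to check it for $n\ge0$.

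An alternative fallback, if the direct check is unwieldy, is the Lie-algebra route indicated in the paper. There, the operators are images of generators of $\frakg$ under the derived action of $|_{k,\mlr}$ (\cref{FindingCasimir}). Covariance then follows from $\mathrm{Ad}$-equivariance of the chosen basis and the connectedness of $G$, with the central and discrete components handled as above.
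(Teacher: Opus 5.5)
Your proposal follows the paper's own proof. The paper likewise verifies the six identities by direct computation on generators of $G$, namely $\big(\big(\begin{smallmatrix}1&b\\0&1\end{smallmatrix}\big),(0,\mu),0\big)$ and the inversion $\big(\big(\begin{smallmatrix}0&-1\\1&0\end{smallmatrix}\big),(0,0),0\big)$, and it names the Lie-algebraic route via Conley--Raum and Appendix~A as the alternative; your reduction to monomials and generators, and your central, Heisenberg, translation and diagonal checks, match this. One caution for the inversion step, which neither you nor the paper writes out: $\Rmod$, $\Lmod$, $\Cmod$ contain the term $n(n-1)\phi\,\ddel^{n-2}$, the square of the derivation $\psi\mapsto[\psi,z]$, so they are not derivations and checking only the ``generators'' $\phi$ and $\ddel+c_1z$ does not suffice; expanding both sides with \cref{cor:action-on-pseudo-differential-operators} and comparing coefficients does.
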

\begin{proof}
Direct computation verifies the claim
using generators for $G$, 
for example, 
$$
\Big(\big(\begin{smallmatrix}1 & b\\0 & 1\end{smallmatrix}\big),
(0,\mu), 0 \Big),
\quad
\Big(\big(\begin{smallmatrix}0 & -1\\1 & 0\end{smallmatrix}\big),
(0,0), 0 \Big)
\qquad
\tx{for $b, \mu$ in $\RR$.}
$$
Alternatively, one may  
adapt the more theoretical
approach
of~\cite{ConleyRaum}
for finding covariant operators
to the setting of infinite-dimensional
$K$-vector bundles
for $K=\tx{stab}_G(i,0)$
using the fact that $G/K\cong \HJ$ is a reductive
coset space although $G$ is not
reductive,
see Appendix A.
\end{proof}

\vspace{2ex}

\begin{remark}{\em
\label{RaisingLowering}
\cref{cor:covariantoperators1} may be 
rephrased succinctly in terms
of {\em raising or lowering}
the weight $k$
giving the slash group
action $|_{k,\mlr}$ on $\po$
(compare with the operators of~\cite{BerndtSchmidt}):
$$
\begin{aligned}
&\Rmod  &&\ \tx{ raises by } 2,
\qquad
&&\Lmod  &&\ \tx{ lowers by } 2,
\\
&\Rell &&\  \tx{ raises by } 1, 
\qquad
&&\Lell &&\ \tx{ lowers by } 1,
\ \ \tx{and}
\\
&\Cmod  &&\  \tx{ preserves weight}  ,
\qquad
&&\Cell &&\  \tx{ preserves weight} 
.
\end{aligned}
$$
}
\end{remark}
\subsection{A Casimir operator}
We construct the Casimir operator
on $\po$ 
as a composition of degree $1$
operators.  These arise from
a convenient basis for 
the complexified Lie algebra
$\mathfrak{g}$ of $G$ 
and
a Casimir element in the 
center of 
the universal enveloping algebra
$\mathcal{U}(\mathfrak{g})$
identified in~\cite{ConleyRaum};
see Appendix~A.

\vspace{2ex}

\begin{definition}
\label{CasimirDefn}
{\em 
For $k\in \ZZ$ and $\ml,\mr\in \CC$ with~$m = \ml + \mr \ne 0$,
define a {\em Casimir operator}
($\CC$-linear map)
$$\mathcal{C}_{k,\mlr}:\ \ \po\longrightarrow\po$$
as this composition 
(applied left to right) of raising, lowering, and constant covariant
operators:
\begin{gather*}
\mathcal{C}_{k,\mlr}=
4\Cell\,
{\Rmod}\,\Lmod
+ 2i\, (\Rell)^2\,\Lmod
-2i\, \Rmod\,(\Lell)^2
+2i\, \Rell\,\Lell\,
(\Cmod-2) - 3 \Cell\,\Cmod + \Cell\,{\Cmod}^2
\, .
\end{gather*}
}
\end{definition}

\vspace{2ex}

\cref{cor:covariantoperators1}
implies that 
the Casimir operator on $\po$
is equivariant with respect
to the slash action:

\begin{theorem}
\label{CasimirEquivariant}
Fix $k\in \ZZ$ and $\ml,\mr\in \CC$ with~$m = \ml + \mr \ne 0$.
The Casimir operator
$\mathcal{C}_{k,\mlr}$ is $G$-equivariant
with respect to the 
slash action $\ |_{k, \mlr}$  on $\po$:
\begin{gather*}
\mathcal{C}_{k,\mlr}
\big( \psi \ \big|_{k, \mlr}\ \ g\big)
\ =\
\mathcal{C}_{k,\mlr}( \psi )
\ \ \big|_{k,\mlr}\ \ g
\end{gather*}
for all $g$ in $G$ and $\psi$ in $\po$.
\end{theorem}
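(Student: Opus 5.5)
The plan is to deduce the statement term by term from \cref{cor:covariantoperators1}, treating each summand of $\mathcal{C}_{k,\mlr}$ in \cref{CasimirDefn} as a chain of covariant maps between slash actions of varying weight. The subscripts suppressed in \cref{CasimirDefn} have to be read with the correct weights. When a composition is applied left to right to $\psi$ carrying the action $|_{k,\mlr}$, each operator in the chain is indexed by the weight reached so far. So $\Rmod\,\Lmod$ means $\Lmod_{k+2,\mlr}\circ\Rmod_{k,\mlr}$, and $(\Rell)^2\Lmod$ means $\Lmod_{k+2,\mlr}\Rell_{k+1,\mlr}\Rell_{k,\mlr}$. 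The first step is to make this bookkeeping explicit, so that every operator receives the weight matching the slash action on its input.

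Next I check that every summand returns to weight $k$, using \cref{RaisingLowering}. The weight changes along each chain are as follows:
\begin{itemize}
\item $\Rmod\Lmod$: $+2-2=0$;
\item $(\Rell)^2\Lmod$: $+1+1-2=0$;
\item $\Rmod(\Lell)^2$: $+2-1-1=0$;
\item $\Rell\Lell(\Cmod-2)$: $+1-1+0=0$;
\item $\Cell\Cmod$ and $\Cell\Cmod^2$: $0$, since $\Cmod$ and $\Cell$ preserve weight.
\end{itemize}
The subtraction of $2$ in $(\Cmod-2)$ is harmless, because the identity map is trivially equivariant for any fixed slash action.

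With the bookkeeping fixed, equivariance of each summand follows by induction along the chain. For example,
\[
\Lmod_{k+2}\Rmod_{k}\big(\psi|_{k,\mlr}g\big)=\Lmod_{k+2}\big(\Rmod_k(\psi)|_{k+2,\mlr}g\big)=\big(\Lmod_{k+2}\Rmod_k\psi\big)|_{k,\mlr}g .
\]
The same pattern applies to every other chain. Two further facts finish the argument. First, the scalar factors $4$, $\pm 2i$ and $-3$ commute with the slash action, since it is $\CC$-linear in $\psi$ (see \cref{action}). Second, a sum of equivariant maps from $|_{k,\mlr}$ to $|_{k,\mlr}$ is again equivariant. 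Together these give the theorem.

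The only point needing care, and the main potential obstacle, is confirming that the operators in \cref{CasimirDefn} are indeed meant with these shifted weight subscripts. Otherwise the covariance relations from \cref{cor:covariantoperators1} would not chain. The hypothesis $m\neq 0$ plays no role in equivariance and is needed only for later eigenvalue statements.
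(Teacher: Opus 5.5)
Your proposal is correct and is the paper's argument: the paper deduces the theorem in one line from \cref{cor:covariantoperators1}, chaining the covariance of the first-order operators through each summand of \cref{CasimirDefn}, each of which returns to weight $k$. One bookkeeping remark: the explicit formula in \cref{Casimir-Like-On-Smooth-Functions} (for instance the undifferentiated top term $2\pi i m((k-n)^2-3(k-n))\phi$, or the term $-8\pi m(1-2k+2n)y\,\phi_{\ov\tau}$) corresponds to ordinary right-to-left composition, $\Rmod\,\Lmod=\Rmod_{k-2,\mlr}\circ\Lmod_{k,\mlr}$, and not to your reading $\Lmod_{k+2,\mlr}\circ\Rmod_{k,\mlr}$, which gives a different top scalar; your chaining argument applies verbatim to that reading (weights $k\to k-2\to k$, and so on), so equivariance holds either way.
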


\subsection{Casimir operator explicitly}
We verify that the action of the Casimir operator
 on $\po$
 restricts to an action
 on $\poh$
 using the following
 formula for
$\cC_{k,\mlr}$
obtained by
straightforward calculation using
\cref{def:raisingandlowering} and 
\cref{CasimirDefn}.

\begin{prop}
\label{Casimir-Like-On-Smooth-Functions}
Fix $k\in \ZZ$ and $\ml,\mr\in \CC$ with~$m = \ml + \mr \ne 0$.
The Casimir operator $\cC_{k, \mlr}$ acting on the space of pseudodifferential operators $\po$ is
\begin{gather*}
\cC_{k, \mlr} \big( \phi \ddel^n \big)
\ \ =\ \  
  \cC_{k, \mlr}^{(0)} (\phi)\, \ddel^n
  +
  \cC_{k, \mlr}^{(1)} (\phi) \, \ddel^{n-1}
  +
  \cC_{k, \mlr}^{(2)} (\phi) \, \ddel^{n-2}
\text{,}
\end{gather*}
where
\begin{alignat}{2}
\nonumber
\cC_{k, \mlr}^{(0)} (\phi)
&\ ={}&&
  2 \pi i m\, \big( (k - n)^2 - 3 (k - n) \big)\, \phi
  +
  32 \pi i m\, y^2\, \phi_{\tau\ov{\tau}}
  -
  8 \pi m\, (1 - 2k + 2n) y \, \phi_{\ov{\tau}}
\\ \nonumber
&&
  +\,&
  32 \pi i m\, y v\, \phi_{\tau \ov{z}}
  -
  4 y^2\, \big( \phi_{\tau\ov{z}\ov{z}} + \phi_{\ov{\tau} zz} \big)
  -8 \pi m\, (1 - k + n) v \, \phi_{\ov{z}}
  -
  8 \pi i m\, v^2\, \phi_{\ov{z}\ov{z}}
\\ \nonumber
&&
  +\,&
  2 i\, (k - n) y\, \big( \phi_{z\ov{z}} + \phi_{\ov{z}\ov{z}} \big)
  -4 y v\, \big( \phi_{z\ov{z}\ov{z}} + \phi_{zz\ov{z}} \big)
\, \text{,}
\\ \nonumber
  \cC_{k, \mlr}^{(1)} (\phi)
&\ ={}&&
  4 \pi i m_r\, n (1 - k + n)\, \big( \phi_z + \phi_{\ov{z}} \big)
  +16 \pi m_r\, n y\, \big( \phi_{\tau \ov{z}} - \phi_{\ov{\tau} z} \big)
  -  8 \pi m_r\, n v\, \big( \phi_{z \ov{z}} + \phi_{\ov{z}\ov{z}} \big)
\, \text{,}
\\ \nonumber
  \cC_{k, \mlr}^{(2)} (\phi)
&\ ={}&&
  16 \pi^2 \ml \mr \, n (n - 1)\, \big( \phi_{\tau} + \phi_{\ov{\tau}} \big)
  + 2 \pi i m_r\, n (n - 1) \,
  \big( \phi_{zz} + 2 \phi_{z \ov{z}} + \phi_{\ov{z}\ov{z}} \big)
\, \text{.}
\end{alignat}
\end{prop}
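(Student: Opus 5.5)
The formula follows by expanding the six compositions in \cref{CasimirDefn} directly on a monomial $\phi\,\ddel^n$ using \cref{def:raisingandlowering}, and then collecting coefficients of $\ddel^n$, $\ddel^{n-1}$, $\ddel^{n-2}$. All six operators map $\phi\,\ddel^n$ to a combination of $\psi_j\,\ddel^{n-j}$ with $j\in\{0,1,2\}$. The weight parameter changes under composition, since compositions are applied left to right: in $\Rmod\,\Lmod$, first $\Rmod_{k}$ is applied and then $\Lmod_{k+2}$. So a priori the composite has terms down to $\ddel^{n-6}$ or so. The first structural step is to record, for each operator, its action as a triangular matrix on the span of $\phi\,\ddel^{n},\phi\,\ddel^{n-1},\dots$. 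The entries are differential operators in $\tau,\ov\tau,z,\ov z$ with coefficients in $y,v$, depending on $k$ and on the current exponent. The key point is that the exponent of $\ddel$ decreases and the weight $k$ shifts, and the combination $k-n$ appearing in $\Rmod$, $\Cmod$ must be evaluated at the shifted weight and shifted exponent.

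Concretely, I will write each operator as $X = X^{(0)} + X^{(1)}\ddel^{-1}\text{-shift} + X^{(2)}\ddel^{-2}\text{-shift}$. Here $X^{(0)}$ is the ``scalar'' part (e.g.\ $\Lell^{(0)}\phi=-iy\phi_{\ov z}$) and $X^{(j)}$ carries the factors $\mr n$, $\mr n(n-1)$. I then compute each of the six summands $4\Cell\Rmod\Lmod$, $2i(\Rell)^2\Lmod$, $-2i\Rmod(\Lell)^2$, $2i\Rell\Lell(\Cmod-2)$, $-3\Cell\Cmod$, $\Cell\Cmod^2$ degree by degree. The degree-$0$ part of the total reduces to the Conley--Raum Casimir $\cC_{k-n,m}$ of \cref{BoringCasimir}, applied to smooth functions with weight $k-n$. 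This gives $\cC^{(0)}$: its constant term $2\pi i m((k-n)^2-3(k-n))$ matches the stated eigenvalue, and the remaining derivative terms are the known Berndt--Schmidt/Conley--Raum expression, which I would verify by product rule using $y_\tau=-i/2$, $v_z=-i/2$, etc.

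The degree-$1$ and degree-$2$ contributions come from pairing one or two shift terms with scalar parts of the other factors, including the derivatives that the scalar parts of later operators apply to the $y^{-1},v/y^2$ coefficients produced by earlier shift terms. Here I expect many cancellations. In particular, all terms involving $v/y$ and $1/y^2$ must cancel, leaving the clean expressions with $\mr n$, $\mr n(n-1)$ and $\ml\mr$. The hardest step is showing that the $\ddel^{n-3}$ and $\ddel^{n-4}$ terms vanish identically, together with the cancellation of the $y^{-2}$ contributions in degree $2$. For this I will organize the computation by collecting terms according to powers of $\mr$. The $\mr^2$ terms should combine via $m=\ml+\mr$ into the factor $\ml\mr$ seen in $\cC^{(2)}$, and the $\mr^2$ terms in degrees $3,4$ should cancel. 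I would first check this cancellation on the monomials $\phi=1$ and $\phi=z$, and then trust the general product-rule bookkeeping, possibly double-checked by computer algebra.
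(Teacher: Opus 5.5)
Your strategy is the paper's own: the proposition is just the expansion of \cref{CasimirDefn} via \cref{def:raisingandlowering}. The problem is the one convention you commit to, which makes the computation land on a different operator. You read $\Rmod\Lmod$ as ``first $\Rmod_k$, then $\Lmod_{k+2}$'', i.e.\ raising operators act before lowering ones. Test this on $\psi=1=1\cdot\ddel^0$, where every shift term vanishes:
\begin{itemize}
\item[(a)] $\Rmod_k(1)=-4\pi m v^2/y^2+k/y$, which $\Lmod$ sends to $-k$;
\item[(b)] $\Rell$ twice followed by $\Lmod$ sends $1$ to $2\pi m$;
\item[(c)] $\Rmod$ followed by $\Lell$ twice sends $1$ to $-2\pi m$;
\item[(d)] $\Rell$ followed by $\Lell$ sends $1$ to $-2\pi m$.
\end{itemize}
Summing the six terms, your operator maps $1\mapsto 2\pi i m\,(k^2-9k+8)$, whereas the proposition gives $2\pi i m\,(k^2-3k)$. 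In general your order turns the scalar in $\cC^{(0)}$ into $2\pi i m\big((k-n)^2-9(k-n)+8\big)$. It alters other coefficients too: for instance, $-8\pi m(1-k+n)\,v\,\phi_{\ov z}$ becomes $8\pi m(k-n+3)\,v\,\phi_{\ov z}$.

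So your claim that the degree-$0$ part ``reduces to the Conley--Raum Casimir'' with the stated constant is not something your computation would deliver. It is imported from \cref{Casimir-Top-Piece}, which is itself read off from this proposition. Your proposed spot checks would not catch this either, because the $\ddel^{n-3}$ and $\ddel^{n-4}$ coefficients vanish on $\phi=1,z$ in both orders. (Also, nothing below $\ddel^{n-4}$ can occur, not $\ddel^{n-6}$.)

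The stated formula is what ordinary composition gives, with the rightmost factor applied first and weights tracked accordingly. This holds despite the words ``applied left to right'' in \cref{CasimirDefn}. The summands are then $4\Cell\,\Rmod_{k-2}\circ\Lmod_k$, $2i\,\Rell\circ\Rell\circ\Lmod_k$, $-2i\,\Rmod_{k-2}\circ\Lell\circ\Lell$, $2i\,\Rell\circ\Lell\circ(\Cmod_k-2)$, $-3\Cell\,\Cmod_k$ and $\Cell\,\Cmod_k\circ\Cmod_k$. With this order your degree-by-degree bookkeeping produces exactly $\cC^{(0)},\cC^{(1)},\cC^{(2)}$, with zero in degrees $n-3$ and $n-4$.

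The holomorphic case is then quick. The degree-$0$ parts of $\Lmod$ and $\Lell$ kill holomorphic coefficients, so in the first three summands only their shift terms contribute.
\begin{itemize}
\item[(a)] In degree $n-1$, the $v/y$ terms from the last three summands cancel, leaving $4\pi i m_r n(1-k+n)\phi_z$.
\item[(b)] In degree $n-2$, the first and third summands both feed into the degree-$0$ part of $\Rmod$, with coefficients proportional to $m m_r$ and $-m_r^2$. These combine to $m_l m_r$ and give $16\pi^2 m_l m_r n(n-1)\phi_\tau$.
\item[(c)] Also in degree $n-2$, the second summand gives $2\pi i m_r n(n-1)\phi_{zz}$, and all $1/y$, $v/y$, $v^2/y^2$ terms cancel.
\end{itemize}
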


\vspace{2ex}

\begin{remark}{\em 
\label{Casimir-Top-Piece}
Observe that the zero-graded piece 
$\cC_{k, \mlr}^{(0)}$ 
as an operator on $C^{\infty}(\HJ)$
depending on $n$
coincides with the Casimir operator
$\cC_{k-n, m}$ acting on 
$C^\infty(\HJ)$
(with slash action $|_{k - n,m}$)
given in~\cite{ConleyRaum}.  
}\end{remark}

\vspace{2ex}

We would expect the Casimir operator $\cC_{k,\mlr}$ to 
take the space of holomorphic 
pseudodifferential operators to itself if $G$ were reductive.
As $G$ is non-reductive, we instead use
\cref{Casimir-Like-On-Smooth-Functions}:
\begin{cor}
\label{Casimir-On-Holos}
Fix $k\in \ZZ$ and $\ml,\mr\in \CC$ with~$m = \ml + \mr \ne 0$.
The Casimir operator 
$\cC_{k,\mlr}$ on pseudodifferential operators
$\po$
restricts to a Casimir operator
$$\cC_{k,\mlr}: \ \ 
\poh\longrightarrow \poh$$
given by 
\begin{gather*}
\begin{aligned}
\cC_{k,\mlr}(\phi\, \ddel^n)
\ \ \ =\ \quad
&2 \pi i m\, \big( (k - n)^2 - 3 (k - n) \big)\ \phi\  \ddel^n\\
+ &4 \pi i m_r\, n (1 - k + n)\ \
\tfrac{\del}{\del z}(\phi)\  \ddel^{n-1}\\
- &
  2 \pi i m_r\, n (n - 1) \ \ 
  \bbL_{\ml}(\phi)\  
 \ddel^{n-2}\, 
\end{aligned}
\end{gather*}
for  $\phi\in\holos$.
In particular,
for $k=0$,
\begin{gather*}
\cC_{0,\mlr}
(\phi\, \ddel^n) =
 2 \pi i m\, n(n+3)\, \phi\, \ddel^n
+  4 \pi i m_r\, n (n+1)\, 
\tfrac{\del}{\del z}(\phi)\ \ddel^{n-1}
- 
  2 \pi i m_r\, n (n - 1)\
  \bbL_{\ml}(\phi)\ \ddel^{n-2}.
\end{gather*}
\end{cor}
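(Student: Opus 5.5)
The plan is to specialize the explicit formula of \cref{Casimir-Like-On-Smooth-Functions} to holomorphic coefficients. If $\phi\in\holos$, then every partial derivative of $\phi$ involving $\ov\tau$ or $\ov z$ vanishes:
\[
\phi_{\ov\tau}=\phi_{\ov z}=\phi_{\tau\ov\tau}=\phi_{\tau\ov z}=\phi_{z\ov z}=\phi_{\ov z\ov z}=\cdots=0 .
\]
All mixed higher derivatives are also zero, since they can be computed as antiholomorphic derivatives of holomorphic functions. The procedure is to go through the three pieces $\cC^{(0)}_{k,\mlr}$, $\cC^{(1)}_{k,\mlr}$, $\cC^{(2)}_{k,\mlr}$ term by term, deleting every term that contains a barred derivative. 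Then we check that what remains is holomorphic.

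In $\cC^{(0)}$, every term except the scalar one carries a barred derivative. This includes $\phi_{\ov\tau zz}$ and $\phi_{zz\ov z}$, each of which contains at least one barred index. The scalar term $2\pi i m((k-n)^2-3(k-n))\phi$ survives, consistent with \cref{Casimir-Top-Piece} and the scalar action of $\cC_{k-n,m}$ on $\holos$ recalled in \cref{BoringCasimir}. In $\cC^{(1)}$, only $4\pi i \mr\, n(1-k+n)\phi_z$ survives; the terms $\phi_{\tau\ov z}-\phi_{\ov\tau z}$ and $\phi_{z\ov z}+\phi_{\ov z\ov z}$ vanish. In $\cC^{(2)}$, the survivors are $16\pi^2\ml\mr\, n(n-1)\phi_\tau$ and $2\pi i\mr\, n(n-1)\phi_{zz}$. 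Factoring out $-2\pi i\mr\, n(n-1)$ and using $16\pi^2=-2\pi i\cdot 8\pi i$ gives
\[
16\pi^2\ml\mr\, n(n-1)\phi_\tau + 2\pi i \mr\, n(n-1)\phi_{zz}
\;=\; -2\pi i \mr\, n(n-1)\bigl(8\pi i\ml\,\phi_\tau-\phi_{zz}\bigr),
\]
which is exactly $-2\pi i\mr\, n(n-1)\,\bbL_{\ml}(\phi)$ by the definition of the heat operator.

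Each surviving coefficient is a constant times $\phi$, $\phi_z$, or $\bbL_{\ml}\phi$, all of which are holomorphic. Note that the non-holomorphic factors $y$ and $v$ appear only in the terms that were deleted. Hence $\cC_{k,\mlr}$ sends $\phi\,\ddel^n$ into $\poh$. By $\CC$-linearity and termwise application to formal series (the operator shifts order down by at most $2$, so each coefficient of the image is a finite sum), $\cC_{k,\mlr}$ preserves $\poh$ and is given by the displayed formula.

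For the case $k=0$, we substitute directly:
\[
(-n)^2-3(-n)=n(n+3), \qquad n(1+n)=n(n+1).
\]

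The only delicate point is bookkeeping. We must confirm that no term with a non-holomorphic prefactor ($y$, $v$, $v^2$) survives, and that the sign and constant in the $\ddel^{n-2}$ coefficient assemble exactly into $\bbL_{\ml}$ rather than $\bbL_m$. The latter requires the product $\ml\mr$, not $m\mr$, to appear in front of $\phi_\tau$; this is what \cref{Casimir-Like-On-Smooth-Functions} provides.
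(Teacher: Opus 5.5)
Your proposal is correct and follows the paper's own route: the corollary is obtained by specializing the explicit formula of \cref{Casimir-Like-On-Smooth-Functions} to $\phi\in\holos$, discarding every term with a barred derivative, and recognizing that $16\pi^2\ml\mr\,n(n-1)\phi_\tau + 2\pi i\mr\,n(n-1)\phi_{zz} = -2\pi i\mr\,n(n-1)\,\bbL_{\ml}(\phi)$. Your bookkeeping of the surviving terms (including that no factor of $y$ or $v$ survives) and the $k=0$ substitution are both accurate.
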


\vspace{1ex}

\section{Equivariant splitting
for pseudodifferential operators}
\label{sec:equivariant-splitting-exists}

In this section, we identify a family of 
linear maps
from the space of
pseudodifferential operators
to the holomorphic functions on the Jacobi
upper half place,
$$\tree: \poh \longrightarrow \holos\, , $$
equivariant under the action of the 
 real Jacobi group $G$.
If this acting Lie group $G$ were reductive, 
general theory would predict existence of 
a map $\Upsilon$ splitting
projection onto the top coefficient
on each filtered component.
Since $G$ is not
reductive, we use other arguments
to prove 
that a splitting map exists.
We identified a Casimir
operator $\cC_{k, \mlr}$ acting on $\poh$ 
in \cref{sec:casimir-operator}. 
Here, we take advantage of the fact
that the eigenvalues of
the induced action 
on the graded pieces are distinct
and apply
\cref{cor:algebraic-Gsplitting} to the weight~$k$ slash action on 
pseudodifferential operators.

To distinguish various actions of $G$,
we write $(M,\ |_{k,m})$ for 
any $G$-module $M$ defined by a right action
of $G$ by a slash operator $|_{k,m}$,
and we write $(M,\ |_{k,m})^G$ or $M^G$
for its submodule of $G$-invariants.
For a fixed $k$ in $\ZZ$,
we abbreviate $\pi$ for
the map 
projecting each operator 
 in the component of filtered
degree $-k$
to its top coefficient, i.e.,
the coefficient
of $\ddel^{-k}$,
see \cref{Filtration}:
\begin{equation}
\label[projection]{PiMap}
\pi=\pi_{-k}: \ \ 
\poh_{-k}\longrightarrow \holos,
\qquad\
\sum_{n = 0}^\infty\phi_n \ddel^{-n-k}
\longmapsto
\phi_{0}
\, .
\end{equation}

\subsection{Exact sequence}
The natural filtration 
on $\Psi=\poh$ (see \cref{Filtration})
induces an exact sequence of $G$-modules
for each $k$ in $\ZZ$ and 
$\ml,\mr$ in $\CC$ with $m=\ml+\mr$:
\begin{equation}
\label{exactsequence}
\begin{tikzcd}[column sep=6ex]
0\arrow{r}
& \big( \Psi_{-k-1}\ ,\ \big|_{0,\mlr}\ \big)
\arrow{r}
&\big( \Psi_{-k}\ ,\ \big|_{0,\mlr}\ \big)
\arrow{r}{\pi}
&\big(\holos\ ,\ \big|_{k,m}\ \big)
\arrow{r}
\arrow[dotted, bend right]{l}[swap]{\mbox{$\tree$}}
&0\ .
\end{tikzcd}
\end{equation}
We construct an explicit $G$\nbd 
equivariant splitting map
for each $k$
and $m\neq 0$,
$$
\tree=\tree_{k,\mlr}:\ \holos \longrightarrow \poh_{-k}
\, ,
$$
i.e., an equivariant map with $\pi 
\circ \Upsilon = \text{Id}$,
the identity,
so that 
taking $G'$-invariants of each term in the exact sequence
for a suitable subgroup
$G'$ of $G$
(see \cref{sec:JacobiForms})
gives an exact
sequence of invariants 
(recalling that the invariant-functor is generally
only left-exact):
\begin{equation}
\label{eq:splitting-of-invariants-exact-sequence}
\begin{tikzcd}[column sep=6ex]
0\arrow{r}
& \big( \Psi_{-k-1}\ ,\ \big|_{0,\mlr}\ \big)^{G'}
\arrow{r}
&\big( \Psi_{-k}\ ,\ \big|_{0,\mlr}\ \big)^{G'}
\arrow{r}{\pi}
&\big(\holos\ ,\ \big|_{k,m}\ \big)^{G'}
\arrow{r}
&0\ .
\end{tikzcd}
\end{equation}

\vspace{1ex}

\subsection{Recursion on coefficients}
We show the existence of  
the $G$-equivariant splitting map
$\Upsilon$ of exact sequence~\eqref{exactsequence}
in \cref{SplittingExists} below;
we will use the next lemma  
to explicitly describe
this map in that same result.
\begin{lemma}
\label{RecursiveFormula}
Fix $2\leq k\in \ZZ$ and $\ml,\mr\in \CC$ with $m=\ml+\mr \ne 0$.
Suppose a $\CC$-linear function
$
Y:
\cO(\HJ) \rightarrow \poh_{-k}
$
preserves the leading coefficient
and commutes with Casimir actions, i.e.,
for all $\phi\in \cO(\HJ)$,
\begin{gather*}
\begin{aligned}
Y(\phi )\ &=\ \phi\, \ddel^{-k} + \sum_{0< n} \phi_n\, \ddel^{-k-n}
\quad\quad \text{ with } \phi_n\in\cO(\HJ)
\quad\quad \text{ and}
\\
Y\big(\mathcal{C}_{k,m}(\phi)\big)
\ &=\ \mathcal{C}_{0,\mlr}\big(Y(\phi)\big)\, .
\quad\quad 
\end{aligned}
\end{gather*}
Then the coefficients of\, $Y(\phi)$ satisfy 
the recursion $\phi_{-1}=0$,
$\phi_{0}=\phi$, and
\begin{gather*}
  \phi_n\
=\  
  \sfrac{\mr (k+n - 1) (k+n - 2)}{mn(3 - n - 2k)}
  \ \big(
  2 \tfrac{\del}{\del z}(\phi_{n - 1})
  -\bbL_{\ml}(\phi_{n-2}) \big)
\quad\quad\text{for}\quad n > 0
\, .
\end{gather*}
\end{lemma}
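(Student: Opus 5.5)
The plan is to compare coefficients of powers of $\ddel$ on both sides of the identity $Y(\mathcal{C}_{k,m}(\phi)) = \mathcal{C}_{0,\mlr}(Y(\phi))$, using the explicit formulas already available for holomorphic inputs. Throughout, set $\phi_0=\phi$ and $\phi_{-1}=0$; this is consistent with the hypothesis that $Y$ preserves the leading coefficient and has no terms of order above $-k$.

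\textbf{Left side.} By \cref{BoringCasimir}, $\mathcal{C}_{k,m}$ acts on $\holos$ by the scalar $2\pi i m(k^2-3k)$. Since $Y$ is $\CC$-linear,
\[
Y\big(\mathcal{C}_{k,m}(\phi)\big) = 2\pi i m(k^2-3k)\sum_{n\ge 0}\phi_n\,\ddel^{-k-n}.
\]

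\textbf{Right side.} Apply the $k=0$ formula of \cref{Casimir-On-Holos} termwise to $\phi_j\,\ddel^{N}$ with $N=-k-j$. The term $\phi_j\ddel^{N}$ contributes to three orders:
\begin{itemize}
\item to $\ddel^{N}$ the amount $2\pi i m\,N(N+3)\,\phi_j$;
\item to $\ddel^{N-1}$ the amount $4\pi i \mr\,N(N+1)\,\tfrac{\del}{\del z}\phi_j$;
\item to $\ddel^{N-2}$ the amount $-2\pi i\mr\,N(N-1)\,\bbL_{\ml}\phi_j$.
\end{itemize}
Termwise application is legitimate because the operator has order $\le 0$ and each target order receives only finitely many contributions. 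Collecting the coefficient of $\ddel^{-k-n}$ means taking $j=n$ in the first item, $j=n-1$ in the second and $j=n-2$ in the third. With $N=-k-n$ we get $N(N+3)=(k+n)(k+n-3)$. With $N=-(k+n-1)$ we get $N(N+1)=(k+n-1)(k+n-2)$. With $N=-(k+n-2)$ we get $N(N-1)=(k+n-2)(k+n-1)$. Both lower-order contributions therefore carry the common factor $(k+n-1)(k+n-2)$.

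\textbf{Equating coefficients.} Comparing the coefficients of $\ddel^{-k-n}$ and using
\[
(k+n)(k+n-3)-(k^2-3k)=n(n+2k-3),
\]
we obtain
\[
2\pi i m\, n(n+2k-3)\,\phi_n = -2\pi i\mr (k+n-1)(k+n-2)\big(2\tfrac{\del}{\del z}\phi_{n-1}-\bbL_{\ml}\phi_{n-2}\big).
\]
For $n=0$ both sides vanish, which is consistent with $\phi_0=\phi$. For $n=1$ the $\phi_{-1}$ term drops out by the convention $\phi_{-1}=0$.

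\textbf{Solving for $\phi_n$.} It remains to divide by $m\,n(n+2k-3)$. Here $m\ne0$ by hypothesis, and for $k\ge 2$ and $n\ge1$ we have $n+2k-3\ge 2$, so the divisor is nonzero. This division is the only point needing care: it is exactly the distinct-eigenvalue condition, and it is where $k\ge 2$ is used. Dividing yields the stated recursion, with $\tfrac{-1}{n+2k-3}=\tfrac{1}{3-n-2k}$.

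The main bookkeeping risk is the index shifts between the exponents $N$ and $n$. I would double-check the signs in the three products $N(N+3)$, $N(N+1)$ and $N(N-1)$ once before dividing.
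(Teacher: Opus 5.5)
Your proposal is correct and is essentially the paper's proof. Both use that $\mathcal{C}_{k,m}$ acts on $\holos$ by the scalar $2\pi i m(k^2-3k)$ to turn the hypothesis into an eigenvector equation for $\mathcal{C}_{0,\mlr}$, and then compare coefficients of $\ddel^{-k-n}$ via \cref{Casimir-On-Holos}. You also check explicitly that $m\,n(n+2k-3)\neq 0$ for $k\ge 2$ and $n\ge 1$, which the paper leaves implicit.
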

\begin{proof}
Recall that
$\mathcal{C}_{k,m}$ acts by a scalar on $\holos$ (see \cref{BoringCasimir}),
$$
2 \pi i m (k^2 - 3k)\, Y(\phi)
\ =\ 
Y\big(  \cC_{k,m}(\phi) \big)
\ = \
\mathcal{C}_{0,\mlr}\big(Y(\phi)\big),
$$
giving an eigenvector equation.
We equate the coefficients of $\ddel^{-k-n}$
using
\cref{Casimir-On-Holos}.
On the left side, this coefficient
is just $2 \pi i m (k^2 - 3k)\phi_n$,
whereas on the right side, this coefficient is
$$
-2\pi i\mr(k+n-2)(k+n-1)\, \bbL_{\ml}(\phi_{n-2})
+4\pi i \mr(k+n-1)(k+n-2)\tfrac{\del}{\del z}\phi_{n-1}
+2\pi im(k+n)(k+n-3)\phi_n
$$
giving the recursive
formula in the lemma.
\end{proof}

\vspace{1ex}

\subsection{
Using the Casimir to find the equivariant splitting map}
We describe the $G$-equivariant splitting map $\tree$ recursively
using
the explicit formula for $\cC_{k, \mlr}$ 
in \cref{sec:casimir-operator}
and the heat operator
$\bbL_{m}=8\pi i m \tfrac{\del}{\del \tau}
- \tfrac{\del^2}{\del z^2}$
before giving a closed formula
in \cref{ClosedFormulaForTree}.
We restrict to weights $k\geq 2$
 in order to
 secure distinct eigenvalues
needed to invoke
 \cref{cor:algebraic-Gsplitting}.

\begin{theorem}
\label{SplittingExists}
For all $2\leq k\in \ZZ$ and $\ml,\mr\in \CC$ with $m=\ml+\mr \ne 0$,
there is a $G$-equivariant
$\CC$-linear
map 
\begin{gather*}
\tree = \tree_{k,\mlr}:\ \ 
\big(\holos,\ \big|_{k,m}\ \big) \longrightarrow 
\big(\poh_{-k},\ \big|_{0,\mlr}\ \big)
\end{gather*}
that commutes with the Casimir operators
and preserves the leading coefficient,
i.e.,
$$
\begin{aligned}
&1)\ \  \tree\big(\phi\ \big|_{k, m}\,g\big) 
\ = \ 
\tree(\phi)\ \big|_{0,\mlr}\ g\, 
& & 
\text{ for all } g \in G,
\\
&2)\ \  \tree\big(\cC_{k,m}(\phi)\big)
= \cC_{0, \mlr}\big(\tree (\phi)\big)\, ,
& & \text{ and}
\\
&3) \ \ \tree(\phi) 
\ = \ 
\phi\, \ddel^{-k} + \sum_{0<n} \phi_n \ddel^{-k-n}
& & \text{ for some } \, \phi_n \in\holos,
\end{aligned}
$$
for all $\phi$ in $\holos$.
Furthermore, $\tree(\phi)$ is given
in terms of the
heat operator:

\begin{gather*}
 \phi_n\ 
=
  \sum_{i\in \ZZ:\ 0\leq i \leq \tfrac{n}{2}}
c_{n, i}\
\Big(\big(\tfrac{\del}{\del z}\big)^{n - 2i}\ \bbL_{\ml}^{\! i}\Big)
(\phi)
\quad\text{ for }n>0
\end{gather*}
for $c_{n,i}$ defined recursively by
$c_{0, 0} = 1$, $c_{*,-1}=0$,
$c_{n,i}=0$ for $i>n/2$, 
and 
$$
  c_{n,i}
=
  \sfrac{\mr (k+n -1) (k+n-2)\rule[-.5ex]{0ex}{2ex}}
{m n (3-n-2k)\rule{0ex}{1.5ex}}
\
  \big(
  2 c_{n-1,\, i}
  \mathbin{\,-\,}
  c_{n-2,\, i-1}
  \big)
  \qquad\text{ for } i\leq n/2
\, .
$$
\end{theorem}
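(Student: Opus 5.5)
We apply \cref{cor:algebraic-Gsplitting} with $M=\poh_{-k}$ and the slash action $|_{0,\mlr}$, where $x$ acts as the Casimir operator $\cC_{0,\mlr}$. The filtration on $M$ is the order filtration. The graded piece $(\Gr M)_{-k-n}$ is identified with $\holos$ via the top coefficient, which is the $\ddel^{-k-n}$ coefficient.

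\textbf{Step 1: verify the hypotheses of \cref{cor:algebraic-Gsplitting}.}
\begin{itemize}
\item[(a)] \emph{$G$ and $x$ commute and preserve the filtration.} The $G$-action preserves the filtration by \cref{cor:action-on-pseudo-differential-operators}, since all terms have order at most $n$. The Casimir $\cC_{0,\mlr}$ preserves $\poh$ and the filtration by \cref{Casimir-On-Holos}. It commutes with the $G$-action by \cref{CasimirEquivariant}. Hence $M$ is an $\mathbb{F}[G][x]$-module with $\mathbb{F}=\CC$.
\item[(b)] \emph{Completeness.} The filtration is complete over $\CC[x]$. This is \cref{ExampleCompleteFiltration} with $t=\ddel$, using the induced filtration on $M_N$ and the remark that completeness transfers to any ring preserving the filtration.
\item[(c)] \emph{Scalar action on graded pieces.} By \cref{Casimir-On-Holos}, $\cC_{0,\mlr}$ acts on $(\Gr M)_{j}$, for $j=-k-n$, by the scalar
\[
\lambda_{j}=2\pi i m\, j(j+3).
\]
The lower-order terms fall into $M_{j-1}$.
\item[(d)] \emph{Distinct eigenvalues.} We need $\lambda_j\ne\lambda_{j'}$ for $j\ne j'\le -k$. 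Since $m\ne0$, equality forces $j(j+3)=j'(j'+3)$, that is $(j-j')(j+j'+3)=0$, so $j+j'=-3$. This is impossible because $j+j'\le -2k-1\le -5$ when $k\ge2$.
\end{itemize}

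The filtration here is bounded above by $-k$, so we invoke the variant in \cref{BoundedFiltrations}. The corollary then yields a unique $G$-equivariant, $\CC[x]$-linear splitting $\sigma_{-k}$ of $\pi$. We take $\tree=\sigma_{-k}$, composed with the identification $(\Gr M)_{-k}\cong(\holos,|_{k,m})$.

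\textbf{Step 2: the action on the source.} We must check that this identification is $G$-equivariant for $|_{k,m}$. This is \cref{GroupActionLifts}.

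We must also check that it intertwines the induced Casimir action with $\cC_{k,m}$. On $(\Gr M)_{-k}$ the operator $x$ acts by $\lambda_{-k}=2\pi i m(k^2-3k)$. This is exactly the scalar by which $\cC_{k,m}$ acts on $\holos$ (\cref{BoringCasimir}). So $\CC[x]$-linearity of $\sigma_{-k}$ gives property 2), while equivariance gives 1), and $\pi\circ\tree=\mathrm{Id}$ gives 3).

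\textbf{Step 3: the explicit formula.} Since $\tree$ satisfies the hypotheses of \cref{RecursiveFormula}, its coefficients obey the recursion $\phi_n=\frac{\mr(k+n-1)(k+n-2)}{mn(3-n-2k)}\big(2\partial_z\phi_{n-1}-\bbL_{\ml}\phi_{n-2}\big)$. The denominator is nonzero for $k\ge2$ and $n>0$.

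We then show by induction on $n$ that $\phi_n=\sum_i c_{n,i}\,\partial_z^{n-2i}\bbL_{\ml}^i\phi$. The inductive step uses that $\partial_z$ and $\bbL_{\ml}$ commute as constant-coefficient operators. Substituting the inductive hypothesis gives
\[
2\partial_z\phi_{n-1}-\bbL_{\ml}\phi_{n-2}=\sum_i\big(2c_{n-1,i}-c_{n-2,i-1}\big)\,\partial_z^{n-2i}\bbL_{\ml}^i\phi,
\]
which matches the stated recursion for $c_{n,i}$.

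The main obstacle is Step 1, in particular the precise verification that the Casimir acts by a scalar on each graded piece and that these scalars separate. The rest is formal once \cref{Casimir-On-Holos} is accepted.
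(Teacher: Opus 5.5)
Your proposal is correct and follows the paper's proof. Like the paper, you apply \cref{cor:algebraic-Gsplitting} (via \cref{BoundedFiltrations}) with $x$ acting as $\cC_{0,\mlr}$, use \cref{CasimirEquivariant}, \cref{ExampleCompleteFiltration}, \cref{GroupActionLifts} and the eigenvalue $2\pi i m(k^2-3k)$ on $(\Gr M)_{-k}$, and then read off the coefficients from \cref{RecursiveFormula}. The differences are cosmetic: the paper uses $M=\poh_{-2}$ once for all $k\geq 2$ rather than $M=\poh_{-k}$, and you make explicit the distinctness check $(j-j')(j+j'+3)\neq 0$ and the induction giving $c_{n,i}$, which the paper only asserts.
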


\vspace{2ex}

\begin{remark}
\label{ClosedFormulaForTree}
{\em 
We may expand the equivariant
splitting map $\Upsilon$ as
$$
\begin{aligned}
\tree(\phi)
&=
\phi \, \ddel^{-k}
- \sfrac{\mr k}{m}
\tfrac{\del}{\del z}(\phi) \, \ddel^{-k-1}
+
\sfrac{m_r k(k+1)}
{2 m^2 (1-2k)}
\Big( 
 (m-2 \mr k)
\tfrac{\del^2}{\del z^2}(\phi)
-
(8 \pi i m \ml)
\tfrac{\del}{\del \tau}(\phi) 
\Big)
\, \ddel^{-k-2}
+\ 
\cdots
\, .
\end{aligned}
$$
}
\end{remark}

\vspace{1ex}

\begin{proof}[Proof 
of \cref*{SplittingExists}] 
We regard $M=\poh_{-2}$ as a filtered 
$\holos$-module: $M=\bigcup_{i\leq -2} M_i$
where $M_i = \poh_i$.
The associated graded module is
$\Gr M
=\bigoplus_{i\leq -2} (\Gr M)_i 
= \bigoplus_{i\leq -2} M_i/M_{i-1}$
as in \cref{Filtration}.
We regard $M$ 
as a $\CC[x]$-module 
by defining an action of $x$
on each $M_k$ by
$x z = \cC_{0, \mlr}(z)$
(extended via composition).
Then $\Gr M$ inherits
both the $\CC[x]$-module structure
and the $G$-module structure
on $M$
as both $x$ and $g$ in $G$
preserve the filtration.
The Casimir
operator $\mathcal{C}_{0, \mlr}$ 
is $G$-equivariant by \cref{CasimirEquivariant}
so the action of $G$ and $x$ commute,
and 
hence $M$ is a $\CC[G][x]$-module.

Fix $k\geq 2$ 
and identify
$(\Gr M)_{-k}$
with $\holos$ 
via
$\phi\, \ddel^{-k} +M_{-k-1}\mapsto \phi$.
The induced action of $\CC[x]$
on $\holos$ under this identification
is given by 
$$x \, \phi 
=2 \pi i m (k^2-3k)\, \phi
= \cC_{k, m}(\phi)
\qquad\tx{ for }
\phi \in \holos
$$
by
\cref{Casimir-On-Holos} (see \cref{BoringCasimir}),
so $(\Gr M)_{-k}$ is an eigenspace
for the action of $x$ with eigenvalue
$2\pi i m (k^2-3k)$.
Furthermore, we observe that the induced 
action of $G$ on 
$\holos$ under this identification
(via the
slash operator $\big|_{0,\mlr}$ on $M$)
is given by
the slash operator $\big|_{k,m}$
by \cref{GroupActionLifts}.

The filtration on $M$
is complete over $\CC[x]$,
see \cref{CompleteFiltrations}
and \cref{ExampleCompleteFiltration}
with $R=\holos$, $t=\ddel$, 
and $S=\CC[x]$.
Then as the eigenvalues of $x$
on the spaces $(\Gr M)_{i}$ are distinct for distinct $i\leq -2$,
\cref{cor:algebraic-Gsplitting}
(see \cref{BoundedFiltrations})
implies 
the existence of 
a unique 
$\CC[G][x]$-module homomorphism
splitting 
the projection $\pi_{-k}:
\poh\rightarrow \holos$
onto the top coefficient, 
i.e.,
a $G$-equivariant map 
$$\tree=\tree_{k, \mlr}:=\tree_{k,\mlr}: \ \ 
\holos\longrightarrow \poh_{-k}
\qquad\quad
$$
with $\pi_{-k} \tree$
the identity,
for each $k\geq 2$.
The recursion in
\cref{RecursiveFormula} 
gives the advertised form for $\tree$.
\end{proof}

\vspace{2ex}

\begin{remark}\label{CommutativeDiagram}
{\em 
For all $2\leq k\in \ZZ$ and $\ml,\mr\in \CC$ with $0\neq m=\ml+\mr$,
the splitting map,
\begin{gather*}
\Upsilon:
\ \big(\holos\ ,\ \big|_{k,m}\ \big)
\longrightarrow
\big( \poh_{-k}\ ,\ \big|_{0,\mlr}\ \big)
\, ,
\end{gather*}
of \cref{SplittingExists} gives
(see the proof)
the following 
commutative diagrams
(for all left arrows or all right arrows)
for each $g$ in the Jacobi group $G$, 
where $\pi$ again is the projection
onto the coefficient of $\ddel^{-k}$:
\[
\begin{tikzcd}[column sep=9ex,row sep=8ex]
\holos 
\arrow[swap]{d}{ \mathcal{C}_{k, m} }
\arrow[shift right=-.5ex]{r}{\tree}
%
& \poh_{-k}
\arrow{d}{\mathcal{C}_{0,\mlr}} 
\arrow[shift right=-.5ex]{l}{\pi}
\\
\holos 
\arrow[shift right=-.5ex]{r}{\tree}
& \poh_{-k}
\arrow[shift right=-.5ex]{l}{\pi}
\end{tikzcd}
\hspace{5ex}\text{ and }\hspace{5ex}
\begin{tikzcd}[column sep=9ex,row sep=8ex]
\holos 
\arrow[swap]{d}{ g}
\arrow[shift right=-.5ex]{r}{\tree}
& \poh_{-k}
\arrow{d}{g} 
\arrow[shift right=-.5ex]{l}{\pi}
\\
\holos 
\arrow[shift right=-.5ex]{r}{\tree}
& \poh_{-k}
\arrow[shift right=-.5ex]{l}{\pi}
\end{tikzcd}
\]
}
\end{remark}

\vspace{2ex}

\begin{remark}{\em
Setting $\mr=0$ recovers
the traditional landscape of Jacobi forms.
Indeed, when $\mr=0$, 
we set $m=\ml$, and
the action of $G$ on $\poh$
corresponds to an action of $G$
on $\holos$ by the slash operator of index $m$
and weight given by the negative power of $\ddel$:
\begin{gather*}
\phi\, \ddel^{-k}\ \big| \ g
\ =\ \big(\phi\ \big|_{k,m} \ g\big)\ \ddel^{-k}.
\end{gather*}
Thus $\ddel$ serves
merely as a placeholder in this case: its power
indicates the weight of the slash operator
by which $G$ should act on the coefficient.
The recursion in 
\cref{RecursiveFormula}
implies that all the coefficients of $\Upsilon_{k, (\ml,0)}$ 
but one vanish and  
\begin{gather*}
  \tree_{k,(\ml,0)}(\phi)
\ =\ 
  \phi\,\ddel^{-k}
\quad\text{ for all}\quad \phi\in\holos\ .
\end{gather*}
Thus the splitting map in this case is simply the identity
map augmented by notation to indicate a choice of
weight for the action of $G$ on $\holos$.
}\end{remark}

\section{Applications to Jacobi forms}
\label{sec:JacobiForms}

We show in this section that the 
formal product of Jacobi forms 
over all weights
with a fixed index is in bijection with a set of
invariant pseudodifferential operators.
In this section, we will
restrict the
indices~$m$ to integers so that the slash actions of $G$
induce slash actions of
~$\Gamma$
on both
$\holos$ and $\poh$
(since the integral points of the center of $G$
act trivially).

\subsection{Jacobi forms}
A {\em Jacobi form} is 
an analogue of a modular form for the Jacobi upper half plane
instead of the Poincar\'e upper half plane, i.e., 
a complex-valued function 
on the Jacobi upper half plane $\HJ$
with good growth behavior 
which is invariant under the action of the Jacobi group 
$\Gamma =\SL{2}(\ZZ) \ltimes \ZZ^2$
via the slash-operator.

\vspace{2ex}

\begin{definition}
\label{def:jacobiforms}
{\em
A {\em (holomorphic) Jacobi form}
of weight $k \in \ZZ$ and index
$m \in \ZZ$ 
is a holomorphic function 
$\phi :\, \HJ \rightarrow \CC$
\begin{enumerateroman}
\item that satisfies $\phi \big|_{k, m}\, \gamma = \phi$ 
for all $\gamma \in \Gamma$, and
\item has growth given by
$e^{2 \pi i\, m \, \lambda^2 \tau} \phi(\tau, \lambda \tau + \mu) = O(1)$ 
for all $\lambda, \mu \in \QQ$.
\end{enumerateroman}
We denote the space of (holomorphic) Jacobi forms of 
fixed weight $k$ and fixed index $m$ by $\rmJ_{k, m}$.
}
\end{definition}
\vspace{2ex}

Recall that $\rmJ_{k,m}=0$ 
for $k<0$ or $m<0$ and
$\rmJ_{k,m}$ recovers 
the space of elliptic modular
forms for $m=0$.
Under the usual multiplication of complex-valued functions,
the space of Jacobi forms
is a ring bigraded by weight and index:
If $\phi$ and $\phi'$ are Jacobi forms of respective weights $k$ and $k'$
and respective indices $m$ and $m'$, 
then $\phi\,\phi'$ is a Jacobi form of
weight $k+k'$, and index $m+m'$.
We set $\rmJ$ to be the bigraded ring over all weights $k$ 
and all Jacobi indices $m$,
$$
{\rm J}_{\bullet,\bullet}\ :=\ \bigoplus_{k,m} {\rm J}_{k,m}\ .$$

\subsection{Jacobi pseudodifferential operators}
\label{JacobiPseudodifferentialOperators}
We define
Jacobi pseudodifferential
operators as those invariant
under the slash action
with an eye to the growth condition:

\begin{definition}
\label{JacobiPseudodifferentialDef}
{\em
The space of {\em Jacobi pseudodifferential
operators} for $\ml,\mr\in\CC$
with $m=\ml+\mr\in\ZZ$
using the slash action $|_{0, \mlr}$ of $\ \Gamma$ 
is
$$
  \rmJ\Psi_\mlr
:=
  \Big\{\ 
  \sum_{k \ge 0} \phi_k\, \ddel^{-k} \in 
  \Big(\poh\Big)^\Gamma
  \,:\ \ 
  e^{2 \pi i\, m\, \lambda^2 \tau}\,
  \phi_n(\tau, \lambda \tau + \mu) = O(1)\;
  \tx{for all $\lambda, \mu \in \QQ$}
  \Big\}
\tx{.}
$$
}
\end{definition}

\vspace{1ex}

Jacobi pseudodifferential operators inherit a filtration from pseudodifferential operators
with filtered component of degree $-k$ given by
\begin{gather}
\label{PseudoDifferentialJacobiFormsFiltration}
  \Big( \rmJ\Psi_\mlr \Big)_{-k}
\ :=\ 
  \rmJ\Psi_\mlr \cap \poh_{-k}\, 
\tx{.}
\end{gather}
As expected, the map $\pi_{-k}:
\sum_{n\geq 0}
\phi_n\del^{-k-n}
\mapsto \phi_0$ projecting
any pseudodifferential operator
of filtered degree $-k$
to its top coefficient
takes Jacobi pseudodifferential operators to Jacobi forms,
see \cref{CommutativeDiagram}:
$$\pi_{-k}\Big(\, \big(\rmJ\Psi_{\mlr}\big)_{-k}\, \Big) 
\ \subset\ 
J_{k,\mr+\ml}
\quad\text{ for all }
k\in \ZZ, \text{ and }
\ml,\mr \in \CC
\tx{ with } m = \ml+\mr \in \ZZ
\, .
$$

The noncommutative product
of pseudodifferential operators
invariant under the Jacobi group $\Gamma$
is again invariant when we 
adjust the action appropriately,
i.e., when the subscripts recording
the actions align:
For any $\ixt$ in $\CC$,
by \cref{almostautos}
(see 
\cref{JacobiPseudodifferentialDef}),
\begin{equation}
    \label{product}
\psi\in \rmJ\Psi_{(m-\ixt,\, \ixt)}
\qquad\text{ and }\qquad
\psi'\in
\rmJ\Psi_{(-\ixt,\, \ixt+m')}
\qquad\text{ implies }\qquad
\psi\cdot \psi'
\in 
\rmJ\Psi_{(m-\ixt,\, \ixt+m')}
\, .
\end{equation}
Notice here that the growth condition 
in \cref{PseudoDifferentialJacobiFormsFiltration} is respected by the product formula in \cref{ssec:pseudodifferential_product}.
Thus the natural noncommutative multiplication map
on pseudodifferential operators
restricts 
for each fixed $m,m'$ in $\ZZ$ and~$\ixt$ in~$\CC$
to a multiplication map
$$
\rmJ\Psi_{(m-\ixt,\, \ixt)}
\ \ot\ \rmJ\Psi_{(-\ixt,\, \ixt+m')}\
\longrightarrow \
\rmJ\Psi_{(m-\ixt,\, \ixt+m')}
\, .
$$

\subsection{Splitting map for Jacobi pseudodifferential operators}

As a consequence
of the theory developed in the last section,
we obtain a splitting map
of $\Gamma$-invariants
for $m$ in $\ZZ$.
Note that we restrict to 
index $m\neq 0$.
When $m=0$, the top coefficient
$\phi_0$ of a Jacobi pseudodifferential operator 
is a Jacobi form which is
constant in the elliptic
variable $z$,
and we fall into the classical
setting of modular forms.
In addition, 
our splitting map $\tree$
is not defined for $m=0$,
so it does not give a decomposition
of 
$\rmJ\Psi_\mlr$ directly.

\begin{cor}\label{TreeRestrictsToInvariants}
Fix $2\leq k\in \ZZ$ and $\ml,\mr\in \CC$ with $0\neq m=\ml+\mr\in \ZZ$.
The  
$\CC$-linear map
\begin{gather*}
\tree = \tree_{k,\mlr}:\ \ 
\big(\holos,\ \big|_{k,m}\ \big) \longrightarrow 
\big(\poh_{-k},\ \big|_{0,\mlr}\ \big)
\end{gather*}
of \cref{SplittingExists}
restricts to a $\CC$-linear
map of\/ $\Gamma$-equivariants
$$
\begin{tikzcd}[column sep=7ex]
\Big(\,\holos\, \Big)^\Gamma
\arrow{r}{\tree}
&
\Big( \poh_{-k}\ \Big)^{
\Gamma}
\arrow[bend left]{l}[swap]{\pi}
\end{tikzcd}
$$
with $\pi\circ \tree = \operatorname{Id}$,
the identity map,
for $\pi=\pi_{-k}$
and $\Gamma$ acting on the domain
by the slash action $|_{k,m}$ and 
acting on the codomain
by $|_{0, \mlr}$.
\end{cor}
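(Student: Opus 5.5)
The plan is to deduce the corollary formally from \cref{SplittingExists}, using only $G$-equivariance of $\tree$ and the fact that, for integral $m$, the slash actions of $G$ factor through $\Gamma$. No new analytic input is needed.

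\emph{Step 1: the actions descend to $\Gamma$.} Since $m=\ml+\mr\in\ZZ$, the central elements $(I,0,\kappa)$ with $\kappa\in\ZZ$ act trivially on both sides. On $\holos$ with $|_{k,m}$ this holds because $\alpha_m$ only picks up the factor $e^{2\pi i m\kappa}=1$. On $\poh$ with $|_{0,\mlr}$ the two factors are $e^{2\pi i\ml\kappa}$ on the left and $e^{2\pi i\mr\kappa}$ on the right. These are scalar functions, constant in $z$, so they commute past $\ddel$ and combine to $e^{2\pi i m\kappa}=1$. Hence both slash actions factor through the image of $(\SL{2}(\ZZ)\ltimes\ZZ^2)\times\ZZ\subset G$, which is identified with $\Gamma$ as in \cref{JacobiGroup}. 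The actions of $\Gamma$ on domain and codomain are therefore well defined and are just the restrictions of the $G$-actions.

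\emph{Step 2: $\tree$ maps invariants to invariants.} Let $\phi\in\holos$ with $\phi\,|_{k,m}\,\gamma=\phi$ for all $\gamma\in\Gamma$. Take any lift $g\in G$ of $\gamma$. Part~1) of \cref{SplittingExists} gives
\[
\tree(\phi)\,\big|_{0,\mlr}\,\gamma=\tree\big(\phi\,|_{k,m}\,g\big)=\tree(\phi).
\]
So $\tree(\phi)$ lies in $\big(\poh_{-k}\big)^\Gamma$.

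\emph{Step 3: $\pi$ maps back and $\pi\circ\tree=\operatorname{Id}$.} The projection $\pi=\pi_{-k}$ is $G$-equivariant from $(\poh_{-k},|_{0,\mlr})$ to $(\holos,|_{k,m})$ by \cref{GroupActionLifts}. It therefore sends $\Gamma$-invariants to $\Gamma$-invariants, which justifies the reverse arrow in the diagram. The identity $\pi\circ\tree=\operatorname{Id}$ is part~3) of \cref{SplittingExists}, restricted to the invariant subspace. Linearity over $\CC$ is inherited.

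The only point requiring care is Step~1, namely checking that the split cocycle pair $(\alpha_{\ml},\alpha_{\mr})$ makes the integral center act trivially even though $\ml$ and $\mr$ are individually complex. This is the scalar computation above, and I expect no real obstacle.
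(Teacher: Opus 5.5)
Your proposal is correct and matches the paper. The paper gives no separate proof: it treats this corollary as immediate from parts 1) and 3) of \cref{SplittingExists}, together with the remark at the start of \cref{sec:JacobiForms} that for integral $m$ the integral central elements act trivially, so that both slash actions descend to $\Gamma$. This is exactly your Steps 1--3; note that for the reverse arrow, the equivariance of $\pi$ also uses that $|_{0,\mlr}\,g$ preserves $\poh_{-k-1}$, which is clear from \cref{cor:action-on-pseudo-differential-operators}.
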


Now we consider the
growth condition.
\begin{cor}
\label{InducesLinearSplitting}
For any 
$2\leq k\in \ZZ$ and $\ml,\mr\in \CC$ with $0\neq m=\ml+\mr\in \ZZ$,
the map $\tree_{k, \mlr}$ 
of \cref{SplittingExists}
restricted to $\rmJ_{k,m}$
induces a $\CC$-linear splitting 
$$
\begin{tikzcd}[column sep=7ex]
 \tree= \tree_{k, \mlr}
:\ \rmJ_{k,m}\ \ 
\arrow{r}{\tree}
&
  \Big( \rmJ\Psi_\mlr \Big)_{-k}
\arrow[bend left]{l}[swap]{\pi}
\,
\end{tikzcd}
$$
to the projection map $\pi=\pi_{-k}$
restricted to 
$\big( \rmJ\Psi_\mlr \big)_{-k}$,
i.e.,
$\pi\circ \tree = 
\operatorname{Id}$
on $J_{k,m}$.
\end{cor}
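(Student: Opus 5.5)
Since \cref{TreeRestrictsToInvariants} already gives a $\Gamma$-equivariant map from $\Gamma$-invariant holomorphic functions to $\Gamma$-invariant operators in $\poh_{-k}$ with $\pi\circ\tree=\operatorname{Id}$, the remaining work is the growth condition of \cref{JacobiPseudodifferentialDef}. Given $\phi\in\rmJ_{k,m}$, the operator $\tree(\phi)$ is $\Gamma$-invariant for $|_{0,\mlr}$, and its top coefficient is $\phi$. So I need to show that every coefficient $\phi_n$ of $\tree(\phi)=\sum_n\phi_n\ddel^{-k-n}$ satisfies
\[
e^{2\pi i m\lambda^2\tau}\,\phi_n(\tau,\lambda\tau+\mu)=O(1)
\qquad\text{for all }\lambda,\mu\in\QQ .
\]
Granting this, $\tree(\phi)\in\big(\rmJ\Psi_\mlr\big)_{-k}$. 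The identity $\pi\circ\tree=\operatorname{Id}$ is immediate from property 3) of \cref{SplittingExists}, and linearity is inherited.

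For the growth condition I will use the explicit form from \cref{SplittingExists}: each $\phi_n$ is a finite $\CC$-linear combination of $\big(\tfrac{\del}{\del z}\big)^{n-2i}\bbL_{\ml}^{\,i}(\phi)$. It therefore suffices to show that applying $\tfrac{\del}{\del z}$ or $\tfrac{\del}{\del\tau}$ preserves the growth bound for holomorphic $\Gamma$-invariant functions of index $m$ with moderate growth. I will work through the Fourier expansion $\phi=\sum_{n,r}c(n,r)q^n\zeta^r$, where $q=e^{2\pi i\tau}$ and $\zeta=e^{2\pi i z}$. For Jacobi forms, the growth condition (ii) is equivalent to the support condition $c(n,r)=0$ unless $4nm-r^2\ge 0$; this is the standard equivalence from Eichler--Zagier. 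Applying $\tfrac{\del}{\del z}$ multiplies $c(n,r)$ by $2\pi i r$, and applying $\tfrac{\del}{\del\tau}$ multiplies it by $2\pi i n$. Neither operation changes the support. So each $\phi_n$ has Fourier coefficients supported on $4nm-r^2\ge0$, with at most polynomial growth in the coefficients.

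I then substitute $z=\lambda\tau+\mu$. For each term,
\[
e^{2\pi i m\lambda^2\tau}\,q^n\zeta^r=e^{2\pi i r\mu}\,\exp\!\big(2\pi i\tau\,(n+r\lambda+m\lambda^2)\big).
\]
The exponent $n+r\lambda+m\lambda^2$ is bounded below by $(4nm-r^2)/(4m)\ge0$. This gives boundedness as $\Im\tau\to\infty$, using the usual convergence estimates.

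The main obstacle is that the $\phi_n$ are not Jacobi forms; the derivatives destroy modularity. So I cannot simply quote the equivalence between the growth condition and Fourier support for the $\phi_n$ themselves. What the $\phi_n$ do retain is invariance under the translations $\tau\mapsto\tau+1$ and $z\mapsto z+\lambda\tau+\mu$ with $\lambda,\mu\in\ZZ$, up to the index-$m$ factor. This holds because the $|_{0,\mlr}$ action of these elements has $c=0$ and $d=1$, so by \cref{cor:action-on-pseudo-differential-operators} it acts coefficientwise, up to lower-order terms involving $\mr$ and $\lambda$.

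I plan to avoid this subtlety by reading off the Fourier expansion of $\phi_n$ directly from the formula in terms of $\phi$, as above, rather than from any invariance of $\phi_n$. The remaining step is to justify uniform convergence for rational $\lambda$. For this I will bound the coefficients $|c(n,r)|$ polynomially, which is standard for holomorphic Jacobi forms, and then dominate the sum termwise.
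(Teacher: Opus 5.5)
Your proof is correct, and its first reduction matches the paper's. Invariance, holomorphicity and $\pi\circ\tree=\operatorname{Id}$ come from \cref{TreeRestrictsToInvariants} and \cref{SplittingExists}, and the growth condition is reduced to the functions $(\tfrac{\del}{\del z})^{n-2i}\bbL_{\ml}^{i}(\phi)$.

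The routes differ only in the analytic step. The paper invokes the theta decomposition $\phi=\sum_\mu h_\mu\,\theta_{m,\mu}$ and reduces to the growth of derivatives of the $h_\mu$ (quasi-modular forms), calling this obvious. You work directly with the Fourier support ($c(n,r)=0$ unless $4nm-r^2\ge 0$). You note that $\tfrac{\del}{\del z}$ and $\tfrac{\del}{\del\tau}$ only multiply coefficients by $2\pi i r$ and $2\pi i n$, so the support never grows. You then use $n+r\lambda+m\lambda^2=m(\lambda+\tfrac{r}{2m})^2+\tfrac{4nm-r^2}{4m}\ge 0$.

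Both arguments rest on the same mechanism, since the theta decomposition repackages exactly this support condition. Yours is more explicit and self-contained, and it shows why the non-modular coefficients $\phi_n$ still satisfy the bound without any appeal to their transformation behaviour. The paper's version is terser and leans on the familiar boundedness of derivatives of modular forms at the cusp.

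Two small remarks. First, your lower bound on the exponent needs $m>0$. The statement allows $m<0$, so you should note separately that this case is vacuous because $\rmJ_{k,m}=0$. Second, you do not need polynomial bounds on $c(n,r)$. All exponents are nonnegative, so for $\Im\tau\ge y_0$ the absolute series is dominated by its value at height $y_0$. That value is finite by absolute convergence of the $(q,\zeta)$-Laurent expansion of the holomorphic function $(\tfrac{\del}{\del z})^a(\tfrac{\del}{\del\tau})^b\phi$.
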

\begin{proof}
By \cref{SplittingExists},
it suffices to check that 
$\Upsilon_{k,\mlr}(\phi)$ lies in $\rmJ\Psi_{\mlr}$ for $\phi$ in $\rmJ_{k,m}$, for fixed $k$ and $m$.
Holomorphicity and invariance
follow from by \cref{TreeRestrictsToInvariants},
so we
focus on the growth condition.
We use the recursive formula for $\tree_{k, \mlr}$
from
\cref{SplittingExists}
and check that 
$(\tfrac{\del}{\del z})^{n-2i}\ \bbL_m^i\,
\phi$ satisfies the growth condition. 
By the theta-decomposition of Jacobi forms, this amounts to checking the growth condition for
quasi-modular forms, where it is obvious.
\end{proof}

\subsection{Modeling
Jacobi forms as power series}
To formulate applications to Jacobi forms,
we use a formal variable $X$ with exponent
indicating weight of a Jacobi form.
Define
$$
\holos\llb X^{-1}\rrb
= 
\Big\{\ \sum_{k \ge 0} \phi_k \, X^{-k}:\
\phi_k\in \holos
\Big\}
$$
with an slash action of $G$ 
for fixed $m\in \CC$
defined by 
\begin{equation}\label{ProdReps}
\big(\sum_{k} \phi_{k}X^{-k} \big)\ 
\ \big|_{\bullet, m}\  g
\ = \ 
\sum_{k} \ \big(\phi_{k} \ \big|_{k,m}\ g \big)\ X^{-k}
\qquad\text{ for }
\phi_k\in \holos, \ \ 
g\in G
\, .
\end{equation}
We take the natural filtration 
on $M=\holos\llb X^{-1} \rrb$
and set
$$M_{-2}= \Big\{\ \sum_{k\geq 2} \phi_k \, X^{-k}:\ \phi_k\in \holos
\ \Big\},
$$
the natural filtration on $\poh$
as in \cref{Filtration},
and use the splitting maps $\tree_{k, \mlr}$ from
~\cref{SplittingExists}.

\begin{theorem}
\label{EquivariantIsomorphism}
 For any $\ml,\mr\in\CC$ with $m=\ml+\mr\neq 0$,
the map
\begin{align*}
\tree_{\mlr}
: \ \ \ 
\big( \holos\llb X^{-1}\rrb _{-2},\ \big|_{\bullet, m}
\ \ \big)
&\ \longrightarrow  \
\big(\poh_{-2},\ \big|_{0,\mlr}\, \big)\, ,
\\
  \sum_{k \ge 2} \phi_k X^{-k}
\ &\ \longmapsto\ 
  \sum_{k \ge 2} \tree_{k,\mlr}\big( \phi_k \big)
\end{align*}
is a $G$\nbd equivariant filtered $\CC$-linear isomorphism
with slash action $|_{\DOT, m}$ as given in \cref{ProdReps}.
\end{theorem}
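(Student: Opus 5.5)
The plan is to check three properties in turn: that $\tree_{\mlr}$ is well defined and filtered, that it is $G$-equivariant, and that it is bijective. Bijectivity will come from a triangular (successive-approximation) argument against the top-coefficient projections.

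\emph{Well defined and filtered.} Each $\tree_{k,\mlr}(\phi_k)$ lies in $\poh_{-k}$ by \cref{SplittingExists}. Hence for each fixed power $\ddel^{-n}$, only the finitely many terms with $2\le k\le n$ contribute to its coefficient. So the infinite sum $\sum_{k\ge 2}\tree_{k,\mlr}(\phi_k)$ converges formally in $\poh_{-2}$; equivalently, it is the limit in the complete filtration of \cref{ExampleCompleteFiltration}. Linearity is clear. The map sends the $(-k)$-th filtered piece $\{\sum_{j\ge k}\phi_jX^{-j}\}$ into $\poh_{-k}$, so it is filtered.

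\emph{Equivariance.} For $g\in G$, by \cref{ProdReps} and property 1) of \cref{SplittingExists},
\[
\tree_{\mlr}\Big(\sum_k \phi_kX^{-k}\,\big|_{\bullet,m}\,g\Big)=\sum_k \tree_{k,\mlr}(\phi_k|_{k,m}g)=\sum_k \tree_{k,\mlr}(\phi_k)\,\big|_{0,\mlr}\,g .
\]
It then remains to pull the slash action out of the infinite sum. This holds because $|_{0,\mlr}g$ preserves each $\poh_{-k}$: by \cref{cor:action-on-pseudo-differential-operators}, the coefficient of $\ddel^{-n}$ in $\psi|g$ depends only on the coefficients of $\ddel^{-j}$ in $\psi$ with $j\le n$. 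So the action commutes with these formally convergent sums, and it suffices to compare coefficients one power at a time, each of which is a finite sum.

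\emph{Bijectivity.} For injectivity, suppose $\sum_k\phi_kX^{-k}\ne0$ and let $k_0$ be minimal with $\phi_{k_0}\ne0$. By property 3), the image lies in $\poh_{-k_0}$ with top coefficient $\phi_{k_0}\ne0$, so the image is nonzero. For surjectivity, given $\psi\in\poh_{-2}$, define $\phi_k$ recursively. Set $\phi_2=\pi_{-2}(\psi)$. Given $\phi_2,\dots,\phi_{k-1}$, the operator $\psi-\sum_{j<k}\tree_{j,\mlr}(\phi_j)$ lies in $\poh_{-k}$, because its coefficients of $\ddel^{-j}$ for $j<k$ vanish by construction. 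Set $\phi_k=\pi_{-k}$ of this operator. The differences $\psi-\sum_{j\le k}\tree_{j,\mlr}(\phi_j)$ then lie in $\poh_{-k-1}$, and these pieces intersect to $0$. By completeness, $\tree_{\mlr}(\sum\phi_kX^{-k})=\psi$. The inverse map is therefore also filtered, so $\tree_{\mlr}$ is a filtered isomorphism.

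The only point requiring care is the interchange of the slash action with infinite sums in the equivariance step. The plan is to handle it via the triangularity of the action from \cref{cor:action-on-pseudo-differential-operators}, which reduces everything to finite sums coefficientwise.
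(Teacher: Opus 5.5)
Your proposal is correct and follows essentially the same route as the paper. Equivariance is checked termwise via property 1) of \cref{SplittingExists}, with the interchange of the slash action and the infinite sum justified because only finitely many terms contribute to each power of $\ddel$ and the action preserves the filtration. Bijectivity comes from the same recursive peeling-off of top coefficients with $\pi_{-k}$; you also spell out the injectivity-by-leading-term and $\bigcap_N \poh_{-N}=0$ details that the paper leaves as ``one may check.''
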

\begin{proof}
For $g$ in $G$,
\cref{SplittingExists} implies that $\tree=\tree_{\mlr}$
takes
$(\sum_k \phi_k X^{-k})\ \big|_{\bullet, m}\ g
=\sum_k 
\big(\phi_k\ \big|_{k,m}\ g\big) X^{-k}
$
to
$$
\begin{aligned}
\sum_k\tree_{k, \mlr}
\big( \phi_k\ \big|_{k, m}\ g \big)
&\ =\
\sum_k \Big( \tree_{k, \mlr} (\phi_k) \ \big|_{0,\mlr}\ g \Big)
\\ &\ =\
\Big(\sum_k\tree_{k, \mlr}(\phi_k)\Big)\ \big|_{0,\mlr}\ g 
\\ &\ =\
\tree\big(\sum_k\phi_kX^{-k} \big)\ \big|_{0,\mlr}\ g\, ,
\end{aligned}
$$
where the second equality holds as there
are only finitely many terms in the sum
with nonzero coefficient of a fixed $\ddel^{-n}$
and the action of $g$ respects the filtration.

We construct a $G$-invariant inverse map
$\poh\rightarrow 
\holos\llb X^{-1}\rrb$,
\begin{gather*}
\psi = \sum_{n\geq 2}\phi_n\, \ddel^{-n}
\ \longmapsto\ \displaystyle\sum_{n\geq 2}\, \phi'_{n}\, X^{-n}\, ,
\end{gather*}
by setting
$\phi'_{2}:=\phi_{2}$,
subtracting
$\tree_{2,\mlr}(\phi_2)$
from $\psi$ 
to cancel top coefficients,
and then using the new top coefficient
as the coefficient $\phi_3'$
of $X^{-3}$, etc:
We define recursively
$$\phi_k' := \pi_{-k}\Big(\psi-\sum_{n = 2}^{k-1}\tree_{n,\mlr}(\phi'_{n}) \Big)
\quad\text{ 
for $k \geq 2$,}
$$
where $\pi_{-k}$ again
 is the projection
onto the (top) coefficient of $\del^{-k}$ as in \eqref{PiMap} and
\eqref{exactsequence}.
One may check that this map
is both left and right inverse to
$\tree_{\mlr}$
(by inducting on filtered degree $k$
and working modulo $\poh_{-k}$).
\end{proof}

\begin{cor}\label{GammaInvariants}
 For any $\ml,\mr\in\CC$ with $0\neq m=\ml+\mr\in \ZZ$, the map of \cref{EquivariantIsomorphism}
restricts to a\ $\CC$-vector
space isomorphism of invariants:
\begin{gather*}
  \Upsilon_{\mlr}
:\
  \Big(\holos\llb X^{-1}\rrb_{-2},\ \big|_{\DOT,m}\ \Big)
  ^\Gamma
\ \ \overset{\cong}{\longrightarrow}\ \ 
  \Big( \poh_{-2},\ \big|_{0,\mlr} \ \Big)^\Gamma
\,\tx{.}
\end{gather*}
\end{cor}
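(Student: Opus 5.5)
Since $\Gamma$ embeds in $G$ modulo the central $\ZZ$, and $m\in\ZZ$ makes that central subgroup act trivially on both sides, the plan is to restrict the $G$-equivariant isomorphism $\tree_{\mlr}$ of \cref{EquivariantIsomorphism} to $\Gamma$ and apply the general fact that an equivariant bijection maps invariants bijectively onto invariants.

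First, we observe that for $m=\ml+\mr\in\ZZ$ the slash actions $|_{\DOT,m}$ on $\holos\llb X^{-1}\rrb_{-2}$ and $|_{0,\mlr}$ on $\poh_{-2}$ factor through $\Gamma$. An element $(I,0,\kappa)$ with $\kappa\in\ZZ$ contributes the factor $e^{2\pi i\ml\kappa}$ on the left and $e^{2\pi i\mr\kappa}$ on the right of $\phi\,\ddel^n$. These are scalars commuting with $\ddel$, so the total contribution is $e^{2\pi i m\kappa}=1$. This matches the convention in \cref{JacobiGroup}. The same check applies on the $X$-side, where the cocycle $\alpha_m$ contributes $e^{2\pi i m\kappa}=1$.

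Next, since $\tree_{\mlr}$ is $G$-equivariant, it is equivariant for the preimage of $\Gamma$ in $G$. Hence if $\Phi$ is $\Gamma$-invariant, then $\tree_{\mlr}(\Phi)\,|_{0,\mlr}\,\gamma=\tree_{\mlr}(\Phi\,|_{\DOT,m}\,\gamma)=\tree_{\mlr}(\Phi)$, so $\tree_{\mlr}$ maps invariants into invariants. Injectivity on invariants is inherited from \cref{EquivariantIsomorphism}. For surjectivity, we take the inverse map $\tree_{\mlr}^{-1}$ constructed in the proof of \cref{EquivariantIsomorphism}. The inverse of a $G$-equivariant bijection is automatically $G$-equivariant: from $\tree(\tree^{-1}(\psi)|g)=\psi|g$ we get $\tree^{-1}(\psi|g)=\tree^{-1}(\psi)|g$. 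Therefore $\tree^{-1}$ also sends invariants to invariants, and the restriction is a bijection of $\CC$-vector spaces.

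The only point needing care is that the actions are genuinely well defined on $\Gamma$, i.e.\ that the central integral elements act trivially, as handled in the first step. Beyond that, the argument is formal and needs neither the growth condition nor the restriction $k\geq 2$ beyond what is already built into \cref{EquivariantIsomorphism}.
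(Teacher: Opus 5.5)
Your proposal is correct and matches the paper's argument. The paper states this corollary without a separate proof, as an immediate consequence of \cref{EquivariantIsomorphism}: for $m\in\ZZ$ the integral central elements act trivially, so both slash actions factor through $\Gamma$, and a $G$-equivariant bijection (whose inverse is then automatically equivariant) restricts to a bijection of $\Gamma$-invariants. Your explicit check that $(I,0,\kappa)$ acts by $e^{2\pi i \ml\kappa}\,e^{2\pi i \mr\kappa}=e^{2\pi i m\kappa}=1$ on the pseudodifferential side fills in the one detail the paper leaves to its remark in \cref{JacobiGroup}.
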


\vspace{2ex}

We now observe that this last map
is compatible with the growth conditions
defining Jacobi forms
and Jacobi pseudodifferential operators.
\begin{cor}
\label{cor:isomorphism-to-pseudo-differential-jacobi-forms}
 For any $\ml,\mr\in\CC$ with $0\neq m=\ml+\mr\in \ZZ$,
the map $\tree_{\mlr}$ 
of \cref{EquivariantIsomorphism} 
induces a linear
isomorphism from the formal direct product over
$k$ of all Jacobi forms of index $m$ and weight
$k$ to the space of Jacobi
pseudodifferential operators:
\begin{gather*}
  \tree_{\mlr}
:\,
\prod_{k\geq 2}  \rmJ_{k,m}\ \ 
\overset{\cong}{\longrightarrow}
\ \ 
  \big( \rmJ\Psi_\mlr \big)_{-2}
\,\text{,}
\qquad
\prod_{k\geq 2}
 \phi_k 
\longmapsto
  \tree_\mlr\big( \sum_{k \ge 2} \phi_k X^{-k} \big)
=
  \sum_{k \ge 2} \tree_{k,\mlr}\big( \phi_k \big)
\end{gather*}
\end{cor}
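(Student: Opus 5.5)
We derive the corollary from \cref{GammaInvariants}: $\tree_{\mlr}$ is already a $\CC$-linear isomorphism between the $\Gamma$-invariants of $\holos\llb X^{-1}\rrb_{-2}$ under $|_{\DOT,m}$ and $(\poh_{-2},\,|_{0,\mlr})^\Gamma$. It therefore suffices to show that this bijection matches the two subspaces cut out by the growth conditions. On the left that subspace is $\prod_{k\geq 2}\rmJ_{k,m}$; on the right it is $(\rmJ\Psi_\mlr)_{-2}$.

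\emph{Identifying the invariants on the left.} By \cref{ProdReps}, $G$ acts on $\sum_k \phi_k X^{-k}$ coefficientwise, via $|_{k,m}$ on $\phi_k$. Hence an element is $\Gamma$-invariant exactly when each $\phi_k$ is invariant under $|_{k,m}$. Imposing holomorphy and the growth condition of \cref{def:jacobiforms} on each coefficient therefore gives exactly $\prod_{k\geq 2}\rmJ_{k,m}$.

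\emph{Forward direction.} Take $\prod_k\phi_k$ with $\phi_k\in\rmJ_{k,m}$. By \cref{InducesLinearSplitting}, each $\tree_{k,\mlr}(\phi_k)$ lies in $(\rmJ\Psi_\mlr)_{-k}$. By \cref{SplittingExists}, $\tree_{k,\mlr}(\phi_k)$ has order at most $-k$. So the coefficient of a fixed $\ddel^{-n}$ in $\sum_k\tree_{k,\mlr}(\phi_k)$ is a finite sum of functions over $2\leq k\leq n$. Each of these functions is $O(1)$ after multiplication by $e^{2\pi i m\lambda^2\tau}$ at $(\tau,\lambda\tau+\mu)$. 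A finite sum of such functions satisfies the same bound, so the image lies in $(\rmJ\Psi_\mlr)_{-2}$. Invariance of the image is already given by \cref{GammaInvariants}.

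\emph{Reverse direction.} This is the main point. Let $\psi=\sum_{n\geq2}\phi_n\ddel^{-n}\in(\rmJ\Psi_\mlr)_{-2}$. We must show its preimage $\sum\phi'_kX^{-k}$ has every $\phi'_k\in\rmJ_{k,m}$. We use the recursive inverse from the proof of \cref{EquivariantIsomorphism}:
\[
\phi'_k=\pi_{-k}\Big(\psi-\sum_{n=2}^{k-1}\tree_{n,\mlr}(\phi'_n)\Big),
\]
and induct on $k$. Suppose $\phi'_2,\dots,\phi'_{k-1}$ are Jacobi forms. Then $\tree_{n,\mlr}(\phi'_n)\in\rmJ\Psi_\mlr$ by \cref{InducesLinearSplitting}. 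The space $\rmJ\Psi_\mlr$ is closed under subtraction, since both invariance and the $O(1)$ bounds are preserved. Hence $\psi-\sum_{n<k}\tree_{n,\mlr}(\phi'_n)$ lies in $(\rmJ\Psi_\mlr)_{-k}$. Its top coefficient $\phi'_k$ is then holomorphic and satisfies the growth condition, because the growth condition in \cref{JacobiPseudodifferentialDef} is imposed coefficientwise. It is also $|_{k,m}$-invariant, by \cref{GroupActionLifts} (the projection statement noted after \eqref{PseudoDifferentialJacobiFormsFiltration}). Thus $\phi'_k\in\rmJ_{k,m}$, which completes the induction.

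Combining the two directions, the isomorphism of \cref{GammaInvariants} restricts to mutually inverse maps between $\prod_{k\geq2}\rmJ_{k,m}$ and $(\rmJ\Psi_\mlr)_{-2}$. The only substantive input is that $\tree_{k,\mlr}$ preserves the growth condition, which \cref{InducesLinearSplitting} already supplies.
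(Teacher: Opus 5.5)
Your proposal is correct and follows the paper's own argument. You restrict the isomorphism of \cref{GammaInvariants} to the growth-conditioned subspaces, use \cref{InducesLinearSplitting} for the forward inclusion, and check that the recursive inverse from the proof of \cref{EquivariantIsomorphism} respects the growth condition. Your induction on $k$ for the reverse direction fills in the step that the paper only asserts with ``one may show''.
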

\begin{proof}
We identify
$\prod_{k\geq 2}  \rmJ_{k,m}$
with the subspace of
$(\holos\llb X^{-1}\rrb_{-2})^\Gamma$
  under the slash action $\big|_{\DOT,m}$
satisfying the necessary
growth condition of 
\cref{def:jacobiforms}
in the obvious way, i.e., 
via $\prod_{k\geq 2} \phi_k \mapsto 
\sum_{k\geq 2} \phi_k X^{-k}$.
The map $\tree_{\mlr}$
of \cref{GammaInvariants}
restricts to an injective linear
map taking $\prod_{k\geq 2}  \rmJ_{k,m}$
to
  $\big( \rmJ\Psi_\mlr \big)_{-2}$
 by \cref{InducesLinearSplitting}
and \cref{GammaInvariants}.
One may show that the inverse map 
constructed
in the proof of \cref{EquivariantIsomorphism}
is compatible with the growth
condition and therefore
restricts to a map 
that takes $\big( \rmJ\Psi_\mlr \big)_{-2}$
to $\prod_{k\geq 2}  \rmJ_{k,m}$,
so $\tree_{\mlr}$ restricts
to an isomorphism as advertised.
\end{proof}

Note that the linear isomorphism of
\cref{cor:isomorphism-to-pseudo-differential-jacobi-forms}
is not a ring homomorphism
under multiplication of Jacobi forms
and multiplication of pseudodifferential operators.
In the next section, we define the Rankin-Cohen
brackets as a measure of the failure 
of the map $\tree$ to respect
these multiplications,
or rather,
as the mechanism to transport
the noncommutative multiplication 
of pseudodifferential operators
to Jacobi forms.


\section{Rankin-Cohen brackets for Jacobi forms}
\label{sec:RankinCohen}

We now define 
distinguished families
of Rankin-Cohen
brackets for Jacobi forms
using the correspondence 
with invariant pseudodifferential operators
established in the previous section.
These Rankin-Cohen brackets 
record the noncommutative
multiplication of 
pseudodifferential operators
transported
to the product space
of Jacobi forms.
Indeed,
we define the Rankin-Cohen
bracket as the pull-back
of the noncommutative multiplication
under the map
$\tree$ from Jacobi forms to pseudodifferential operators 
as in \cref{cor:isomorphism-to-pseudo-differential-jacobi-forms}.

\subsection{Rankin-Cohen brackets}
\label{Rankin-CohenBracketConstruction}
We say a $\CC$-bilinear function
$\cO(\HJ)\times \cO(\HJ)
\rightarrow
\cO(\HJ)
$
is a $\nu$-th {\em Rankin-Cohen bracket on Jacobi forms}
(see~\cite{Boecherer})
for fixed $k,k', m,m' \geq 0$
if it raises weight by $\nu$ 
and is a holomorphic 
differential operator,
i.e., has the form
$$
f:\
(\phi, \phi')\
\mapsto
\sum_{a,b, c, d\geq 0}
c_{a,b,c,d}\ \
\tfrac{\del^a}{\del \tau^a}
\tfrac{\del^b}{\del z^b}
(\phi)\
\tfrac{\del^c}{\del \tau^{c}}
\tfrac{\del^d}{\del z^{d}}
(\phi')
$$
with the sum finite,
$c_{a,b,c,d}$ in $\CC$,
and
$$
f(\phi,\phi')\, |_{k+k'+\nu,m+m'} \ g
\ = \ 
f\big(\phi|_{k,m} \ g,\
\phi'\, |_{k',m'}\ g\big)
\qquad\text{ for all }
g \in G
\, .
$$
Every $\nu$-th Rankin-Cohen bracket
restricts to a $\CC$\nbd{}bilinear function
$$
J_{k,m}\, \times\, J_{k',m'}
\longrightarrow J_{k+k'+\nu,\, m+m'}\, .
$$

B\"ocherer~\cite{Boecherer}
showed that the $\CC$-dimension of
the space of $\nu$-th Rankin-Cohen brackets 
for 
fixed weights $k$, $k'$ and indices
$m$, $m'> 0$
is generically
$1+ \lfloor \nu/2 \rfloor$.
Choie and Eholzer~\cite{ChoieEholzer} found a basis for this space 
by leveraging the heat operator, effectively tapping into elliptic modular forms via the theta decomposition,
but
no particularly
privileged basis 
has been identified.
This is in stark
contrast to the case
of Rankin-Cohen brackets
of elliptic modular forms:
there the space of
$\nu$-th Rankin-Cohen
brackets
is $1$-dimensional for $\nu$ even
and $0$ for $\nu$ odd.
Thus for elliptic modular forms,
fixing a choice of Rankin-Cohen bracket as $\nu$ ranges amounts to choosing
some coefficients in $\CC$.

For Jacobi forms, one seeks 
families of Rankin-Cohen
brackets that capture
key properties of the Jacobi setting.
Choie~\cite{Choie} identified a one-dimensional subspace 
of Rankin-Cohen brackets
for each $\nu$
arising as
polynomials in the heat operator
(see~\cite{Boecherer}).
In the next theorem, we define
families of Rankin-Cohen
brackets of Jacobi forms parametrized by a parameter $\ixt$ in $\CC$ 
with geometric origin.
We use the
noncommutative multiplication
of pseudodifferential operators
given
in terms of the maps
$$
  \tree_{\mlr}
:\,
\prod_{k\geq 2}  \rmJ_{k,m}\ \ 
\overset{\cong}{\longrightarrow}
\ \ 
  \big( \rmJ\Psi_\mlr \big)_{-2}
$$
of \cref{cor:isomorphism-to-pseudo-differential-jacobi-forms}.
We convert two Jacobi forms into two
operators using $\tree$, multiply the operators,
and then apply $\tree^{-1}$ to convert back into
a family of Jacobi forms.
The $\nu$-th element
of this family
then will define the $\nu$-th Rankin-Cohen bracket.
In the next theorem, we
embed
each space $J_{k,m}$
of 
Jacobi forms for fixed $m$ and $k\geq 2$
into the product $\prod_{k'\geq 2} J_{k',m}$
as the $k$-th component
with other components zero.
\begin{thm}
\label{BracketOfJacobiForms}
Fix a complex parameter $\ixt$.
For any two Jacobi forms
$\phi$ and $\phi'$ of
respective weights $k, k'\geq 2$ and indices $m, m'>0$, define
a family of Rankin-Cohen brackets
 by
$$
\prod_{i\geq k+k'} 
[\phi, \phi' ]^{\ixt}_{(i-k-k')}
\ \ =\ \ 
\tree^{-1}_{(m-\ixt, \, \ixt + m')}
\Big( 
\tree_{(m-\ixt, \ixt)}(\phi)
\cdot
\tree_{(-\ixt, \ixt+ m')
}(\phi')
\Big)
\, ,
$$
i.e., 
$[\phi, \phi' ]^{\ixt}_{(\nu)}$
is the $(\nu+k+k')$-th coordinate
of the image under $\, \tree^{-1}$
for $\nu\geq 0$.
Then each 
$[\phi, \phi' ]^{\ixt}_{(\nu)}$
is a Jacobi form of weight
$k+k'+\nu$ and index $m+m'$,
and 
$$\prod_{\nu\geq 0} \
[\ \ \ \ , \ \ \ ]^{\ixt}_{(\nu)}
:\ \ \
\rmJ_{k,m}
\ot\ \rmJ_{k',m'}
\ \longrightarrow\ 
\prod_{\nu\geq k+k'}\
\rmJ_{k+k'+\nu, m+m'} 
\, 
$$
defines $\,\CC$-linear function
for $k,k'\geq 2$ and $m,m'>0$. 
\end{thm}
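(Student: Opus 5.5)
The plan is to chase the two Jacobi forms through the chain of maps already built and check at each stage that we stay inside the spaces of \cref{cor:isomorphism-to-pseudo-differential-jacobi-forms}. First, with $\phi\in\rmJ_{k,m}$, $k\geq 2$, I view $\phi$ as the element of $\prod_{k''\geq 2}\rmJ_{k'',m}$ with only the $k$-th coordinate nonzero. Since $m=(m-\ixt)+\ixt\neq 0$, \cref{InducesLinearSplitting} gives $\psi:=\tree_{(m-\ixt,\ixt)}(\phi)=\tree_{k,(m-\ixt,\ixt)}(\phi)\in\big(\rmJ\Psi_{(m-\ixt,\ixt)}\big)_{-k}$. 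The second factor needs more care, because its left index is $-\ixt$ and its right index is $\ixt+m'$. These still sum to $m'>0$, so $\tree_{k',(-\ixt,\ixt+m')}$ is defined by \cref{SplittingExists}, and \cref{InducesLinearSplitting} gives $\psi':=\tree_{(-\ixt,\ixt+m')}(\phi')\in\big(\rmJ\Psi_{(-\ixt,\ixt+m')}\big)_{-k'}$. The growth condition is checked through the heat-operator form of the coefficients, exactly as in that corollary.

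Next, the product rule \eqref{product}, which comes from \cref{almostautos} with $(m_1,m_2,m_3)=(m-\ixt,\ixt,\ixt+m')$ and weight parameter $0$, shows that $\psi\cdot\psi'$ is $\Gamma$-invariant for $|_{0,(m-\ixt,\ixt+m')}$. Its coefficients are holomorphic, since they are finite sums of products of $z$-derivatives of holomorphic coefficients by \eqref{eq:multiplication-of-pseudo-differential-operators}. For the filtration, $\Psi_{-k}\cdot\Psi_{-k'}\subset\Psi_{-k-k'}\subset\Psi_{-2}$ because $k+k'\geq 4$. For growth, each coefficient of $\ddel^{-n}$ in the product is a finite sum $\sum\binom{-a}{r}\phi_a\,\partial_z^r\phi'_b$. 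Evaluated at $z=\lambda\tau+\mu$, this is bounded after multiplication by $e^{2\pi i (m+m')\lambda^2\tau}$: the factors $e^{2\pi i m\lambda^2\tau}$ and $e^{2\pi i m'\lambda^2\tau}$ split off, and $z$-derivatives of the coefficients satisfy the same growth. I would justify this last point by the theta decomposition, as in \cref{InducesLinearSplitting}. Hence $\psi\psi'\in\big(\rmJ\Psi_{(m-\ixt,\ixt+m')}\big)_{-2}$.

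Then I apply \cref{cor:isomorphism-to-pseudo-differential-jacobi-forms} with the pair $(m-\ixt,\ixt+m')$, whose index $m+m'$ is a nonzero integer. The inverse $\tree^{-1}$ produces an element of $\prod_{i\geq 2}\rmJ_{i,m+m'}$. Because $\psi\psi'\in\Psi_{-k-k'}$, the recursive inverse in the proof of \cref{EquivariantIsomorphism} gives zero coordinates for $i<k+k'$: every projection $\pi_{-i}$ with $i<k+k'$ vanishes. So the image is indexed by $i\geq k+k'$, and the $i$-th coordinate $[\phi,\phi']^{\ixt}_{(i-k-k')}$ lies in $\rmJ_{i,m+m'}$, i.e.\ it has weight $k+k'+\nu$ and index $m+m'$.

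Linearity is then immediate. Each $\tree$ is $\CC$-linear, multiplication of pseudodifferential operators is bilinear, and $\tree^{-1}$ is linear. So $(\phi,\phi')\mapsto\prod_\nu[\phi,\phi']^{\ixt}_{(\nu)}$ is bilinear and factors through $\rmJ_{k,m}\ot\rmJ_{k',m'}$.

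The main obstacle I expect is the growth bookkeeping: showing that $z$-derivatives of coefficients, and the products arising in the multiplication, keep the $O(1)$ condition. I would handle this by reducing to the theta decomposition, writing each relevant coefficient as a combination of quasi-modular forms times theta-type functions, and reusing the argument sketched in \cref{InducesLinearSplitting}. A lesser point needing care is that the middle index $-\ixt$ may be non-integral. This is harmless, because the invariance we actually use concerns the outer indices $m-\ixt+\ixt=m$, $m'$, and $m+m'$, all of which are integers.
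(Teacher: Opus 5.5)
Your proposal is correct and follows the paper's own proof: the paper likewise composes $\tree_{k,(m-\ixt,\ixt)}\otimes\tree_{k',(-\ixt,\ixt+m')}$ (via \cref{InducesLinearSplitting}), then the multiplication \eqref{product} coming from \cref{almostautos}, then $\tree^{-1}_{(m-\ixt,\ixt+m')}$ from \cref{cor:isomorphism-to-pseudo-differential-jacobi-forms}. Your extra checks make explicit what the paper's short proof leaves implicit: the growth of products and $z$-derivatives via the theta decomposition, the filtration bound $\Psi_{-k}\Psi_{-k'}\subset\Psi_{-k-k'}$ forcing the coordinates below $k+k'$ to vanish, and the fact that \cref{almostautos} is only needed at weight parameter $0$.
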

\begin{proof}
We use the multiplication 
\cref{product} on pseudodifferential Jacobi forms.
\cref{InducesLinearSplitting,cor:isomorphism-to-pseudo-differential-jacobi-forms}
imply 
that for any weights $k,k'\geq 2$,
indices $m,m'> 0$, 
and parameter $\ixt\in \ZZ$,
the following
composition 
is well-defined:
$$
\begin{aligned}
\rmJ_{k,m}\
\ \ot \ \
 \rmJ_{k',m'}\ \ 
& \xrightarrow{\ \ 
 \tree_{k, (m-\ixt,\, \ixt)}
\, \ot\,  \tree_{k', (-\ixt,\, \ixt+m')}\ \ }
&& \big( \rmJ\Psi_{(m-\ixt,\, \ixt)} \big)_{-k}
 \ \ot \ 
  \big( \rmJ\Psi_{(-\ixt,\, \ixt + m')} \big)_{-k'}
\\ 
& \xrightarrow{\qquad \text{multiplication}\qquad}
&& 
\big( \rmJ\Psi_{(m - \ixt,\, \ixt+m')} \big)_{-k-k'}
\\
& \xrightarrow{\quad\quad\ \ \
(\tree_{(m-\ixt,\, \ixt+m')})^{-1} \ \ \ \ \ }
&& 
\hspace{-1ex} 
\prod_{\nu\geq 0}\
\rmJ_{k+k'+\nu,\, m+m'} 
\, .
\end{aligned}
$$
\end{proof}



\vspace{2ex}

We next consider
 a subvariety
of the space of all Rankin-Cohen brackets as 
considered by B\"ocherer~\cite{Boecherer}.

\begin{cor}
For each fixed $\nu\geq 0$,
the family of Rankin-Cohen brackets
$[\ \ , \ \ ]^{\ixt}_{(\nu)}$
defined in \cref{BracketOfJacobiForms}
as $\ixt$ ranges
over complex values defines
a subvariety of lines of 
expected dimension $1$ 
in the space of
all 
Rankin-Cohen brackets 
for Jacobi forms 
of fixed respective weights $k$, $k'\geq 2$ and indices
$m$, $m'> 0$.
\end{cor}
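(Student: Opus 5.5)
The plan has three parts: show that each bracket is a genuine element of Böcherer's space, show that it depends polynomially on $\ixt$, and then count dimensions.

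\textbf{Membership in Böcherer's space.} For fixed $\nu$, $k,k'\geq 2$ and $m,m'>0$, I would first check that $[\ \ ,\ \ ]^{\ixt}_{(\nu)}$ is a $\nu$-th Rankin-Cohen bracket in the sense of \cref{Rankin-CohenBracketConstruction}. Two facts are needed.

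1. It is a bilinear holomorphic differential operator with constant coefficients in $\del_\tau,\del_z$ applied to $\phi$ and $\phi'$. By \cref{SplittingExists}, each coefficient of $\tree_{k,\mlr}(\phi)$ is a constant-coefficient combination of $(\tfrac{\del}{\del z})^{n-2i}\bbL_{\ml}^i\phi$. The product formula \eqref{eq:multiplication-of-pseudo-differential-operators} only introduces further $z$-derivatives and binomial constants. The recursive inverse $\tree^{-1}$ from the proof of \cref{EquivariantIsomorphism} again subtracts such combinations. So only finitely many terms contribute to the coefficient of $\ddel^{-k-k'-\nu}$.

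2. It is $G$-equivariant. This follows from the equivariance in \cref{SplittingExists}, the inverse map in \cref{EquivariantIsomorphism}, and \cref{almostautos}. Here I apply everything to arbitrary holomorphic $\phi,\phi'$, not only to Jacobi forms.

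\textbf{Polynomial dependence on $\ixt$.} Next I would show that the coefficients $c_{a,b,c,d}(\ixt)$ are polynomials in $\ixt$ with denominators independent of $\ixt$. In the recursion for $c_{n,i}$, $\ixt$ enters only through $\mr$ and $\ml$: the factor $\mr/m$ has fixed $m$, and $\bbL_{\ml}$ is linear in $\ml$. The denominators $n(3-n-2k)$ are nonzero for $k\geq 2$ and do not involve $\ixt$. Hence
\[
\ixt\longmapsto[\ \ ,\ \ ]^{\ixt}_{(\nu)}
\]
is a polynomial map $\CC\to V_\nu$, where $V_\nu$ is Böcherer's space, of dimension $1+\lfloor\nu/2\rfloor$.

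\textbf{Dimension count.} The image of a polynomial curve is an irreducible constructible set of dimension at most $1$. Passing to the lines $\CC\cdot[\ \ ,\ \ ]^{\ixt}_{(\nu)}$ for $\ixt$ with nonzero bracket gives a subset of $\mathbb{P}(V_\nu)$. Its Zariski closure is a subvariety of dimension at most $1$.

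\textbf{Main obstacle.} The real content of ``expected dimension $1$'' is showing the curve of lines is not a single point, i.e.\ that $[\ \ ,\ \ ]^{\ixt}_{(\nu)}$ is not proportional to a fixed bracket for all $\ixt$. I would first compute the low cases explicitly from \cref{ClosedFormulaForTree}. For $\nu=2$, I would compare the ratio of the coefficient of $\tfrac{\del}{\del\tau}\phi\cdot\phi'$ to that of $\phi\cdot\tfrac{\del}{\del\tau}\phi'$, and show it varies with $\ixt$, because $\ml=m-\ixt$ appears in one and $\ixt$ in the other. I would also check that these are leading terms in $\ixt$ and do not cancel after applying $\tree^{-1}$.

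For general $\nu$, I would only assert the expected dimension. The justification would be a leading-order analysis in $\ixt$ of the top $\tau$-derivative terms, which generically are non-proportional. Where degeneracy might occur for $\nu\leq 1$, the space $V_\nu$ is $1$-dimensional, so the statement reduces to a point and is to be read as an upper bound.
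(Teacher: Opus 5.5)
Your proposal is correct and follows the paper's proof in substance. Both arguments rest on the same fact: each coefficient of $\tree$, of the product of pseudodifferential operators, and of the recursive inverse $\tree^{-1}$ is a constant-coefficient expression in $\tfrac{\del}{\del\tau}$ and $\tfrac{\del}{\del z}$ with coefficients polynomial in $\ixt$ (the denominators $m\,n(3-n-2k)$ do not involve $\ixt$), so the $\nu$-th brackets trace out the image of the affine $\ixt$-line.

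Your additional steps go beyond what the paper proves, since the paper only asserts the \emph{expected} dimension and does not treat the degenerate cases $\nu\le 1$. These extras are checking $G$-equivariance on all of $\holos$ so the bracket really lies in B\"ocherer's space, passing to the projectivization, and the $\nu=2$ non-degeneracy check. That check does go through: the coefficient of $\phi\,\tfrac{\del\phi'}{\del\tau}$ vanishes at $\ixt=-m'$ and that of $\phi'\,\tfrac{\del\phi}{\del\tau}$ vanishes at $\ixt=m$, while the other coefficient is nonzero at each of these points.
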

\begin{proof}
First note that the map
$\tree_{k, \mlr}$
of \cref{InducesLinearSplitting}
takes each Jacobi form $\phi$
to a pseudodifferential
operator whose
coefficient of each $\ddel^n$
is a finite sum of 
a finite number of 
compositions
of 
partial derivatives $\del/\del\tau$
and $\del/\del z$ 
applied to $\phi$
with coefficients
in $\QQ$,
see \cref{ClosedFormulaForTree}. 
The $\nu$-th Rankin-Cohen bracket
of \cref{BracketOfJacobiForms} is defined
by applying the maps 
$\tree_{k,(m-\ixt, \ixt)}$
and $\tree_{k', (-\ixt, \ixt + m')}$
of 
\cref{InducesLinearSplitting}
to two input Jacobi forms
$\phi$ and $\phi'$ respectively,
multiplying the resulting pseudodifferential
operators, and pulling back via 
the map 
$\tree^{-1}_{m-\ixt, \ixt + m'}$
of \cref{cor:isomorphism-to-pseudo-differential-jacobi-forms}.
We can thus express 
the coefficient
of each $\ddel^n$
in each pseudodifferential operator
as a finite sum
of mixed partial derivatives 
with each coefficient 
in $\QQ[\ixt]$,
and the same can be said
for their product.
Pulling
back via
$\tree^{-1}_{m-\ixt, \ixt + m'}$
gives
a Jacobi form
for each $\nu$
expressed as a finite
sum of mixed
 partial derivatives 
 in $\del/\del\tau$
and $\del/\del z$  
of the original Jacobi
forms 
with coefficients 
in $\QQ[\ixt]$
by the proof of
\cref{EquivariantIsomorphism},
and we obtain the advertised subvariety.
\end{proof}
\vspace{2ex}

\subsection{Iterative approach
to Rankin-Cohen brackets}
The proof of \cref{EquivariantIsomorphism}
gives a recursive construction
for the Rankin-Cohen bracket
of two Jacobi forms 
which we review now.
Suppose Jacobi forms $\phi$, $\phi'$ 
have respective weights $k$, $k'\geq 2$
and indices $m$, $m'> 0$.
The zero-th Rankin-Cohen bracket is
just their product:
$$\left[\phi,\phi'\, \right]^{\ixt}_{(0)}=\phi\cdot \phi'
\qquad\text{ is 
a Jacobi form of weight $k+k'$ and index $m+m'$.}
$$
We observe that the top terms
cancel in $\tree(\phi)\tree(\phi')$
and $\tree(\phi\, \phi')$
and the first Rankin-Cohen bracket
$\left[\phi,\phi'\, \right]^{\ixt}_{(1)}$
is the new top coefficient of their
difference.
Explicitly,
both
$$
\Upsilon_{k,(m-\ixt,\ixt)}(\phi)
\ \cdot\ 
\Upsilon_{k',(-\ixt, \ixt+m')}(\phi')
\qquad\text{ and }\qquad
\Upsilon_{k+k',(m-\ixt,\ixt+m')}(\phi\,\phi')
$$
are
$(\phi\,\phi')\, \ddel^{-k-k'}$
modulo lower order terms, i.e.,
modulo
$\poh_{-k-k'-1}$,
hence
\begin{equation}\label{FirstIterationStep}
\Upsilon(\phi)\cdot
\Upsilon(\phi')
-\Upsilon\big(\phi\cdot\phi'\big)\  
\ \ := \ \
\Upsilon_{k,(m-\ixt,\ixt)}(\phi)
\ \cdot\ 
\Upsilon_{k',(-\ixt, \ixt+m')}(\phi')
\ -\ 
\Upsilon_{k+k',(m-\ixt, \ixt+m')}(\phi\,\phi')
\end{equation}
is just 
$
\phi''
\ddel^{-k-k'-1} $
modulo $\poh_{-k-k'-2}$
for some unique $\phi'':=\left[\phi,\phi'\, \right]^{\ixt}_{(1)}$ 
in 
$\holos$:
$$
\left[\phi,\phi'\, \right]^{\ixt}_{(1)}
:=
\pi_{-k-k'-1}
\Big(
\Upsilon_{k,(m-\ixt, \ixt)}(\phi)\cdot
\Upsilon_{k', (-\ixt, \ixt+m')}(\phi')
\ -\ 
\Upsilon_{k+k', (m-\ixt,\ixt+m')}
\,
\left[\phi,\phi'\right]^{\ixt}_{(0)}\Big)\ 
\, .
$$
Using $\tree$, we lift 
$[\phi,\phi']_{(1)}^{\ixt}$
to a pseudodifferential operator 
whose top term cancels with that of
\cref{FirstIterationStep}
and set $[\phi,\phi']_{(2)}^{\ixt}$
to be the new top coefficient of the difference.
We continue inductively,
subtracting off 
$\tree_{k+k'+i, (m-\ixt, \ixt + m')}(
[\phi,\phi']_{(i)}^{\ixt})$
at each step of the recursion,
using
the projection map
$\pi$
to pick off the top coefficient
at each step.
The proof of
\cref{EquivariantIsomorphism}
thus gives the next corollary.

\begin{cor}
For Jacobi
forms $\phi, \phi'$,
the Rankin-Cohen bracket may be given iteratively
by
$$
\left[\phi,\phi'\right]_{(\nu)}
\ :=\ \pi\Big(\
\Upsilon(\phi)\cdot\Upsilon(\phi')
-\sum_{j=0}^{\nu-1} \Upsilon\big( \left[\phi,\phi'\right]_{(j)}\big)\Big)
\, ,
$$
i.e.,
for 
$\phi, \phi'$ of
respective weights $k, k'\geq 2$ in $\ZZ$
and indices $m, m'> 0$ in $\ZZ$, 
and any parameter $\ixt$ in $\CC$,
$$
\left[\phi,\phi'\right]^{\ixt}_{(\nu)}
\ :=\ 
\pi\Big(
\Upsilon_{k,(m-\ixt, \ixt)}(\phi)\cdot
\Upsilon_{k', (-\ixt, \ixt+m')}(\phi')
\ -\ \sum_{j=0}^{\nu-1} 
\Upsilon_{k+k'+j, (m-\ixt,\ixt+m')}
\,
\left[\phi,\phi'\right]^{\ixt}_{(j)} \Big)\ 
$$
for $\pi$
the projection
onto the coefficient of $\ddel^{-k-k'-\nu}$.

\end{cor}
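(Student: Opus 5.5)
The plan is to unwind the definition of the bracket in \cref{BracketOfJacobiForms} through the explicit inverse map built in the proof of \cref{EquivariantIsomorphism}. Write $\Psi:=\Upsilon_{k,(m-\ixt,\ixt)}(\phi)\cdot\Upsilon_{k',(-\ixt,\ixt+m')}(\phi')$. By \cref{almostautos} and \eqref{product}, $\Psi$ lies in $\big(\rmJ\Psi_{(m-\ixt,\,\ixt+m')}\big)_{-k-k'}$. By definition, the bracket $[\phi,\phi']^{\ixt}_{(\nu)}$ is the coefficient of $X^{-k-k'-\nu}$ in $\Upsilon^{-1}_{(m-\ixt,\ixt+m')}(\Psi)$. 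The proof of \cref{EquivariantIsomorphism} gives this inverse recursively:
\[
\phi'_{K}=\pi_{-K}\Big(\Psi-\sum_{n<K}\Upsilon_{n,(m-\ixt,\ixt+m')}(\phi'_n)\Big).
\]
Here the coefficients $\phi'_n$ with $2\le n<k+k'$ vanish, because $\Psi\in\poh_{-k-k'}$. So the first step is to check inductively that $\phi'_n=0$ for $n<k+k'$. Since $\Upsilon_{n}(0)=0$, every such term drops out of the sum, and the recursion starts at $K=k+k'$.

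Next, I would reindex with $K=k+k'+\nu$ and $n=k+k'+j$. The recursion then reads
\[
\phi'_{k+k'+\nu}=\pi_{-k-k'-\nu}\Big(\Psi-\sum_{j=0}^{\nu-1}\Upsilon_{k+k'+j,(m-\ixt,\ixt+m')}\big(\phi'_{k+k'+j}\big)\Big).
\]
A strong induction on $\nu$ then identifies $\phi'_{k+k'+j}$ with $[\phi,\phi']^{\ixt}_{(j)}$ for all $j<\nu$, and hence identifies the right-hand side with the displayed formula. This is exactly the claimed iterative formula.

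I also have to make sure that $\pi$ picks a top coefficient, i.e.\ that the difference lies in $\poh_{-k-k'-\nu}$. This follows by induction from $\pi_{-K}\circ\Upsilon_K=\mathrm{Id}$ (\cref{SplittingExists}), together with $\Upsilon_K(\cdot)\in\poh_{-K}$. Each subtraction kills the current top coefficient, leaving an element of the next filtered piece.

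The main obstacle is verifying that the recursion in the proof of \cref{EquivariantIsomorphism} really produces the inverse. That proof only sketches this (``one may check''), so I would supply the induction modulo $\poh_{-K}$. Suppose that, for every $K$, $\Psi$ is congruent modulo $\poh_{-K-1}$ to $\sum_{n\le K}\Upsilon_n(\phi'_n)$. Then completeness of the filtration (\cref{ExampleCompleteFiltration}) gives equality in the limit, and $\Upsilon_{(m-\ixt,\ixt+m')}$ sends $\sum\phi'_nX^{-n}$ to $\Psi$. Uniqueness follows from injectivity of $\Upsilon_{(m-\ixt,\ixt+m')}$, which holds because its top coefficients recover the $\phi'_n$ one at a time. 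With these checks in place, the corollary is a restatement of the coordinates of $\Upsilon^{-1}(\Psi)$.
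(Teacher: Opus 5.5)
Your proposal is correct and follows the paper's own route. The paper observes that the bracket is, by definition, the coordinates of $\Upsilon^{-1}$ applied to the product $\Upsilon(\phi)\cdot\Upsilon(\phi')$, and that the recursive inverse built in the proof of \cref{EquivariantIsomorphism} is exactly the stated iterative formula. Your additional checks fill in what the paper leaves as ``one may check'': the vanishing of the coordinates below $k+k'$, that each difference lies in the correct filtered piece, and the induction modulo $\poh_{-K-1}$ combined with $\bigcap_K \poh_{-K}=0$ showing that the recursion really inverts $\Upsilon$.
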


\vspace{2ex}

\begin{remark}
{\em
We may 
rephrase \cref{BracketOfJacobiForms}
using this iterative approach:
For Jacobi forms $\phi$, $\phi'$,
$$
\Upsilon(\phi)
\ \Upsilon(\phi')\ =\ 
\sum_{\nu\geq 0}\Upsilon\big(\, [\phi,\phi']_{(\nu)}\, \big)
\, ,
$$
i.e., for $\phi$, $\phi'$ of
respective weights $k,k'\geq 2$ and indices
$m,m'> 0$ and
for any parameter $\ixt\in \CC$,
$$
\Upsilon_{k,(m-\ixt, \ixt)}(\phi)
\cdot 
\Upsilon_{k',(-\ixt, \ixt+m'})(\phi')
\ =\ 
\sum_{\nu\geq 0}\tree_{k+k'+\nu,\, (m-\ixt, \, \ixt+m')}\big(\, [\phi,\phi']^{\ixt}_{(\nu)}\, \big)
\, .
$$
}
\end{remark}

\vspace{2ex}


\subsection{Explicit formulas for the initial brackets}
\label{ExplicitBrackets}
To give the first and second brackets explicitly, we use the constants 
\begin{gather*}
  c_{n, k, \mlr}
:=\ 
  \frac{2 \mr (k+n - 1) (k+n - 2)}{ (\ml+\mr) n (3 - n - 2k)}
\ \tx{}
\end{gather*}
and abbreviate
$$
a_n=c_{n,k,(m-\ixt, \ixt)},
\ \ 
a'_n=c_{n,k',(-\ixt, \ixt + m')},
\ \ 
b_n=c_{n,k+k',(m-\ixt, \ixt+m')},
\ \ 
d=c_{1,k+k'+1,(m-\ixt, \ixt+m')}
\, .
$$
Then for Jacobi forms $\phi$ and $\phi'$ of respective weights $k, k'\geq 2$ and indices
$m,m'>0$, 
the first bracket is 
$$
\begin{aligned}
[\phi, \phi']_{(1)}^{\ixt}
\ =\ \
&
\ \big(
a'_1- b_1-k
\big)\
\phi \, \tfrac{\del \phi'}{\del z}
+
\big(
a_{1}-b_{1}
\big)\
\phi'\, \tfrac{\del \phi}{\del z}
\\
\ =\ \
&
-
\sfrac{km'(m-\ixt)+k'm (m'+\ixt)}
{m'(m+m')}
\ \phi \, \tfrac{\del \phi'}{\del z}
+
\sfrac{k m'(m-\ixt)+ k'm(m'+\ixt)}{m(m+m')}
\ \phi' \, \tfrac{\del \phi}{\del z}
\\
\ =\ \
&
-\sfrac{
k m'(m-\ixt)+ k'm(m'+\ixt)}{m m'(m+m')}
\big(
m\ \phi \, \tfrac{\del \phi'}{\del z}
+
m'\ \phi'\, \tfrac{\del \phi}{\del z}
\big)
\, .
\end{aligned}
$$
The second bracket is
$$
\begin{aligned}
[\phi,\phi']^{\ixt}_{(2)}
\ \ \ =\ \ \ \ \ 
&  
\big( \tfrac{1}{2}
k(k+1)
-ka'_1
+a'_2 (a'_1 + \tfrac{1}{2})
- b_2( b_1+\tfrac{1}{2})
- d(a'_1-b_1-k)
\big)\
\phi\  \tfrac{\del^2 \phi'}{\del z^2}
\\
+ &
\big( 
a_2(a_1 + \tfrac{1}{2})
- b_2 (b_1+\tfrac{1}{2})
-d(a_1-b_1)
\big)\
\phi'\ \tfrac{\del^2 \phi }{\del z^2}
\\
+ &
\big( 
a_1 a'_1 - a_1(k+1)
- b_2 ( 2b_1 +1)
- d(a_1+a'_1-2b_1-k)
\big)\
\tfrac{\del \phi}{\del z}
\,
\tfrac{\del \phi'}{\del z}
\\
+ &
4\pi i\big( 
 \ixt a'_2
+(m-\ixt)b_2
 \big)\
\phi \ \tfrac{\del \phi'}{\del \tau}
\\
+ &
4\pi i
 (\ixt-m)
( a_2-b_2 )\
\phi' \ \tfrac{\del \phi}{\del \tau}
\, .
\end{aligned}
$$
In
terms of the heat operator 
$\bbL_{m}
=8\pi i m \tfrac{\del}{\del \tau}
- \tfrac{\del^2}{\del z^2}$,
this is just
$$
\begin{aligned}
[\phi,\phi']^{\ixt}_{(2)}
\ \ \ =\ \ \ \ \ 
&  
\big( \tfrac{1}{2} k(k+1)
-ka'_1
+ a'_1  a'_2 - b_1 b_2
- d(a'_1-b_1-k)
\big)\
\phi\  \tfrac{\del^2 \phi'}{\del z^2}
\\
+ &
\big( 
a_1 a_2 
- b_1 b_2
-d(a_1-b_1)
\big)\
\phi'\ \tfrac{\del^2 \phi }{\del z^2}
\\
+ &
\big( 
a_1 a'_1 - a_1(k+1)
- 2 b_1 b_2
- d(a_1+a'_1-2b_1-k)
\big)\
\tfrac{\del \phi}{\del z}
\,
\tfrac{\del \phi'}{\del z}
\\
- &
\tfrac{1}{2}\, a'_2\,  \phi \ \bbL_{-\ixt}(\phi')
-
\tfrac{1}{2}\, a_2\, \phi'\ \ \bbL_{m-\ixt}(\phi)
+
\tfrac{1}{2}\, b_2\ \bbL_{m-\ixt}(\phi\, \phi')
\, .
\end{aligned}
$$

\vspace{2ex}

\appendix
\section{Jacobi Lie algebra
and finding the Casimir operator}
\label{FindingCasimir}
We explain here the theory behind
the construction of the Casimir operator $\mathcal{C}_{k,\mlr}$
in \cref{CasimirDefn}.
We use the action of the Jacobi Lie algebra $\mathfrak{g}$
and identify covariant operators
used to construct 
$\mathcal{C}_{k,\mlr}$ from a Casimir
element $\Omega$ in the universal enveloping algebra $\mathcal{U}(\mathfrak{g})$
for the complexified Lie algebra $\mathfrak{g}$ of the real Jacobi Lie group $G$.

\subsection{Lie algebra of the extended real Jacobi group}
We identify $\frakgJ$ with the
tangent space of $G$ at the identity so that
$$
\frakgJ
=
\big\{(M,X,\kappa)
:M\in\frak{sl}_2(\CC), X\in\CC^2,
\kappa\in \CC\big\}\, 
$$
with Lie bracket given by (see~\cite{BringmannConleyRichter})
$$
\big[(M,X,\kappa),(M',X',\kappa') \big]
=
\big(MM'-M'M,\ XM'-X'M,\
2 \det
\big[\begin{smallmatrix}X\\X'\end{smallmatrix}
\big]
\big)\, 
$$
and exponential map 
given in Conley and Raum~\cite{ConleyRaum}.
Here $\frakgJ \cong 
\frak{sl}_2(\CC)\ltimes {\frak H}_3$
where ${\frak H}_3$ is the complexified Lie
algebra of the real Heisenberg Lie group
$H_3(\RR)$. 
(Note that Bringmann, Conley, and Richter~\cite{BringmannConleyRichter}
consider the Lie algebra of the extended complex
Jacobi Lie group instead.)
The Lie algebra $\frakgJ$ has $\CC$-basis
$$
\begin{aligned}
   E &=\big(\big[\begin{smallmatrix} 0 &1\\ 0&0 \end{smallmatrix}\big],(0, 0),0\big)\, ,
&\ F &=\big(\big[\begin{smallmatrix} 0 &0\\ 1&0 \end{smallmatrix}\big], (0,0),0\big)\, ,
&\ H &=\big(\big[\begin{smallmatrix} 1 &\ \, 0\\ 0&-1 \end{smallmatrix}\big], (0,0),0\big)\, ,
\\
     e &=\big(\big[\begin{smallmatrix} 0 &0\\ 0&0 \end{smallmatrix}\big],(0,1),0\big)\, ,
& \  f &=\big(\big[\begin{smallmatrix} 0 &0\\ 0&0 \end{smallmatrix}\big],(1,0),0\big)\, ,
& \  Z &=\big(\big[\begin{smallmatrix} 0 &0\\ 0&0 \end{smallmatrix}\big],(0,0),1\big)\, .
\end{aligned}
$$

\vspace{1ex}

\subsection{Alternate basis of Lie algebra}
\label{AlternateBasis}
We use an alternate $\CC$-basis of $\frakgJ$
from~\cite{ConleyRaum} 
(see~\cite{BerndtSchmidt})
to define covariant operators
(as these span the characters
of the compact group
$\mathrm{SO}_2(\RR)$
embedded in the Jacobi group
acting via the adjoint representation).
Define
\begin{equation*}
\begin{aligned}
  \tilde{E}
 = \tfrac{1}{2} \big( H + i (E + F) \big)
\text{,} \ \ 
\tilde{e}
=  \tfrac{1}{2} \big(f+i e  \big)
\text{,}\ \ 
\tilde{H}
= i (F - E)
\text{,}\ \ 
\tilde{F}
= \tfrac{1}{2} \big( H - i (E + F) \big)
\text{,}\ \ 
\tilde{f}
 = \tfrac{1}{2} \big(f-i e  \big)
\text{,}\ \ 
\tilde{Z} = \tfrac{1}{2}iZ
\text{.}
\end{aligned}
\end{equation*}
\vspace{1ex}

\subsection{Lie algebra action}
%
Suppose the connected Lie group $G$ 
acts smoothly on the right on some complex vector space $W$
with action denoted via a slash operator:
$(w,g) \mapsto w \, \big|\, g$
for $w$ in $W$, $g$ in $G$.
Recall that the {\em differential action}
on $W$ of the complexified Lie algebra $\frak{g}$
of $G$ given by 
\begin{gather*}
   w\ \big|\ X 
\ =\ \tfrac{d}{ds}\Big|_{s=0}\big( w\ \big| \ \exp(sX) \big)
\quad\quad\text{ for } 
w \in W,\ X \in \frak{g}
\, 
\end{gather*}
defines a Lie algebra representation 
of $\mathfrak g$. 
We compute the explicit
action of $\mathfrak g$
on $\po$
at the marked point $(i,0)$ of $\HJ$
to simplify calculations.
These
generalize actions given in 
Berndt and Schmidt~\cite[Section 3.5]{BerndtSchmidt}
on $C^{\infty}(\HJ)$.

\begin{lemma}
\label{la:lie-algebra-action-at-origin}
Fix some $k,\ml, \mr$ in $\CC$.  
At $(\tau, z) = (i, 0)$,
for slash action $|_{k, \mlr}$
denoted simply by $|\, $,
\begin{small}
\begin{align*}
\phi\, \ddel^n\ | \ E
&= 
  (\phi_{\tau} + \phi_{\ov \tau})\, \ddel^n
\,,
&
\phi\, \ddel^n\ | \ F
&=
 \phi_\tau + \phi_{\ov \tau} + i (n - k) \phi 
\, \ddel^n
  - 2 \pi i\, \mr n(n-1) \phi\, \ddel^{n-2}
\,,\\
\phi\, \ddel^n\ | \ e
&=
  (\phi_z + \phi_{\ov z})\, \ddel^n
\,,
&
\phi\, \ddel^n\ | \ f
&=
  ( i \phi_z - i \phi_{\ov z} )\, \ddel^n
  + 4\pi i\, \mr n\, \phi\, \ddel^{n-1}
\,,\\
\phi\, \ddel^n\ | \ H
&=
  \big( 2 i \phi_{\tau} - 2 i \phi_{\ov \tau} - (n - k) \phi \big)\, \ddel^n
\,,&
\phi\, \ddel^n\ | \ Z
&=
  2\pi i (\ml+\mr) \phi\, \ddel^n
\text{.}
\end{align*}
\end{small}
\end{lemma}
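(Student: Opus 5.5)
We will compute each action directly from the definition $\psi\,|\,X=\frac{d}{ds}\big|_{s=0}\big(\psi\,|_{k,\mlr}\exp(sX)\big)$, using the explicit form of the slash action in \cref{action}. Concretely, $\phi\,\ddel^n\,|\,g=\beta^{k-n}(g)\,\alpha_{\ml}(g)\,(\phi\circ g)\,\ddel^n\,\alpha_{\mr}(g)$. By the product rule, the derivative splits into three contributions. The first comes from differentiating the scalar factor $\beta^{k-n}\alpha_{\ml}$ and gives a multiple of $\phi\,\ddel^n$. The second comes from $\phi\circ\exp(sX)$ and gives a first-order derivative of $\phi$ times $\ddel^n$. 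The third comes from differentiating the right factor $\ddel^n\alpha_{\mr}(\exp(sX))$. For this last term we will not expand fully. Instead we use the commutation $\ddel^n\alpha_{\mr}=\alpha_{\mr}(\ddel+c_1z+c_2)^n$ from the proof of \cref{cor:action-on-pseudo-differential-operators}, where $c_1,c_2$ depend on $s$ and vanish at $s=0$. By \cref{triplesum}, the derivative at $s=0$ of $(\ddel+c_1z+c_2)^n$ is linear in $c_1',c_2'$. Only the terms $(t,s,r)=(1,0,0)$, $(0,1,0)$ and $(0,0,1)$ survive, contributing $n\,c_1'z\,\ddel^{n-1}$, $n\,c_2'\,\ddel^{n-1}$ and $n(n-1)c_1'\,\ddel^{n-2}$ (note $2^r\Gamma(r+\frac12)/\Gamma(\frac12)=1$ for $r=1$, and $\binom{n}{0,0,2,n-2}=n(n-1)/2$). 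At $z=0$ the $z$-term drops, leaving $n\,c_2'\,\ddel^{n-1}+n(n-1)c_1'/2\cdot\ddel^{n-2}$ after recomputing the exact multinomial factor. We will also add $(\alpha_{\mr})'$ at $s=0$ times $\ddel^n$.

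Next we write the one-parameter subgroups explicitly. For $E$ this is $\big(\left[\begin{smallmatrix}1&s\\0&1\end{smallmatrix}\right],0,0\big)$; for $F$ it is $\big(\left[\begin{smallmatrix}1&0\\s&1\end{smallmatrix}\right],0,0\big)$; for $H$ it is $\big(\mathrm{diag}(e^s,e^{-s}),0,0\big)$. For $e$ and $f$ it is $(I,(0,s),0)$ and $(I,(s,0),0)$, respectively, with the central correction from the group law being $O(s^2)$ or zero. For $Z$ it is $(I,0,s)$. For each subgroup we differentiate $g_s(\tau,z)$ at $s=0$ and at $(\tau,z)=(i,0)$. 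For example, $F$ gives $\tau\mapsto\tau/(s\tau+1)$, so $d\tau/ds=-\tau^2=1$ at $\tau=i$, and $z\mapsto z/(s\tau+1)$, so $dz/ds=-z\tau=0$ at the marked point. By the chain rule for smooth functions, $\frac{d}{ds}\phi\circ g_s=\phi_\tau\dot\tau+\phi_{\ov\tau}\dot{\ov\tau}+\phi_z\dot z+\phi_{\ov z}\dot{\ov z}$. For $F$ this gives $\phi_\tau+\phi_{\ov\tau}$, as claimed. Similarly, $\beta^{k-n}=(s\tau+1)^{n-k}$ has derivative $(n-k)\tau=i(n-k)$.

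The cocycle derivatives come from the formula for $\alpha_m$. For $F$ we have $c=s$, $\lambda=\mu=0$, so the exponent is $2\pi i m\,(-s z^2/(s\tau+1))$. This vanishes to first order at $z=0$, but the relevant derivative $c_1'$ is $4\pi i\mr\cdot(-1)$, which produces the $\ddel^{n-2}$ term $-2\pi i\mr n(n-1)\phi\,\ddel^{n-2}$. For $f$ ($\lambda=s$) the derivative $c_2'=4\pi i\mr$ produces $4\pi i\mr n\,\phi\,\ddel^{n-1}$. Moreover $\dot z=\tau=i$, so $\dot{\ov z}=-i$, giving $i\phi_z-i\phi_{\ov z}$. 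The scalar derivative of $\alpha_{\ml}\alpha_{\mr}$ from $\lambda^2\tau+2\lambda z$ vanishes at $z=0$ to first order. For $e$ ($\mu=s$) we get $c_2'=4\pi i\mr\,(d\lambda-c\mu)'=0$, and the only contribution is $\phi_z+\phi_{\ov z}$. For $Z$ only $\kappa$ moves, giving $2\pi i(\ml+\mr)$. For $H$ we have $\dot\tau=2\tau=2i$, so $\dot{\ov\tau}=-2i$, and $\beta^{k-n}=e^{s(k-n)}$ gives $-(n-k)$.

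The main obstacle is bookkeeping in the right-cocycle term. Two things need care: the correct combinatorial factor from \cref{triplesum} for the $r=1$ term, and the fact that the $c_1z$ term vanishes only after evaluation at $z=0$, while $c_1$ itself contributes through $r=1$. We will double-check these signs by comparing with \cref{cor:action-on-pseudo-differential-operators}, differentiated in $g$, rather than recomputing from scratch.
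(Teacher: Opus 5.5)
Your proposal is correct and is essentially the computation the paper intends. The paper states this lemma without a written proof and treats it as a direct calculation from \cref{action}. Using the conjugation $\ddel^n\alpha_{\mr}=\alpha_{\mr}(\ddel+c_1z+c_2)^n$ together with \cref{triplesum} to isolate the terms linear in $c_1,c_2$ (then setting $z=0$) is a clean way to handle the right-cocycle contribution. The only slip is in your first paragraph, where the $r=1$ term is first written as $n(n-1)c_1'\,\ddel^{n-2}$; the factor you then correct to, $\tfrac{n(n-1)}{2}c_1'$ with $c_1'=-4\pi i\mr$, is the right one and gives exactly $-2\pi i\mr n(n-1)\phi\,\ddel^{n-2}$ for $F$.
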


Direct computation 
using \cref{la:lie-algebra-action-at-origin} gives the following
actions for the alternate basis
of $\mathfrak{g}$.
\begin{lemma}
\label{lemma:lie-algebra-action-at-origin-raising-and-lowering-elements}
Fix some $k,\ml, \mr$ in $\CC$.  
At $(\tau, z) = (i, 0)$,
for slash action $|_{k, \mlr}$
denoted simply by $|\, $,
\begin{small}
\begin{align*}
  \phi\, \ddel^n\, | \ \tilde{E} 
&=
  \big( 2 i \phi_\tau + (k - n) \phi \big)\, \ddel^n + \pi \mr n (n - 1) \phi\, \ddel^{n-2}
\text{,}
&& \phi\ \ddel^n \, | \ \tilde{F} 
=  - 2 i \phi_{\ov \tau}\, \ddel^n
  - \pi \mr n (n - 1) \phi\, \ddel^{n-2}
\text{,}
\\
\phi\, \ddel^n\, | \ \td{e} 
&=
  i \phi_z\, \ddel^n
  + 2 \pi i \mr n \, \phi\, \ddel^{n-1}
  \text{,}
&& \phi\ \ddel^n \, | \ \, \td{f} 
\, = 
  - i \phi_{\ov z}\, \ddel^n
  + 2 \pi i \mr n \, \phi\, \ddel^{n-1}
\text{,}
\\
\phi\, \ddel^n\, | \ \td{H} 
&=
  (k - n) \phi\, \ddel^n
  + 2 \pi \mr n (n  - 1)\, \phi\, \ddel^{n-2}
  \text{,}
&& \phi \, \ddel^n\, | \ \td{Z} 
 = 
  -\pi  (\ml+\mr) \phi\, \ddel^n
\, \text{.}
\end{align*}
\end{small}
\end{lemma}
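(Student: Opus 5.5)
The plan is to obtain the six formulas by linear recombination of the formulas in \cref{la:lie-algebra-action-at-origin}. The differential action $w \mapsto w\,|\,X$ is $\CC$-linear in $X$, since it is the complexification of a Lie algebra representation. Hence for each element of the alternate basis of \cref{AlternateBasis} we only need to substitute its defining combination of $E,F,H,e,f,Z$ and collect terms at the point $(\tau,z)=(i,0)$.

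First, for $\tilde E=\tfrac12\big(H+i(E+F)\big)$ we add the $H$-formula to $i$ times the sum of the $E$- and $F$-formulas. We read the $F$-formula as $\big(\phi_\tau+\phi_{\ov\tau}+i(n-k)\phi\big)\ddel^n-2\pi i\,\mr n(n-1)\phi\,\ddel^{n-2}$. In the $\ddel^n$ coefficient of $H+i(E+F)$:
\begin{itemize}
\item the $\phi_{\ov\tau}$ terms are $-2i\phi_{\ov\tau}+2i\phi_{\ov\tau}=0$;
\item the $\phi_\tau$ terms are $2i\phi_\tau+2i\phi_\tau=4i\phi_\tau$;
\item the scalar terms are $-(n-k)\phi+i\cdot i(n-k)\phi=2(k-n)\phi$.
\end{itemize}
The $\ddel^{n-2}$ coefficient is $i\cdot(-2\pi i\,\mr n(n-1))\phi=2\pi\mr n(n-1)\phi$. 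Halving gives the stated $\tilde E$ formula. The computation for $\tilde F=\tfrac12\big(H-i(E+F)\big)$ is identical with the signs reversed:
\begin{itemize}
\item the $\phi_\tau$ terms cancel and $-2i\phi_{\ov\tau}$ survives;
\item the scalar terms $-(n-k)\phi+(n-k)\phi$ cancel;
\item the $\ddel^{n-2}$ coefficient becomes $-\pi\mr n(n-1)\phi$.
\end{itemize}

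For $\tilde H=i(F-E)$, the first-order terms $\phi_\tau+\phi_{\ov\tau}$ cancel between $F$ and $E$. The scalar term is $i\cdot i(n-k)\phi=(k-n)\phi$, and the $\ddel^{n-2}$ term is $i(-2\pi i\,\mr n(n-1))\phi=2\pi\mr n(n-1)\phi$.

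For $\tilde e=\tfrac12(f+ie)$ and $\tilde f=\tfrac12(f-ie)$, the $\ddel^n$ coefficients are
\[
\tfrac12\big(i\phi_z-i\phi_{\ov z}\pm i(\phi_z+\phi_{\ov z})\big),
\]
which equals $i\phi_z$ for the plus sign and $-i\phi_{\ov z}$ for the minus sign. The $\ddel^{n-1}$ term $\tfrac12\cdot 4\pi i\,\mr n\phi=2\pi i\,\mr n\phi$ comes only from $f$, so it is the same in both cases. Finally, $\tilde Z=\tfrac12 iZ$ acts by $\tfrac i2\cdot 2\pi i(\ml+\mr)=-\pi(\ml+\mr)$.

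There is no real obstacle here, only bookkeeping. The one point needing care is the intended reading of the $F$-formula in \cref{la:lie-algebra-action-at-origin}: the whole expression $\phi_\tau+\phi_{\ov\tau}+i(n-k)\phi$ multiplies $\ddel^n$, as the $H$-formula confirms by analogy. The other is tracking the factors $i^2=-1$, which produce the sign flips in the $\ddel^{n-2}$ terms.
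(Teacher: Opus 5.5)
Your proposal is correct and is essentially the paper's own argument: the paper obtains these six formulas by direct computation, substituting the defining combinations of $\tilde E,\tilde F,\tilde e,\tilde f,\tilde H,\tilde Z$ into the formulas of \cref{la:lie-algebra-action-at-origin} and using linearity of the differential action in $X$. Your term-by-term bookkeeping checks out. This includes reading the $F$-formula with $\phi_\tau+\phi_{\ov\tau}+i(n-k)\phi$ multiplying $\ddel^n$, and the sign flips from $i^2=-1$ in the $\ddel^{n-2}$ terms.
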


\vspace{1ex}

\subsection{Covariant differential operators}
We follow the strategy of~\cite{ConleyRaum}
(see also Conley and Dahal~\cite{ConleyDahal})
which extends
Helgason's treatment 
(see~\cite{Helgason1977}) 
of the reductive case
to the nonreductive Jacobi group.
We consider operators analogous
to the covariant differential operators
$\rm{CDO}_{\gamma, \gamma'}$
constructed in~\cite{ConleyRaum}
and~\cite{BringmannConleyRichter}.
For the next definition, we fix
a choice of section 
$s:\HJ\rightarrow \GJ$ 
to the map $\GJ\rightarrow \HJ$, $g\mapsto g(i,0)$,
with 
$s(i,0)=1_G$ 
the identity,
so $s(\tau, z)(i,0)=(\tau,z)$.
We use the adjoint action of $g$ in $G$ on $X$ in $\mathfrak{g}$
denoted by $gXg^{-1}=\text{ad}_g(X)
=\tfrac{d}{dt}\big|_{t=0} \,g \exp(tX)
g^{-1}
\, .
$
\begin{definition}{\em
For any $X$ in $\frakgJ$, define an operator
\begin{gather*}
\begin{aligned}
\mathcal{D}_{k,\mlr}(X):
\ \ \ 
\po
&\rightarrow \po 
\qquad\qquad\qquad\qquad\text{ by }
\\
\phi(\tau,z)\, \ddel^n
\ \ &\mapsto \ \
\big(\phi\, \ddel^n \ \ \big|_{k,\mlr}\, gXg^{-1} \big)(\tau,z)
\end{aligned}
\end{gather*}
for all $(\tau,z)\in \HJ$ and
$\phi\in C^{\infty}(\HJ)$,
for $g=s(\tau,z)\in\GJ$
(so that $g(i,0)=(\tau, z))$.
}
\end{definition}

\vspace{2ex}

\begin{proposition}
\label{AltBasisIsRaisingLowering}
The raising, lowering, and constant operators on $\po$ of \cref{def:raisingandlowering}
arise from the alternate basis 
$\td{E},\td{F},\td{e},\td{f},\td{H},\td{Z}$
of the Lie algebra $\frakg$:
For any $k, \ml, \mr$ in $\ZZ$,
with subscript
${k, \mlr}$ suppressed,
$$
\begin{aligned}
  &\mathcal{D}(\td{E})\ 
  &=&   
  &&\Rmod  \, ,
\quad  
&&\mathcal{D}(\td{F})\ 
&=&
&&\Lmod  \ ,
\\
  &\mathcal{D}(\td{e})\ 
  &=&
  &&\Rell \, ,
&&\mathcal{D}(\td{f})\ 
&=& 
&&\Lell \, ,
\\
  &\mathcal{D}(\td{H})\ 
  &=&
  &&\Cmod \,  ,
&&\mathcal{D}(-2i\td{Z})\ 
&=& 
&&\Cell
 \, 
\end{aligned}
$$
as operators $\po\rightarrow \po$.
\end{proposition}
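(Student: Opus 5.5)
We must show $\mathcal{D}(X)$ equals the stated operator for each basis element $X$. The approach is to first reduce to the marked point $(i,0)$ using covariance, and then read off the answer from \cref{lemma:lie-algebra-action-at-origin-raising-and-lowering-elements}.

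\emph{Reduction to the marked point.} For $g=s(\tau,z)$ we have
\[
\big(\psi\,\big|\,gXg^{-1}\big)(\tau,z)=\tfrac{d}{ds}\big|_{s=0}\big(\psi\,|\,g\exp(sX)g^{-1}\big)(g(i,0)).
\]
Using that $|$ is a right action, we rewrite this as
\[
\big(\big((\psi|g)|X\big)\,\big|\,g^{-1}\big)\big(g(i,0)\big).
\]
Unwinding \cref{action}, the value of $\eta|g^{-1}$ at $g(i,0)$ equals the value of $\eta$ at $(i,0)$, multiplied by explicit cocycle factors. These factors are $\beta^{k-n}$ and $\alpha_{\ml}$ on the left, and $\ddel^n$ conjugated past $\alpha_{\mr}$ on the right. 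So $\mathcal{D}(X)\psi$ at $(\tau,z)$ is obtained as follows: pull $\psi$ back by $g$, apply the explicit action of $X$ at $(i,0)$ from \cref{lemma:lie-algebra-action-at-origin-raising-and-lowering-elements}, and push forward by $g^{-1}$.

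Since $G$ acts transitively on $\HJ$, this determines $\mathcal{D}(X)$ completely. The same computation also shows that $\mathcal{D}(X)$ intertwines slash actions with the weight shifted by the $\mathrm{SO}_2$-character of $\tilde X$: namely $+2$ for $\tilde E$, $-2$ for $\tilde F$, $\pm1$ for $\tilde e,\tilde f$, and $0$ for $\tilde H,\tilde Z$.

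\emph{Comparing with \cref{def:raisingandlowering}.} By \cref{cor:covariantoperators1}, the operators $\Rmod,\Lmod,\Rell,\Lell,\Cmod,\Cell$ are covariant with exactly these same weight shifts. Two covariant operators with the same weight shift agree everywhere as soon as they agree at $(i,0)$ on all inputs: given $(\tau,z)$, translate by $g=s(\tau,z)$ and use covariance on both sides.

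It therefore suffices to evaluate each operator of \cref{def:raisingandlowering} at $y=1$, $v=0$ and match it with \cref{lemma:lie-algebra-action-at-origin-raising-and-lowering-elements}. For example:
\begin{itemize}
\item For $\Rmod$, setting $v=0$, $y=1$ kills the $\phi_z$, $v^2$ and $\ddel^{n-1}$ terms. What remains is $(2i\phi_\tau+(k-n)\phi)\ddel^n+\pi\mr n(n-1)\phi\,\ddel^{n-2}$, which is $\phi\ddel^n|\tilde E$.
\item For $\Lmod$ we get $-2i\phi_{\ov\tau}\ddel^n-\pi\mr n(n-1)\phi\,\ddel^{n-2}$, which is $\phi\ddel^n|\tilde F$.
\item $\Rell$ and $\Lell$ match $\tilde e$ and $\tilde f$ in the same way.
\item For $\Cmod$ we get $(k-n)\phi\ddel^n+2\pi\mr n(n-1)\phi\,\ddel^{n-2}$, which is $\phi\ddel^n|\tilde H$.
\item For $\Cell$ we get $2\pi i m\,\phi\ddel^n=-2i\cdot(-\pi m)\phi\ddel^n$, which is $\phi\ddel^n|(-2i\tilde Z)$.
\end{itemize}

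\emph{Main obstacle.} The delicate point is the identification $\mathcal{D}(X)(\psi)(g(i,0))=\big(((\psi|g)|X)|g^{-1}\big)(g(i,0))$ for pseudodifferential operators. We have to check that pointwise evaluation of coefficients commutes with the slash action, because the right cocycle $\alpha_{\mr}$ mixes orders of $\ddel$. The check is to expand using \cref{cor:action-on-pseudo-differential-operators} and to confirm that both sides are the same formal series in $\ddel$ with coefficients evaluated at the point.

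A secondary obstacle is the covariance of $\mathcal{D}(X)$ itself, which comes from $\mathrm{ad}_h$ acting by the $\mathrm{SO}_2$ character on $\tilde X$. Here the difference between $s(h(\tau,z))$ and $h\,s(\tau,z)$ lies in the stabilizer $K$ of $(i,0)$, and this is exactly why the alternate basis, consisting of weight vectors for $K$, is required. If this route proves cumbersome, the fallback is to verify the identities directly on the generators listed in the proof of \cref{cor:covariantoperators1}.
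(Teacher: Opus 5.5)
Your strategy breaks at the covariance step, for the operator you actually set up. You push forward by $g^{-1}$ with the weight-$k$ factors $\beta^{k-n}(g^{-1})$, which is literally what the definition of $\mathcal{D}_{k,\mlr}$ says. With that normalization, $\mathcal{D}(X)$ is not covariant from $|_{k,\mlr}$ to $|_{k+w,\mlr}$, where $w=2,-2,1,-1,0,0$ is the $\td H$-weight of $\td E,\td F,\td e,\td f,\td H,\td Z$.

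For the section $s$, the matrix part of $g^{-1}$ has $c=0$ and $\beta(g^{-1})=y^{-1/2}$. Only the power of $\beta$ depends on the weight, so your weight-$k$ push-forward is $y^{w/2}$ times the weight-$(k+w)$ one. Already for $\phi=\phi(\tau)$, $n=0$, $\ml=\mr=0$:
\begin{itemize}
\item $(\phi|_k g)(\tau')=y^{k/2}\phi(y\tau'+x)$;
\item slashing by $\td E$ at $i$ gives $y^{k/2}(2iy\,\phi_\tau+k\phi)$;
\item pushing forward with $y^{-k/2}$ gives $2iy\,\phi_\tau+k\phi=y\,\Rmod(\phi)$.
\end{itemize}
In general your operators come out as $y\,\Rmod$, $y^{-1}\Lmod$, $y^{1/2}\Rell$, $y^{-1/2}\Lell$. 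They match the targets at $(i,0)$ but not off the line $y=1$.

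Your $\mathrm{Ad}_K$ sketch, carried out, shows exactly this. With $\kappa=s(hq)^{-1}h\,s(q)\in K$ one gets $\mathcal{D}(X)(\psi|_k h)=\chi_X(\kappa)\,\big(\mathcal{D}(X)\psi\big)|_k h$. For $\td E$ and $h$ with lower row $(c,d)$, $\chi_{\td E}(\kappa)=|c\tau+d|^2/(c\tau+d)^2$, which is a unitary twist of $|_k$. By contrast, $|_{k+2}$ is $|_k$ multiplied by $(c\tau+d)^{-2}$. The two differ by the coboundary $|c\tau+d|^2=\Im\tau/\Im(h\tau)$. So your uniqueness lemma cannot be applied to the pair $\mathcal{D}(\td E)$, $\Rmod$: they intertwine different actions.

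The repair is to push forward with the target weight $k+w$. Equivalently, pass to $F_\psi(g)=(\psi|_k g)(i,0)$ on $G$ and let $X$ act by right differentiation. That commutes with left translations. Since $\mathrm{Ad}_\kappa X=\chi_X(\kappa)X$ with $\chi_X(\kappa)=\beta(\kappa,(i,0))^{w}$ for $\kappa\in K$, it carries the $K$-type of $|_k$ to that of $|_{k+w}$. The resulting operator is then honestly $|_k\to|_{k+w}$ covariant and independent of the choice of section. This is the Conley--Raum framework the paper's proof invokes.

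Because $s(i,0)=1_G$, the value at $(i,0)$ does not see the weight. Your six base-point comparisons, which are all correct, are therefore unaffected, and your transitivity argument then completes the proof.

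Once repaired, your route genuinely differs from the paper's. The paper gets agreement up to a scalar from a one-dimensionality statement for covariant operators with given weight shift (\cite{ConleyRaum}, Corollary~5.8), then normalizes at $(i,0)$. You need only transitivity of $G$ on $\HJ$ plus the fact that slashing acts pointwise and invertibly on the coefficient series. That bypasses any classification of covariant operators. Such a classification can in any case hold only in a restricted sense: $(\Rell)^2$ is also covariant with weight shift $2$, yet it involves $\phi_{zz}$ and is not a multiple of $\Rmod$.
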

\begin{proof}
We use results from~\cite{ConleyRaum}
describing covariant operators.
Although the Jacobi Lie algebra $\frakgJ$ is not reductive,
$\HJ\cong G/K$ is a {\em reductive coset space} 
(as described by Helgason~\cite{Helgason1959, Helgason1977})
for (noncompact)
$K$ the stabilizer
in $G$ of $(i,0)$ in $\HJ$
since
$\frakgJ=\frakmJ\oplus\frakkJ$
with $\frakkJ$ the complexified Lie algebra of $K$ and 
$\frakmJ$ a Lie subalgebra invariant under the action
of $\mathrm{Ad}_{\GJ}(K)$:
Here (see~\cite{ConleyRaum},~\cite{BringmannConleyRichter}, and 
~\cite{BerndtSchmidt}),
\begin{gather*}
\begin{aligned}
K&=\big\{(M,0,\kappa):M\in\text{SO}_2(\RR),\kappa\in \RR\big\},\\
\frak k&=\big\{(M,0,\kappa):M\in\frak{so}_2(\CC), \kappa\in \CC\big\}
= \CC\text{-span}\{\td{H}, \td Z \}, \\
\frak m&=\big\{(M,X,0):M\in\text{sl}_2(\CC),M=M^t, 
X\in \CC^2\big\}
= \CC\text{-span}\{\td{E}, \td{F}, \td{e}, \td{f} \}, 
\end{aligned}
\end{gather*}
with
$\tilde{H}$ 
acting diagonally on $\frak m$ with respect to an
eigenbasis 
$\tilde{E}, \tilde{F}, \tilde{e}, \tilde{f}$
with respective $\tilde{H}$-weights $2,-2,1,-1$.
We view $\po$ as 
$C^{\infty}(G)\ot_K V$ 
for ring of Laurent series
$V=\CC(\!(\ddel^{-1})\!)$.

By \cref{cor:covariantoperators1},
$\Rmod$
is a covariant operator
that
raises the weight by $2$,
see \cref{RaisingLowering}.
The operator 
$\mathcal{D}(\tilde E)$
also defines a covariant operator
that raises the weight by $2$
by Corollary 5.8
of~\cite{ConleyRaum},
see the argument in Section~5.4 there.
In other words,
both operators lie in
the vector space of equivariant maps 
$$
\Big(\po,\ |_{k, \mlr}\Big)^G
\longrightarrow 
\Big(\po,\ |_{k+2, \mlr}\Big)^G.
$$
By~\cite[Corollary 5.8]{ConleyRaum},
this
space of covariant operators
is $1$-dimensional
over $\CC$,
and thus $\Rmod$ and
$\mathcal{D}(\tilde E)$
agree
up to a constant.
The operator 
$\mathcal{D}(\tilde E)$
at the point $(\tau,z)=(i,0)$
in $\HJ$
is just
slashing by $\tilde E$
since we take the adjoint action of $g=1_G$ on $\tilde E$.
Upon inspection using 
\cref{lemma:lie-algebra-action-at-origin-raising-and-lowering-elements},
we see that
$(\, \textendash \ | \ \tilde{E})$ and
$\Rmod$ agree
at the point $(i, 0)$, hence this constant is $1$
and $\Rmod = \mathcal{D}(\tilde{E})$. 
Analogous reasoning applies to 
each of the
other indicated operators using
the representation theory
arguments of~\cite{ConleyRaum}
and~\cite{BringmannConleyRichter}.
\end{proof}

\vspace{2ex}

\begin{remark}{\em
In practice, to compute the
covariant operators $\mathcal{D}(X)
=\mathcal{D}_{k, \mlr}(X)$
for Lie algebra elements $X=\td{E},\td{F},\td{H},\td{e},\td{f},\td{Z}$
that give the raising and lowering operators 
$\Rmod$,
$\Lmod$,
$\Cmod$,
$\Lell$,
$\Rell$,
$\Cell$,
we use the section $s:\HJ\rightarrow G$ given by
$$
(\tau, z)\mapsto g
=
  \tfrac{1}{\sqrt{y}} \Big(
  \left(\begin{smallmatrix} 
y\ \ & \ x\\
0\ \ & 1\rule{0ex}{1.5ex} \end{smallmatrix}
\right)
  ,\;
(v, \ \ u ),\ 0
  \Big)
\in G\,
\text{,}
$$
for $\tau=x+iy$, $z=u+iv$,
so that $g(i, 0) = (\tau, z)$.  
We compute 
$
\Big(\big(\phi \delz^n\, \big|\, g\big)\, \big|\, X\Big)(i,0)
$
for fixed $(\tau,z)$
and then find the functions $\phi_j':\HJ \rightarrow \CC$ 
(depending on $X$) for which
$$
\Big(\big(\phi\, \delz^n\, \big|\, g\big)\, \big|\, X\Big)(i,0)=
\big( \sum_j \phi_j' \ \delz^{n-j}\ \big)(i,0).
$$
After
slashing with $g^{-1}$, we may  evaluate
at $(\tau, z)$
to obtain
$\big(\mathcal{D}(X)\phi \, \delz^n\big)(\tau,z)
= 
\big( \sum_j \phi_j' 
\ \delz^{n-j}\, \big| \, g^{-1}\big)(\tau,z)
\, .
$
}
\end{remark}
\vspace{1ex}

\subsection{Casimir element}

Our proof of the existence of a splitting
map $\tree$as in \cref{SplittingExists}
relied upon construction of an
operator on $\po$
that acts by a scalar
on each graded piece.
Hence we look at the center
of the universal enveloping algebra
with an eye to Schur's Lemma.
We explain here how we use the Casimir element
for the extended complex
Jacobi Lie algebra $\frak g$
to find $\mathcal{C}_{k,\mlr}$.

The center, $Z(\mathcal{U}(\frak g))$,
of the universal enveloping algebra
of $\frak g$ is generated
by two elements: $Z$ and 
$$
\Omega :=
ZH^2-3ZH+4ZEF-Hef+2ef+Ef^2-e^2F\, 
$$
(see~\cite{ConleyRaum}).
The element $Z$ 
merely acts by a scalar
multiple of the index
on $\holos$ and $\poh$,
see \cref{la:lie-algebra-action-at-origin}.
Hence
we regard $\Omega$ as 
the {\em Casimir element}
and rewrite it in terms of the alternate basis of 
$\frakgJ$ (see~\cref{AlternateBasis})
using 
a Gr\"obner basis computed
with the help of software extension Plural~\cite{Plural} of Singular~\cite{Singular}:
\begin{gather}
\Omega\ =\ 
   -8 i \td{Z} \td{E} \td{F}
  + 2 i \big( \td{e}^2 \td{F} - \td{E} \td{f}^2 \big)
  + 2 i \td{e} \td{f} ( \td{H} - 2)
  + 6 i \td{Z} \td{H} 
  - 2 i \td{Z} \td{H}^2
\, .
\end{gather}
(Compare to Proposition~2.7 
and the remark before the proof of Proposition~2.6
in~\cite{ConleyRaum}.)

\vspace{1ex}

\subsection{Casimir operator}
As $\po$
is a Lie algebra representation,
we may use
the Poincar\'e-Birkhoff-Witt Theorem to extend the function 
\begin{gather*}
\mathfrak{g}\times 
\po\longrightarrow\po,
\quad
(X,\,  \phi\, \ddel^n)
\mapsto 
\mathcal{D}_{k,\mlr}(X)
(\phi\, \del^n)
\end{gather*}
to a linear map
\begin{gather*}
\mathcal{U}(\frak g)\times \po \longrightarrow \po,
\end{gather*}
i.e., a linear map from 
$\mathcal{U}(\frak g)$
to
the ring of operators on $\po$
under composition.
We just choose an ordering
of the tilde basis of $\frak g$,
say 
$\td Z<\td E<\td e< \td F < \td f< \td H$,
and fix a basis for $\mathcal{U}(\frak g)$ 
of monomials in these elements in this order.
The Casimir element $\Omega$ 
then maps to a {\em Casimir operator}
\begin{gather*}
\mathcal{C}_{k,\mlr}
\ :=\ 
\ \ \po\longrightarrow\po\ 
\end{gather*}
given as the composition of 
order $1$ operators
(applied left to right):
\begin{equation*}
\begin{aligned}
\mathcal{C}_{k, \mlr}
\ =\ 
& - 8 i \,\D(\td{Z})\, \D(\td{E})\, \D(\td{F})
  + 2 i \,\D(\td{e})^2\,\D(\td{F}) 
  - 2 i \,\D(\td{E})\,\D(\td{f})^2 \\
& + 2 i \,\D(\td{e})\,\D(\td{f}) \,(\D(\td{H}) - 2)
  + 6 i \,\D(\td{Z})\,\D(\td{H}) 
  - 2 i \,\D(\td{Z})\,\D(\td{H})^2
  \, ,
\end{aligned}
\end{equation*}
giving \cref{CasimirDefn}
by \cref{AltBasisIsRaisingLowering}.
One may employ an argument
using a vector bundle
over the homogeneous
space $G/K$ for $K$ the stabilizer
in $G$ of $(i,0)$ in $\HJ$
reminiscent of that in~\cite{ConleyRaum,ConleyDahal}
to argue that the
operator $\mathcal{C}=\mathcal{C}_{k, \mlr}$ 
is $G$-equivariant
for all $k$ in $\ZZ$ and $\ml, \mr$  in $\CC$,
see \cref{CasimirEquivariant}.
We have omitted this lengthy argument
since the covariance
of the order $1$ operators
can be checked directly.

\vspace{2ex}

\begin{remark}{\em 
Our Casimir operator
is not the usual Casimir operator
determined by the Casimir element
in historic Lie theory terms.
Traditionally, 
the Casimir operator would be
found using Helgason's theory
of differential operators 
(see~\cite[Theorem~10]{Helgason1959}) 
which gives a {linear}
map from $\mathcal{U}(\mathfrak{g})$ to
the set of
operators on $\po$
using {\em symmetrization} combined with
partial differentiation,
\begin{gather*}
  \phi\, \ddel^n\ \big|\ [Y_1 \cdots Y_l]
\ =\ 
  \tfrac{d}{ds_1}\, \Big|_{s_1 = 0} \cdots \
  \tfrac{d}{ds_l}\, \Big|_{s_l = 0}\ 
  \Big(\phi\, \ddel^n\ \big|\ e^{s_1 Y_1 +\cdots+ s_l Y_l}\Big)
  \, 
\quad\text{ for } Y_i\in\frak{g}\, .
\end{gather*}
We
instead use a {\em composition} of operators of
degree $1$,
in parallel to 
a customary approach
in modular forms.
}\end{remark}

\section{
Noncommutative multiplication
of pseudodifferential operators}
\label{AppendixB}

We include the advertised
closed formula 
in \cref{SlashActionSection}
for recording the noncommutative
multiplication of 
pseudodifferential operators.

\vspace{1ex}

{\bf \cref{triplesum}}.
In the noncommutative ring $\, \po$,
for all $c_1,c_2 \in \CC$ and $n \in \ZZ$,
\begin{gather*}
  (\ddel + c_1 z + c_2)^n
\ =\ \
  \sum_{0\leq p}\
  \sum_{\substack{ 0\, \le\, t, s, r \\ t+s+2r=p\rule{0ex}{1.5ex}}}
  2^r \,
  \sbinom{n}{t, s, 2r, n-p}
\  \sfrac{\Gamma\big(r + \tfrac{1}{2}\big)\rule[-1ex]{0ex}{3ex}}
{\Gamma\big(\tfrac{1}{2}\big)\rule[0ex]{0ex}{2.5ex}}
\ \  c_1^r\, c_2^s\ (c_1 z)^t\ \ddel^{n - p}
\text{.}
\end{gather*}

\vspace{1ex}

\begin{proof}
We first show the formula for $n\geq 0$
via a generating series
in~$n$ using the noncommutative
multiplication in the ring of regular differential
operators 
(see \cref{eq:multiplication-of-pseudo-differential-operators}) to move
all $\ddel$ to the far right:
\begin{gather*}
  (\ddel + c_1 z + c_2)^n                                                                         
\ =
  \sum_{\substack{ 0 \le j, t, s, r,  \rule{0ex}{1.5ex}
\\ j+ t+s + 2 r  = n\rule{0ex}{1.5ex} }}
  \ c_{j, t, s, r} \
  c_1^r\, c_2^s\, (c_1 z)^t\, \ddel^j\ 
\end{gather*}
for some complex scalars $c_{t,j,r,s}$.
We encode these coefficients
in the  generating series
\begin{gather*}
  F(u, v, x, y)\
=
  \sum_{0\leq j, t, s, r} c_{j, t, s, r}\, u^r v^s x^t y^j
\quad\text{with commuting complex variables}\quad u, v, x, y
\, 
\end{gather*}
with an initial condition
$F(0, v, x, y) = (1-v-x-y)^{-1}$,
as
contributions from $r = 0$ 
can be computed as in a
commutative ring.
We may regard $F$ as a formal
expression in noncommutative variables 
so that 
$$
F(c_1,c_2,c_1z,\ddel)
=  \sum_{0\leq n} (\ddel+c_1z+c_2)^n
\, 
$$
and 
expand
\begin{gather*}
F(c_1,c_2,c_1z,\ddel)-1
=
\sum_{0\leq n}(\ddel+c_1z+c_2)^{n+1}
=
(\ddel+c_1z+c_2)\sum_{0\leq n}(\ddel+c_1z+c_2)^{n}\ 
\end{gather*}
as
\begin{align*}
  (\ddel  + c_1 z  + c_2)
  & \sum_{\substack{ 0 \le j, t, s, r\rule{0ex}{2ex} \\ 
j + t+ s+ 2 r = n \rule{0ex}{2ex} }}
  c_{j, t, s, r} \
  c_1^r c_2^s (c_1 z)^t \ddel^j
\\
={}&
  \sum_{\substack{ 0 \le j, t, s, r \rule{0ex}{2ex} \\ 
 j + t+ s+ 2 r = n\rule{0ex}{2ex} }}
  c_{j, t, s, r}\ 
  \rule{0ex}{3ex}
  \big(   c_1^r c_2^s (c_1 z)^t \ddel^{j+1}
        + t c_1^{r+1} c_2^s \, (c_1 z)^{t - 1} \ddel^j
        + c_1^r c_2^s\, (c_1 z)^{t+1} \ddel^j
        + c_1^r c_2^{s+1}\, (c_1 z)^t \ddel^j
  \big)\,\text{.}
\end{align*}
We return to the commutative setting
and use the Poincar\'e-Birkhoft-Witt property to
rewrite this last expression 
as
\begin{gather*}
yF(u,v,x,y)
+xF(u,v,x,y)
+vF(u,v,x,y)
+u\tfrac{\del}{\del x}F(u,v,x,y)
\end{gather*}
formally evaluated at $(u,v,x,y)=(c_1,c_2,c_1z, \ddel)$.
We conclude that
\begin{gather*}
F(u,v,x,y)-1
=\big(y+x+v+u\tfrac{\del}{\del x}\big)\ F(u,v,x,y)\ 
\end{gather*}
and 
$ (1 - x - y - v - u \tfrac{\del}{\del x})\, 
F(u, v, x, y)
=  1$.

To find $F$, 
we find
a power series 
${\td F}(u, w)$
in $u$ with coefficients that are Laurent series in
$w$
solving the differential equation
$  (w + u \partial_{w}) {\td F}
(u, w)
=1$
with initial condition
${\td F}(0, {w}) = {w}^{-1}$.
We then obtain $F$
by setting
$F(u,v,x,y)=
{\td F}(u, 1 - x - y - v)$.
We write
$\td{F}(u,{w})=  
\sum_{0\leq r} h_r({w})\, u^r
$,
and we notice that $h_0({w}) = {w}^{-1}$ and 
${w} h_r({w}) 
+ \partial_{w} h_{r - 1}({w}) = 0$ 
(for $r > 0$).
We conclude that
\begin{gather*}
  h_{r}(w)=
  \ 2^r \ 
\sfrac{\Gamma\big(r + \tfrac{1}{2}\big)}
{\Gamma\big(\tfrac{1}{2}\big)\rule{0ex}{2.5ex}} \
w^{-1-2r}\ .
\end{gather*}
Then since $n = j+t  + s + 2 r$,
\begin{align*}
  F(u, v, x, y)\
&=\
  \sum_{0 \le r}
  \ \ 2^r \ \ 
\sfrac{\Gamma\big(r + \tfrac{1}{2}\big)}
{\Gamma\big(\tfrac{1}{2}\big)\rule{0ex}{2.5ex}} \,
\  \sfrac{u^r}{(1 - x - y - v)^{1 + 2 r}}
\\
&=
  \sum_{0 \le t, j, r, s}
  (-1)^{t + j + s} \ 2^r \
  \sbinom{-1 - 2r}{t, j, s, -1-n}
  \ \sfrac{\Gamma\big(r + \tfrac{1}{2}\big)}{\Gamma\big(\tfrac{1}{2}\big)
\rule{0ex}{2.5ex}} \
  u^r v^s x^t y^j\
\text{.}
\end{align*}
The second equality follows from generalized multinomial
series:
Newton's binomial series
$(1-\td w)^{\alpha} = \sum_{0\le k} \binom{\alpha}{k}\,(-\td w)^k$
holds for $\alpha \in \CC$ and $|\td w|<1$ (see~\cite{Knopp}),
and we set $\td w = x+y+v$ 
to express
$(1-x-y-v)^{-1-2r}$ via the multinomial coefficients
$\sbinom{-1-2r}{t, j, s, -1-n}$,
see \cref{multinomial}
and~\cite[(26.4.2)]{DLMF}.
Euler's reflection formula for the Gamma function implies that
\begin{gather*}
  (-1)^{t + j + s} \ \sbinom{-1 - 2r}{t, j, s, -1-n}
=
  \sbinom{n}{t, j, s, 2r}
\ ,
\end{gather*}
and hence in the ring of ordinary
differential operators
\begin{gather*}
  (\ddel + c_1 z + c_2)^n
\ =\ \
  \sum_{0\leq p\leq n}\
  \sum_{\substack{ 0\, \le\, t, r, s \\ t+s+2r=p\rule{0ex}{1.5ex}}}
  2^r \,
  \sbinom{n}{t, s, 2r, n-p}
\  \sfrac{\Gamma\big(r + \tfrac{1}{2}\big)\rule[-1ex]{0ex}{3ex}}
{\Gamma\big(\tfrac{1}{2}\big)\rule[0ex]{0ex}{2.5ex}}
\ \  c_1^r\, c_2^s\ (c_1 z)^t\ \ddel^{n - p}
\text{.}
\end{gather*}

To obtain a description of
$(\ddel + c_1 z + c_2)^n$
when $n<0$,
we observe that
the coefficients 
in a similar formula must be polynomials in~$n$,
which follows by recursion after expanding~$(\ddel + c_1 z + c_2)^n (\ddel + c_1 z + c_2)^{-n} = 1$.
We may obtain this
formula for $n<0$ by
extending the last sum
given for $n\geq 0$
to a sum over all $0\leq p$
noting that the multinomial coefficient
is zero for $p> n$,
and then, 
for each fixed $p$,
extending the coefficients
on the right hand side as polynomials in $n\geq 0$ to all $n$ in $\ZZ$.

\end{proof}


\vspace{3ex}

\bibliographystyle{amsalpha}

\begin{thebibliography}{999}


\bibitem{AthreyaLagaceMollerRaum}
J.\ Athreya,
J. Lagacé, M.\ Möller, and M.\ Raum, 
``Spectral decomposition and Siegel-Veech transforms for strata: the case of marked tori'', J.\ Spectr.\ Theory 15(2) (2025), 895--959.


\bibitem{AtiyahMacdonald}
M.\ F.\ Atiyah and I.\ G.\ Macdonald,
{\em Introduction to Commutative Algebra.} 
Addison-Wesley, Reading, Massachusetts, 1969. 

\bibitem{BerndtSchmidt}
R.\ Berndt and R.\ Schmidt,
{\em Elements of the representation theory of the Jacobi group}
[2011 reprint of the 1998 original]. 
Birkh\"auser/Springer, Basel AG, Basel, 1998. 

\bibitem{BieliavskyTangYao}
 P.\ Bieliavsky, X.\ Tang, and Y.\ Yao,
 ``Rankin-Cohen brackets and formal quantization'',
 Adv.\ Math.\  212 (2007), 293--314.

\bibitem{BlochOkounkov} 
S.\ Bloch and A.\ Okounkov,
``The character of the infinite wedge representation'', 
Adv.\ Math.\ 149 (2000), 1--60.


\bibitem{Boecherer} 
S.\ B{\"o}cherer,
``Bilinear Differential Operators for the Jacobi Group'',
Comm.\ Math.\ Univ.\ St.\ Pauli 47 (1998), 135--154.

\bibitem{BringmannConleyRichter}     
K.\ Bringmann, C.\ Conley, 
and O.\ Richter,
``Maass-Jacobi forms over complex quadratic fields'',
Math.\ Res.\ Lett.\ (2007) 14(1),
137--156. 

\bibitem{BringmannRaumRichter}     
K.\ Bringmann, M.\ Raum, 
and O.\ Richter,
``Harmonic Maass-Jacobi forms with singularities 
and a theta-like decomposition'', 
Trans.\ Amer.\ Math.\ Soc.\ 367(9) (2015), 6647--6670.


\bibitem{Choie} 
Y.\ Choie, 
``Jacobi forms and the heat operator'', 
Math.\ Z.\ 225 (1997), 95--101.



\bibitem{ChoieDumasMartinRoyer2017}
Y.\ Choie, F.\ Dumas, F.\ Martin, and E.\ Royer,
``Rankin-Cohen deformations of the algebra of Jacobi forms'' 
(hal-01673663v1), 2017.


\bibitem{ChoieDumasMartinRoyer}
Y.\ Choie, F.\ Dumas, F.\ Martin, and E.\ Royer,
``Formal deformations of the algebra of Jacobi forms and
Rankin-Cohen brackets'',
Comptes Rendus.\ Math\'ematique 359(4) (2021), 505--521.




\bibitem{ChoieEholzer} 
Y.\ Choie and W.\ Eholzer,
``Rankin-Cohen operators for Jacobi and Siegel
forms'',
 J.\ Number Theory
68(2) (1998), 160--177.
 

\bibitem{ChoieLee}
Y.\ Choie and M.\ H.\ Lee,
``Symmetric tensor representations, quasimodular forms, and weak 
Jacobi forms'',
Adv.\ Math.\ 287(10) (2016), 
567--599.

\bibitem{ChoieLeeBook}
Y.\ Choie and M.\ H.\ Lee,
``Jacobi-like Forms and Pseudodifferential Operators''.
In
{\em Jacobi-like forms, pseudodifferential operators, and quasimodular forms.}
Springer Monographs in Mathematics. 
Springer Monogr.\ Math.,
Springer, Cham.,
2019.

\bibitem{ChoieParkZagier}
Y.\ Choie, Y.\ Park, D.\ Zagier,  
``Periods of modular forms on $\Gamma_0(N)$ and products of Jacobi theta functions'',
 J.\ Eur.\ Math.\ Soc.\ 21 (2017),
 95--101.

 
\bibitem{CMZ} 
P.\ Beazley Cohen, Y.\ Manin, 
and D.\ Zagier,
``Automorphic pseudodifferential operators''. In
{\em Algebraic aspects of integrable systems}, 
Progr.\ Nonlinear Differential Equations Appl. 26, 
Birkhäuser Boston, Boston, MA, 1997,
17--47.

\bibitem{ConleyDahal}
C.\ Conley and R.\ Dahal, 
``Centers and characters of Jacobi group-invariant differential operator algebras'',
J.\ Number Theory 148 (2015), 40--61.

\bibitem{ConleyRaum}
C.\ Conley and M.\ Raum,
``Harmonic Ma\ss-Jacobi forms of degree 1 with higher rank indices'',
Int.\ J.\ Number Theory
12(07) (2016), 
1871--1897.

\bibitem{ConnesMoscovici}
A.\ Connes and H.\ Moscovici
``Rankin-Cohen brackets and the Hopf algebra of transverse geometry'',
Mosc.\ Math.\ J.\ 
4(1) (2004),
111--130.

\bibitem{Dijkgraaf} 
R.\ Dijkgraaf,
``Mirror symmetry and elliptic curves''.
In {\em The moduli space of curves},
Progress in Mathematics 129, 
Birkh\"auser, 1995, 149--163.





\bibitem{DLMF}
F. W. J. Olver, A. B. Olde Daalhuis, D. W. Lozier, B. I. Schneider, R. F. Boisvert, C. W. Clark, B. R. Miller, B. V. Saunders, H. S. Cohl, and M. A. McClain.
NIST Digital Library of Mathematical Functions. Release 1.2.6.
\url{https://dlmf.nist.gov/}.

\bibitem{DumasMartin}
F.\ Dumas and F.\ Martin, 
``Invariants of formal pseudodifferential operator algebras and algebraic modular forms'',
Revista de la Unión Matemática Argentina 65 (2023), 1--31.

\bibitem{DumasRoyer}
F.\ Dumas and E.\ Royer,
``Poisson structures and star products on quasimodular forms'',
Algebra Number Theory 8(5) (2014), 
1127--1149.


\bibitem{EichlerZagier}
M.\ Eichler and D.\ Zagier,
{\em
 The theory of Jacobi forms}, Progr.\ Math.\ 55, Birkhäuser, Boston and Basel, 1985.



\bibitem{Ittersum}
J.-W.\ M.\ van Ittersum,
``The Bloch--Okounkov theorem for
congruence subgroups and Taylor
coefficients of quasi-Jacobi 
forms'',
Res.\ Math.\ Sci.\ 10(5) (2023),
45 pp. 

 \bibitem{Helgason1959} 
S.\ Helgason, 
``Differential Operators on Homogeneous Spaces'', 
Acta.\ Math.\ 102 (1959), 
239--299.

\bibitem{Helgason1977} 
S.\ Helgason,
``Invariant differential equations on homogeneous manifolds'',
Bull. Amer. Math. Soc. 83(5) (1977), 
751--774.


\bibitem{KanekoZagier95}
M.\ Kaneko and D.\ Zagier, 
``A generalized Jacobi theta function and quasimodular forms''.
In {\em 
The moduli space of curves (Texel Island, 1994)}, 
Progress in Mathematics 129, Birkhäuser Boston, Boston, MA, 1995,
165--172.


\bibitem{Knopp}
K.\ Knopp,
{\em Theory and Application of Infinite Series},
2nd English ed., Blackie \& Son, London and Glasgow, 1951.

\bibitem{Kontsevich}
M.\ Kontsevich,
``Deformation Quantization of Poisson Manifolds'',
Letters in Mathematical Physics 66 (2003), 157--216.

\bibitem{MinHoLee} M.\ H.\ Lee,
``Casimir Operators on Pseudodifferential Operators of Several Variables'',
J.\ Lie Theory 12 (2002),
483--493.

\bibitem{OnoUnearthing} 
K.\ Ono, 
``Unearthing the visions of a master: harmonic Maass forms and number theory'',
Curr.\ Dev.\ Math.\ Sci.\ C
2008 (2008), 347--455.


\bibitem{Ovsienko} V.\ Ovsienko,
``Exotic Deformation Quantization'',
J.\ Differential Geometry, 45(2) (1997), 390--406.


\bibitem{Plural}
V.\ Levandovskyy, 
``Plural, a Non-commutative Extension of Singular: Past, Present and Future''. In  Mathematical Software - ICMS 2006, A.\ Iglesias and N.\ Takayama (Eds.), Vol.\ 4151, 
 Springer, Berlin, Heidelberg, 2006, 129--140.

\bibitem{Pitale} A.\ Pitale,
``Jacobi Maass Forms'', 
Abh.\ Math.\ Sem.\ Univ.\ 
Hamburg 79 (2009), 87--111.

\bibitem{ramanujan-1988}
S.\ Ramanujan,
{\em The lost notebook and other unpublished papers},
with an introduction by George E.\ Andrews,
Springer, Berlin, 1988.
 
\bibitem{Raum2015}
M.\ Westerholt-Raum, ``Harmonic Maaß-Jacobi forms of degree 1'', Mathematical Sciences 2, 12 (2015). 

\bibitem{Singular}
Decker, W.; Greuel, G.-M.; Pfister, G.; Sch{\"o}nemann, H.: 
\newblock {\sc Singular} {4-4-0} --- {A} computer algebra system for polynomial computations.
\newblock {https://www.singular.uni-kl.de} (2024).


\bibitem{Weibel}
C.\ Weibel, 
{\em An Introduction to Homological Algebra}. Cambridge Studies in Advanced Mathematics, Cambridge University Press, Cambridge, 1994.

\bibitem{Zagier91} D.\ Zagier, 
``Periods of modular forms and Jacobi theta functions'',
Invent.\ math.\ 104 (1991), 449--465.

\bibitem{Indian} 
D.\ Zagier, 
``Modular forms and differential operators'',
Proc.\ Indian Acad.\ Sci.\ (Math.\ Sci.) 104(1) (1994), 57--75.

\bibitem{Zagier16}
D.\ Zagier,
``Partitions, quasimodular forms, and the Bloch--Okounkov theorem'',
Ramanujan J.\ 41(1--3) (2016),
345--368.



\bibitem{Zwegers} 
S.\ Zwegers, ``Mock Theta Functions,'' Ph.D.\ Thesis (Advisor D.\ Zagier), Universiteit Utrecht, 2002.

\end{thebibliography}


\end{document}